\newcommand{\C}{\mathbb{C}}
\newcommand{\D}{\mathbb{D}}
\newcommand{\N}{\mathbb{N}}
\newcommand{\T}{\mathbb{T}}
\newcommand{\Z}{\mathbb{Z}}
\newcommand{\p}{\mathbf{p}}
\newcommand{\mcF}{\mathcal{F}}
\newcommand{\barp}{\overline{p}}
\theoremstyle{plain}
\newtheorem{theorem}{Theorem}
\newtheorem{lemma}[theorem]{Lemma}
\newtheorem{proposition}[theorem]{Proposition}
\newtheorem{corollary}[theorem]{Corollary}
\theoremstyle{remark}
\newtheorem{remark}{\bf Remark}
\newtheorem{exmp}[remark]{\bf Example}
\title[\null]{Closed-form expression for finite predictor coefficients of multivariate ARMA processes}
\author[\null]{Akihiko Inoue}
\address{A.\ Inoue\\
Department of Mathematics\\
Hiroshima University\\
Higashi-Hiroshima 739-8526\\
Japan}
\email{inoue100@hiroshima-u.ac.jp}
\begin{document}

\subjclass[2010]{Primary 60G25; secondary 60G10, 65F05.}

\keywords{finite predictor coefficients; multivariate ARMA processes; closed-form expression; linear-time algorithm}
%%%%%%%%%%%%%%%%%%%%%%%%%%%%%%%%%%%%%%%%%%%%%%%%%%%%%%%%%%%%
%\journal{Nuclear Physics B}
%\journal{Journal of Multivariate Analysis}

\begin{abstract}
We derive a closed-form expression for the finite predictor coefficients of multivariate ARMA (autoregressive moving-average) processes. The expression is given in terms of several explicit matrices that are of fixed sizes independent of the number of observations. The significance of the expression is that it provides us with a linear-time algorithm to compute the finite predictor coefficients. In the proof of the expression, a correspondence result between two relevant matrix-valued outer functions plays a key role. We apply the expression to determine the asymptotic behavior of a sum that appears in the autoregressive model fitting and the autoregressive sieve bootstrap. The results are new even for univariate ARMA processes.
\end{abstract}

\maketitle

%%%%%%%%%%%%%%%%%%%  Section 1 %%%%%%%%%%%%%%%%%%%%%%%%%%%%%%%%%%%%%%%%%%%%
\section{Introduction}\label{sec:1}

Let $\T:=\{z\in\C :\vert z\vert=1\}$ and $\overline{\D}:=\{z\in\C :\vert z\vert\le 1\}$ be 
the unit circle and the closed unit disk, in $\C$, respectively. 
For $d\in\N$, a $d$-variate ARMA (autoregressive moving-average) process 
$\{X_k : k\in\Z\}$ is a $\C^d$-valued, centered, weakly stationary process with spectral density 
$w$ of the form
\begin{equation}
w(e^{i\theta})=h(e^{i\theta})h(e^{i\theta})^*,\qquad \theta\in [-\pi,\pi),
\label{eq:farima529}
\end{equation}
where $h:\T\to \C^{d\times d}$ satisfies the following condition:
\begin{equation}
\mbox{the entries of $h(z)$ are rational functions in $z$ that have 
no poles in $\overline{\D}$, and $\det h(z)$ has no zeros in $\overline{\D}$.}
\label{eq:C}
\end{equation}
The finite predictor 
coefficients $\phi_{n,j}\in\C^{d\times d}$, $j \in \{1,\dots,n\}$, 
of $\{X_k\}$ are defined by
\begin{equation}
P_{[-n,-1]}X_0=\phi_{n,1}X_{-1}+\cdots+\phi_{n,n}X_{-n},
\label{eq:forward-phi123}
\end{equation}
where, for $n\in\N$, $P_{[-n,-1]}X_{0}$ stands for the best 
linear predictor of the future 
value $X_{0}$ based on the finite past $\{X_{-n},\dots,X_{-1}\}$ 
(see Section \ref{sec:2} for the precise definition). 
The finite predictor coefficients $\phi_{n,j}$ are among the most basic quantities in the 
prediction theory for $\{X_k\}$.

The main aim of this paper is to derive a closed-form expression for the 
finite predictor coefficients $\phi_{n,j}$ of a 
multivariate ARMA process. More precisely, 
in the main result of this paper, i.e., Theorem \ref{thm:phirep116} below, we show that 
the finite predictor coefficients $\phi_{n,j}$ can be expressed in terms of 
several explicit matrices to be introduced in Section \ref{sec:4}, 
which are of fixed sizes independent of $n$, 
unlike, e.g., the matrices that appear in the Yule--Walker equations for $\phi_{n,j}$. 
See Example \ref{exmp:simple333} below that illustrates this point. 
The significance of the closed-form expression for $\phi_{n,j}$ 
is that it provides us with a linear-time algorithm to compute $\phi_{n,1},\dots,\phi_{n,n}$ 
(see Remark \ref{rem:O(n)333} below).

The closed-form expression for $\phi_{n,j}$ also provides us with a powerful tool to study problems 
concerning the asymptotic behavior of $\phi_{n,j}$. 
Among such problems, we show a result on the asymptotic behavior of the sum 
$\sum_{j=1}^n \Vert\phi_{n,j} - \phi_j\Vert$ as $n \to \infty$, where 
$\phi_j$ are the infinite predictor coefficients; see (\ref{eq:ipc574}) below. 
This sum appears, for example, in proving the consistency of the autoregressive
model fitting process and the corresponding autoregressive spectral density estimator (see Berk \cite{Ber}), 
and in proving the validity of autoregressive sieve bootstrap 
(see, e.g., B\"uhlmann \cite{Bu} and Kreiss et al.\ \cite{KPP}). 
Because of difficulties in finding the asymptotic behavior of 
$\sum_{j=1}^n \Vert\phi_{n,j} - \phi_j\Vert$ itself, 
Baxter's inequality 
\[
\sum_{j=1}^{n}\Vert \phi_{n,j} - \phi_j \Vert
\le K\sum_{j=n+1}^{\infty}\Vert \phi_j\Vert, \qquad K\in (0,\infty),
\]
in \cite{Bax} has been used instead. 
Under a mild condition on the multivariate ARMA process, 
the closed-form expression for $\phi_{n,j}$ now enables us 
to determine the precise asymptotic behavior 
of $\sum_{j=1}^n \Vert\phi_{n,j} - \phi_j\Vert$ as $n \to \infty$ (see Theorem \ref{thm:Bax323} below). 
It turns out that 
Baxter's inequality gives an asymptotically optimal 
bound of $\sum_{j=1}^n \Vert \phi_{n,j} - \phi_j \Vert$ in the sense that
\[
\lim_{n\to\infty} \frac{\sum_{j=1}^{n}\Vert \phi_{n,j} - \phi_j \Vert}{\sum_{j=n+1}^{\infty}\Vert \phi_j\Vert} 
\in (0,\infty)
\]
holds (see Corollary \ref{cor:appl6475} below).

The proof of the closed-form expression for $\phi_{n,j}$ is long. 
One important ingredient of the proof is the explicit representation of 
$\phi_{n,j}$ (see the proof of Theorem \ref{thm:phirep116} in \ref{sec:Append-D} below), 
which was obtained recently in 
Inoue et al.\ \cite{IKP2}, extending the earlier univariate result in Inoue and Kasahara \cite{IK06}; 
see also Inoue et al.\ \cite{IKP1} and Inoue and Kasahara \cite{IK18} for related work. 
To explain another important ingredient of the proof of 
the closed-form expression for $\phi_{n,j}$, we recall that, for $h:\T\to \C^{d\times d}$ 
satisfying (\ref{eq:farima529}) and (\ref{eq:C}), 
there exists $h_{\sharp}:\T\to \C^{d\times d}$ that satisfies (\ref{eq:C}) and
\begin{equation}
w(e^{i\theta})=h(e^{i\theta})h(e^{i\theta})^*=h_{\sharp}(e^{i\theta})^*h_{\sharp}(e^{i\theta}),
\qquad \theta\in [-\pi,\pi),
\label{eq:rational333}
\end{equation}
and that $h_{\sharp}$ is unique up to a constant unitary factor (see, e.g., \cite{IKP2}). 
We may take $h_{\sharp}=h$ for the univariate case $d=1$ but not so for $d\ge 2$. 
We show, in Theorem \ref{thm:hsharpinv123} below, that $h_{\sharp}^{-1}$ has the same poles 
with the same multiplicities as $h^{-1}$. This is a key finding in deriving 
the closed-form expression for $\phi_{n,j}$ when $d\ge 2$. 
We remark, however, that the closed-form expression for $\phi_{n,j}$ itself, 
i.e., Theorem \ref{thm:phirep116} below, 
is new even for univariate ($d=1$) ARMA processes.

We explain the difference between 
the explicit representation of $\phi_{n,j}$ in \cite{IKP2}, i.e., 
Theorem 5.4 in \cite{IKP2}, and the closed-form expression of $\phi_{n,j}$ in this paper. 
The representation in \cite{IKP2} holds both 
for long and short memory processes, and has several applications such as the proof of 
Baxter's inequality for multivariate long-memory processes in \cite{IKP2}. 
The representation of $\phi_{n,j}$ in \cite{IKP2} 
is, however, not a closed-form expression since it involves infinite series. 
In this paper, for multivariate ARMA processes, we transform the representation in \cite{IKP2} to 
a closed-form expression for $\phi_{n,j}$. 
The advantage of the latter is clear from the fact that it can be viewed as a linear-time algorithm to compute 
$\phi_{n,1},\dots,\phi_{n,n}$, as stated above.

This paper is organized as follows.
In Section \ref{sec:2}, we give preliminary definitions and basic facts.
In Section \ref{sec:3}, we prove the correspondence between the poles of $h^{-1}$ and $h_{\sharp}^{-1}$. 
In Section \ref{sec:4}, we introduce several matrices which are to become building blocks for 
the closed-form expression of $\phi_{n, j}$. 
In Section \ref{sec:5}, we present the main result, i.e., the closed-form expression for $\phi_{n, j}$. 
In Section \ref{sec:6}, we apply the closed-form expression for $\phi_{n, j}$ to 
derive the asymptotic behavior of $\sum_{j=1}^n \Vert\phi_{n,j} - \phi_j\Vert$ as $n \to \infty$. 
Finally, the Appendix contains the omitted proofs.

%%%%%%%%%%%%%%%%%%%%% Section 2 %%%%%%%%%%%%%%%%%%%%%%%%%%%%%%%%%%%%%%%

\section{Preliminaries}\label{sec:2}

Let $\D:=\{z\in\C :\vert z\vert<1\}$ denote the open unit disk in $\C$. 
Let $\C^{m\times n}$ be the set of all complex $m\times n$ matrices;
we write $\C^d$ for $\C^{d\times 1}$. 
We write $I_n$ for the $n\times n$ unit matrix.
For $a\in \C^{m\times n}$, $a^{\top}$ denotes the transpose of $a$, and
$\bar{a}$ and
$a^*$ the complex and Hermitian conjugates of $a$, respectively;
thus, in particular, $a^*:=\bar{a}^{\top}$. 
For $a\in\C^{d\times d}$, we write $\Vert a\Vert$ for the norm 
$\Vert a\Vert:=\sup_{u\in \C^d, \vert u\vert \le 1}\vert au\vert$, where 
$\vert u\vert:=(\sum_{i=1}^d\vert u^i\vert^2)^{1/2}$ denotes the Euclidean norm of
$u=(u^1,\dots,u^d)^{\top}\in \C^d$. 
We denote by $\ell_{2+}^{d\times d}$ the space of
$\C^{d\times d}$-valued sequences
$\{a_k\}_{k=0}^{\infty}$ such that $\sum_{k=0}^{\infty} \Vert a_k\Vert^2<\infty$. 
For $r\in [1,\infty)$, we write $L_r(\T)$ for the Lebesgue space of measurable functions $f:\T\to\C$
such that $\Vert f\Vert_r<\infty$, where
$\Vert f\Vert_r:=\{\int_{-\pi}^{\pi}\vert f(e^{i\theta})\vert^r d\theta/(2\pi)\}^{1/r}$.
Let $L_r^{m\times n}(\T)$ be the space of $\C^{m\times n}$-valued functions on
$\T$ whose entries belong to $L_r(\T)$.

For $d\in\N$,
let $\{X_k\}=\{X_k:k\in\Z\}$ be a $\C^d$-valued, centered,
weakly stationary process, defined on a probability space $(\Omega, \mcF, P)$, which
we shall simply call a $d$-variate stationary process. 
If there exists a positive $d\times d$ Hermitian matrix-valued function $w$ on $\T$, satisfying
$w\in L^{d\times d}_1(\T)$ and 
$E[X_m X_n^*] = \int_{-\pi}^{\pi}e^{-i(m-n)\theta}w(e^{i\theta})d\theta/(2\pi)$, 
$n,m\in\Z$, 
then we call $w$ the spectral density of $\{X_k\}$. 
Here and throughout this paper, we assume that 
$\{X_k\}$ is a $d$-variate ARMA process in the sense that $\{X_k\}$ satisfies 
the following condition:
\begin{equation}
\mbox{$\{X_k\}$ is a $d$-variate stationary process that has spectral density $w$ satisfying (\ref{eq:farima529}) with (\ref{eq:C}).}
\label{eq:ARMA}
\end{equation}

\begin{remark}
Suppose that $\{X_k\}$ is a $d$-variate, causal and invertible ARMA process in the sense 
of \cite{BD}, that is, a $\C^d$-valued, centered, weakly stationary process described by 
the ARMA equation
\[
\Phi(B) X_n = \Psi(B) Z_n,\qquad n\in\Z,
\]
where, for $r, s\in\N\cup\{0\}$ and $\Phi_i, \Psi_j\in\C^{d\times d}$, $i \in \{1,\dots,r\}$, $j \in \{1,\dots,s\}$, 
\[
\Phi(z) = I_d - z\Phi_1 - \cdots - z^r\Phi_r, 
\qquad 
\Psi(z) = I_d - z\Psi_1 - \cdots - z^s\Psi_s
\]
are $\C^{d\times d}$-valued polynomials satisfying 
$\det \Phi(z)\neq 0$ and $\det \Psi(z)\neq 0$ on $\overline{\D}$, 
$B$ is the backward shift operator defined by $B X_m=X_{m-1}$, 
and $\{Z_k : k\in\Z\}$ is a $d$-variate white noise, that is, a $d$-variate, centered process 
such that $E[Z_nZ_m^*]=\delta_{nm}\Sigma$ for some positive-definite $\Sigma\in\C^{d\times d}$. 
Then, $\{X_k\}$ is a $d$-variate ARMA process satisfying (\ref{eq:farima529}) with (\ref{eq:C}) for 
$h(z) = \Phi(z)^{-1} \Psi(z) \Sigma^{1/2}$. 
Conversely, we can show that any $d$-variate ARMA process $\{X_k\}$ satisfying (\ref{eq:farima529}) with (\ref{eq:C}) is described 
by the above type of ARMA equation.
\end{remark}

Write $X_k=(X^1_k,\dots,X^d_k)^{\top}$, and
let $V$ be the complex Hilbert space
spanned by all the entries $\{X^j_k: k\in\Z,\ j \in \{1,\dots,d\}\}$ in $L^2(\Omega, \mcF, P)$,
which has inner product $(x, y)_{V}:=E[x\overline{y}]$ and
norm $\Vert x\Vert_{V}:=(x,x)_{V}^{1/2}$.
For $J\subset \Z$ such as $\{n\}$,
$(-\infty,n]:=\{n,n-1,\dots\}$, $[n,\infty):=\{n,n+1,\dots\}$,
and $[m,n]:=\{m,\dots,n\}$ with $m\le n$, 
we write $V_J^X$ for the closed linear span of 
$\{X^j_k: j \in \{1,\dots,d\},\ k\in J\}$ in $V$. 
Let $(V_J^X)^{\bot}$ be the orthogonal complement of $V_J^X$ in $V$, and 
let $P_J$ and $P_J^{\perp}$ be the orthogonal projection operators of $V$ onto
$V_J^X$ and $(V_J^X)^{\perp}$, respectively.

Let $V^d$ be the space of $\C^d$-valued random variables on
$(\Omega, \mcF, P)$ whose entries belong to $V$.
The norm $\Vert x\Vert_{V^d}$ of $x=(x^1,\dots,x^d)^{\top}\in V^d$ is given by
$\Vert x\Vert_{V^d}:=(\sum_{i=1}^d \Vert x^i\Vert_V^2)^{1/2}$.
For $J\subset\mathbb{Z}$ and
$x=(x^1,\dots,x^d)^{\top}\in V^d$, we write $P_Jx$ for $(P_Jx^1, \dots, P_Jx^d)^{\top}$.
We define $P_J^{\perp}x$ in a similar way. 
For $n\in\mathbb{N}$ and $j \in \{1,\dots,n\}$,
the finite predictor coefficients $\phi_{n,j}\in \C^{d\times d}$ of $\{X_k\}$ are defined by 
(\ref{eq:forward-phi123}). 
For $x=(x^1,\dots,x^d)^{\top}$ and $y=(y^1,\dots,y^d)^{\top}$ in $V^d$, 
$\langle x,y\rangle:=E[xy^*]=
(
(x^i, y^j)_V
)_{1\le i, j\le d}
\in \C^{d\times d}$ 
stands for the Gram matrix of $x$ and $y$.

For $K\in\N$, let $p_1,\dots,p_K$ be distinct points in $\D\setminus\{0\}$. 
For $\mu \in \{1,\dots,K\}$ and $i\in\N$, we define $p_{{\mu}, i}: \N\cup\{0\} \to \C$ by
\begin{equation}
p_{{\mu},i}(k):=
\binom{k}{i-1} p_{\mu}^{k - i+1}, \qquad k\in\N\cup\{0\}.
\label{eq:p-func333}
\end{equation}
Notice that $p_{\mu,i}(0)=\binom{0}{i-1}p_{\mu}^{-i+1}=\delta_{i, 1}$. 
Take $m_{\mu}\in\N$ for $\mu \in \{1,\dots, K\}$ and let
\begin{equation}
M:=\sum_{\mu=1}^K m_{\mu}.
\label{eq:sum-mmu123}
\end{equation}

The next proposition will be used in Section \ref{sec:3} and \ref{sec:Append-B}.

\begin{proposition}\label{eq:plind123}
For $N\in\N\cup\{0\}$, 
the $M$ vectors $p_{{\mu},i}\in \C^{1\times M}$, ${\mu} \in \{1,\dots,K\}$, $i \in \{1,\dots,m_{{\mu}}\}$, defined by
\[
p_{{\mu},i}=(p_{{\mu},i}(N), p_{{\mu},i}(N+1), \dots, p_{{\mu},i}(N+M-1))
\]
are linearly independent.
\end{proposition}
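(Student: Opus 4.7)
The plan is to interpret the $M$ sequences $p_{\mu,i}$ as a basis for the solution space of a linear recurrence of order $M$, and then to observe that ``evaluation at $M$ consecutive integers'' gives a linear isomorphism from that solution space to $\C^{1\times M}$. This will reduce linear independence of the $M$ row vectors in the statement to linear independence of the corresponding sequences as complex-valued functions on $\N\cup\{0\}$.

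First I would show that the $M$ sequences $p_{\mu,i}:\N\cup\{0\}\to\C$ are linearly independent as sequences. Rewriting $p_{\mu,i}(k)=p_\mu^{-(i-1)}\binom{k}{i-1}p_\mu^k$ and noting that, for each fixed $\mu$, the polynomials $\{\binom{k}{i-1}:i=1,\dots,m_\mu\}$ form a basis of the space of complex polynomials in $k$ of degree less than $m_\mu$, this reduces to the classical fact that the polynomial-exponential sequences $\{k^j p_\mu^k:\mu=1,\dots,K,\ j=0,\dots,m_\mu-1\}$ are linearly independent over $\C$ whenever the $p_\mu$ are distinct and nonzero.

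Next, set $P(z):=\prod_{\mu=1}^K(z-p_\mu)^{m_\mu}$, a polynomial of degree $M$ whose constant term $\prod_\mu(-p_\mu)^{m_\mu}$ is nonzero because each $p_\mu\neq 0$. Writing $E$ for the forward shift $(Es)(k)=s(k+1)$, each $p_{\mu,i}$ solves $P(E)s=0$, so the $M$ linearly independent sequences $\{p_{\mu,i}\}$ form a basis of the $M$-dimensional solution space $\mathcal{S}$ of this recurrence. Because the constant term of $P$ is nonzero, the recurrence can be solved either for $s(k+M)$ in terms of $s(k),\dots,s(k+M-1)$ or for $s(k)$ in terms of $s(k+1),\dots,s(k+M)$; hence the $M$ consecutive values $s(N),\dots,s(N+M-1)$ determine $s\in\mathcal{S}$ uniquely, and the evaluation map $\mathcal{S}\to\C^{1\times M}$, $s\mapsto(s(N),\dots,s(N+M-1))$, is an injective linear map between $M$-dimensional spaces, hence an isomorphism. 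Applying it to the basis $\{p_{\mu,i}\}$ of $\mathcal{S}$ produces $M$ linearly independent vectors in $\C^{1\times M}$, which is precisely the assertion.

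The main obstacle, such as it is, lies in the first step, namely the linear independence of the polynomial-exponential sequences as infinite sequences; this is classical, but it is the only point where the distinctness of the $p_\mu$ is really used. The recurrence argument that follows handles the arbitrary starting index $N$ uniformly, and crucially it requires the hypothesis $p_\mu\in\D\setminus\{0\}$ in order to make the recurrence invertible backwards as well as forwards.
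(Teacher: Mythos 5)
Your argument is correct, but it is organized differently from the paper's. The paper proves the statement by a direct annihilation argument inside the window of $M$ consecutive indices: it first records the identity $D_{\nu}p_{\mu,i}(k)=(p_{\mu}-p_{\nu})p_{\mu,i}(k)+p_{\mu,i-1}(k)$ for the first-order difference operators $D_{\nu}f(k)=f(k+1)-p_{\nu}f(k)$, and then applies the operator $D_1^{m_1-1}D_2^{m_2}\cdots D_K^{m_K}$ (of total order $M-1$, so it uses only the values at $N,\dots,N+M-1$) to a vanishing linear combination, which isolates $\gamma_{1,m_1}p_1^N\prod_{\mu\ge 2}(p_1-p_{\mu})^{m_{\mu}}$ and forces the coefficients to vanish one by one. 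You instead split the statement into (i) the classical linear independence of the exponential--polynomial sequences $k^jp_{\mu}^k$ on $\N\cup\{0\}$, and (ii) the observation that all $p_{\mu,i}$ solve $P(E)s=0$ with $P(z)=\prod_{\mu}(z-p_{\mu})^{m_{\mu}}$, whose nonzero constant term makes the recurrence solvable both forwards and backwards, so that restriction to any window of $M$ consecutive indices is an isomorphism from the $M$-dimensional solution space onto $\C^{1\times M}$. Both proofs are sound and both use exactly the hypotheses that the $p_{\mu}$ are distinct (for independence) and nonzero (in your case for backward solvability, in the paper's for the factor $p_1^N$). The paper's proof has the advantage of being entirely self-contained and finite, working only with the given $M$ values; yours is softer and more conceptual, trading the explicit elimination for a standard fact about exponential--polynomial sequences plus a dimension count, and it makes transparent why an arbitrary starting index $N$ causes no loss. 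If you wanted to make your version fully self-contained you would still need to justify step (i), e.g.\ by the very same difference-operator computation the paper uses, or by a Casoratian/generating-function argument.
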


%%%%%%%%%%%%%%%%%%%%%%%%%%%%%%%%%%%%%%%%%%%%%%%%%%%%%

\section{Correspondence between the poles of $h^{-1}$ and $h_{\sharp}^{-1}$}\label{sec:3}

In this section, we assume that $\{X_k\}$ satisfies (\ref{eq:ARMA}). 
Let $h$ and $h_{\sharp}$ be as in (\ref{eq:farima529}) and (\ref{eq:rational333}), 
respectively, both satisfying (\ref{eq:C}).

Since $h^{-1}$ also satisfies (\ref{eq:C}), we can write $h^{-1}(z)$ in the form 
\begin{equation}
h(z)^{-1} = - \rho_0 - \sum_{{\mu}=1}^{K} \sum_{j=1}^{m_{\mu}} \frac{1}{(1-\barp_{\mu}z)^j}\rho_{{\mu}, j} 
- \sum_{j=1}^{m_0} z^j \rho_{0,j},
\label{eq:hinverse162}
\end{equation}
where
\begin{equation}
\left\{
\begin{aligned}
&K\in\N\cup\{0\},\\
&p_{\mu}\in\D\setminus \{0\},\quad {\mu} \in \{1,\dots,K\},\qquad p_{\mu}\neq p_{\nu},\quad \mu\neq \nu,\\
&m_{\mu}\in\N,\quad {\mu} \in \{1,\dots,K\}, \qquad m_0\in\N\cup\{0\},\\
&\rho_{{\mu}, j}\in\C^{d\times d},\quad {\mu} \in \{0,\dots,K\},\ j \in \{1,\dots,m_{\mu}\}, 
\qquad \rho_{0} \in\C^{d\times d},\\
&\rho_{{\mu},m_{\mu}}\neq 0,\quad {\mu} \in \{0,\dots,K\}.
\end{aligned}
\right.
\label{eq:hinverse163}
\end{equation}
In fact, we can obtain the expression (\ref{eq:hinverse162}) from the partial fraction decompositions of 
the entries of $h(z)^{-1}$; see Example \ref{exmp:pfd123} below. 
We remark that the convention $\sum_{k=1}^0=0$ is adopted in the sums on the right-hand side of (\ref{eq:hinverse162}).

The next theorem shows that $h_{\sharp}^{-1}$ 
of a multivariate ARMA process has the same $m_0$ and the same poles 
with the same multiplicities as $h^{-1}$. 

\begin{theorem}\label{thm:hsharpinv123}
For $m_0$, $K$ and $(p_1, m_1), \dots, (p_K, m_K)$ in 
$(\ref{eq:hinverse162})$ with $(\ref{eq:hinverse163})$, 
$h_{\sharp}^{-1}$ has the form 
\begin{equation}
h_{\sharp}(z)^{-1} = - \rho_0^{\sharp} - 
\sum_{{\mu}=1}^{K} \sum_{j=1}^{m_{\mu}} \frac{1}{(1-\bar{p}_{\mu}z)^j}\rho_{{\mu}, j}^{\sharp} 
- \sum_{j=1}^{m_0} z^j \rho_{0,j}^{\sharp},
\label{eq:hsharpinv162}
\end{equation}
where
\begin{equation}
\left\{
\begin{aligned}
&\rho_{{\mu}, j}^{\sharp}\in\C^{d\times d}, \quad 
{\mu} \in \{0,\dots,K\},\ j \in \{1,\dots,m_{\mu}\}, \qquad 
\rho_{0}^{\sharp} \in\C^{d\times d},\\
&\rho_{{\mu},m_{\mu}}^{\sharp}\neq 0, \quad {\mu} \in \{0,\dots,K\}.
\end{aligned}
\right.
\label{eq:hsharpinv163}
\end{equation}
Moreover, we have
\begin{equation}
\rho_{\mu,m_{\mu}} h_{\sharp}(p_{\mu})^* = h(p_{\mu})^* \rho_{\mu,m_{\mu}}^{\sharp}, 
\qquad \mu \in \{0,\dots,K\}.
\label{eq:rho-h555}
\end{equation}
\end{theorem}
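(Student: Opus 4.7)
The plan is to extract the structure of $h_\sharp^{-1}$ by equating the two rational expressions for $w^{-1}$. For a rational matrix function $f$, I would introduce the paraconjugate $\tilde f(z) := f(1/\bar z)^*$, which on $\T$ coincides with $f^*$ and extends to a rational function of $z$. Setting $g := h^{-1}$ and $g_\sharp := h_\sharp^{-1}$ and inverting the two factorizations in (\ref{eq:rational333}) gives $g^* g = g_\sharp g_\sharp^*$ on $\T$, which, after rational continuation, becomes the identity
\[
\tilde g(z)\, g(z) = g_\sharp(z)\, \tilde g_\sharp(z).
\]
By (\ref{eq:C}) both $g$ and $g_\sharp$ are analytic and invertible on $\overline{\D}$, so any pole of either side lying in $\overline{\D}$ must come from the paraconjugate factor alone.

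A direct substitution $z\mapsto 1/\bar z$ in (\ref{eq:hinverse162}) followed by Hermitian conjugation yields
\[
\tilde g(z) = -\rho_0^* - \sum_{\mu=1}^K \sum_{j=1}^{m_\mu} \frac{z^j}{(z-p_\mu)^j}\rho_{\mu,j}^* - \sum_{j=1}^{m_0} z^{-j}\rho_{0,j}^*,
\]
so $\tilde g$ has a pole of order exactly $m_\mu$ at each $p_\mu\in\D$ and, if $m_0\ge 1$, a pole of order $m_0$ at $z=0$. I would then match singularities in $\overline{\D}$ on the two sides of the rational identity. At $z=p_\mu$ the leading singular term of the LHS is
\[
-\,\frac{p_\mu^{\,m_\mu}}{(z-p_\mu)^{m_\mu}}\, \rho_{\mu,m_\mu}^* \, h(p_\mu)^{-1},
\]
which is nonzero because $\rho_{\mu,m_\mu}\neq 0$ and $h(p_\mu)$ is invertible. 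Since $g_\sharp$ is regular at $p_\mu$, this must be matched by a pole of $\tilde g_\sharp$ at $p_\mu$ of order at least $m_\mu$; running the same argument with the roles of $g$ and $g_\sharp$ swapped gives the opposite inequality and also shows that $\tilde g_\sharp$ has no further poles in $\D$. An identical comparison at $z=0$ handles $m_0$ (with the natural convention $p_0:=0$ for the $\mu=0$ case). Translating the resulting pole data of $\tilde g_\sharp$ back through the paraconjugate gives the partial-fraction form (\ref{eq:hsharpinv162}), with the multiplicities in (\ref{eq:hsharpinv163}) matching those of $h^{-1}$.

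Finally, equating the leading singular coefficients on the two sides at each $p_\mu$ (and at $z=0$ for $\mu=0$) gives
\[
\rho_{\mu,m_\mu}^* \, h(p_\mu)^{-1} = h_\sharp(p_\mu)^{-1}\, (\rho_{\mu,m_\mu}^\sharp)^*,
\]
and taking Hermitian conjugates together with multiplying through by $h(p_\mu)^*$ on the right and $h_\sharp(p_\mu)^*$ on the left produces exactly (\ref{eq:rho-h555}). This identity, combined with the invertibility of $h(p_\mu)$ and $h_\sharp(p_\mu)$, immediately forces $\rho_{\mu,m_\mu}^\sharp\neq 0$, completing (\ref{eq:hsharpinv163}).

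The main obstacle I anticipate is ensuring that no matrix cancellation contaminates the leading-order comparison: this is what compels me to use both the assumed nonvanishing of $\rho_{\mu,m_\mu}$ and the nonvanishing of $\det h$, $\det h_\sharp$ on $\overline{\D}$. The secondary piece of bookkeeping is the translation of the pole of $h^{-1}$ at infinity (the $\sum z^j\rho_{0,j}$ part) into the pole of $\tilde g$ at $z=0$, which allows $\mu=0$ to be handled in parallel with the finite poles rather than through a separate asymptotic argument at $\infty$.
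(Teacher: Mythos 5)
Your argument is correct, and it reaches the theorem by a route that differs in mechanism from the paper's. You continue $w^{-1}$ rationally off $\T$, i.e.\ you work with the identity $h^{\dagger}(z)^{-1}h(z)^{-1}=h_{\sharp}(z)^{-1}h_{\sharp}^{\dagger}(z)^{-1}$ (in your notation $\tilde g\,g=g_\sharp\,\tilde g_\sharp$), and you read off the pole locations, orders, and leading singular coefficients inside $\overline{\D}$ directly, using the invertibility of $h$ and $h_{\sharp}$ on $\overline{\D}$ to rule out cancellation; the relation $\rho_{\mu,m_\mu}^*h(p_\mu)^{-1}=h_\sharp(p_\mu)^{-1}(\rho^\sharp_{\mu,m_\mu})^*$ you extract at the top order is exactly equivalent to (\ref{eq:rho-h555}) after conjugation, and the $\mu=0$ case is correctly folded in via the pole of the paraconjugate at $z=0$. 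The paper instead works with the phase function $h^*h_{\sharp}^{-1}=h^{-1}h_{\sharp}^*$: it computes its Fourier coefficients $\beta_k$ in two ways by applying Cauchy's formula to $h_{\sharp}h^{\dagger\,-1}$ and to $h_{\sharp}^{\dagger\,-1}h$ (Propositions \ref{prop:hh316} and \ref{prop:beta654} and the analogous computation in Appendix \ref{sec:Append-B}), and then matches the two expansions of $\beta_{n+1}^*$ using the linear-independence statement of Proposition \ref{eq:plind123}. Your version is more elementary and shorter: it needs neither the residue/Fourier computation nor the independence lemma, since equality of rational functions already forces equality of their singular parts. What the paper's detour buys is that the residue matrices $\theta_{\mu,j}$ and the closed formula for $\beta_{n+1}^*$ produced along the way are not throwaway: they define $\Theta$ in (\ref{eq:Theta555})--(\ref{eq:subTheta555}) and drive Lemma \ref{lem:beta162}, which is central to the proof of Theorem \ref{thm:phirep116}. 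So if you adopted your proof inside this paper you would still have to establish Proposition \ref{prop:beta654} separately later; as a standalone proof of Theorem \ref{thm:hsharpinv123}, however, your argument is complete and sound.
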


The first half of Theorem \ref{thm:hsharpinv123} is a key ingredient of 
the proof of Theorem \ref{thm:phirep116} below, 
while the relations (\ref{eq:rho-h555}) play an important role in the proof of Theorem \ref{thm:Bax323} below.

\begin{exmp}\label{exmp:pfd123}
For $p\in\D$, let
\[
h(z)=
\left(
\begin{matrix}
1 & 0\\
1/(1-\barp z) & 1
\end{matrix}
\right).
\]
Then $h$ satisfies (\ref{eq:C}). 
For this $h$, we can take
\[
h_{\sharp}(z)=r
\left(
\begin{matrix}
1 - \vert p\vert^2 & 1\\
-1 + \dfrac{1 - \vert p\vert^2}{1 - \bar pz} & -\vert p\vert^2 + \dfrac{1}{1-\barp z}
\end{matrix}
\right),
\]
where $r:=1/\sqrt{1 - \vert p\vert^2 + \vert p\vert^4}$ 
(see Example 3 in \cite{IKP2}). We have
\[
h(z)^{-1} 
= \left(
\begin{matrix}
1 & 0\\
-1/(1-\barp z) & 1
\end{matrix}
\right),
\quad
h_{\sharp}(z)^{-1}=r
\left(
\begin{matrix}
-\vert p\vert^2 + \dfrac{1}{1-\barp z} & -1\\
1 - \dfrac{1 - \vert p\vert^2}{1 - \bar pz} & 1 - \vert p\vert^2
\end{matrix}
\right),
\]
so that $K=1$, $m_0=0$, $m_1=1$, $p_1=p$, and
\[
\rho_0 =-
\left(
\begin{matrix}
1 & 0\\
0 & 1
\end{matrix}
\right),
\quad 
\rho_{1,1} =
\left(
\begin{matrix}
0  & 0\\
1 & 0
\end{matrix}
\right),\quad 
\rho^{\sharp}_0=
r
\left(
\begin{matrix}
\vert p\vert^2  & 1\\
-1  & -1 + \vert p\vert^2
\end{matrix}
\right),
\quad 
\rho^{\sharp}_{1,1}=
r
\left(
\begin{matrix}
-1 & 0\\
1 - \vert p\vert^2 & 0
\end{matrix}
\right).
\]
\end{exmp}

%%%%%%%%%%%%%%%%%%%%%%%%%%%%%%%%%%%%%%%%%%%%%%%%%%%%%%%%%%%%%

\section{Building block matrices}\label{sec:4}

In this section, we introduce and study some matrices that serve as building blocks for 
the closed-form expression of $\phi_{n, j}$. 
We assume that $\{X_k\}$ satisfies (\ref{eq:ARMA}). 
Let $h$ and $h_{\sharp}$ be as in (\ref{eq:farima529}) and (\ref{eq:rational333}), respectively, 
both satisfying (\ref{eq:C}). 
We also assume that $K\ge 1$ for $K$ in (\ref{eq:hinverse162}). 
This assumption implies that $\{X_k\}$ is a $d$-variate ARMA process 
that is not an AR process; see Remark \ref{rem:AR612} below. 
For $m_1,\dots,m_K$ in (\ref{eq:hinverse162}), we define $M$ by (\ref{eq:sum-mmu123}).

For $\mu \in \{1,\dots,K\}$, $i \in \{1,\dots,m_{\mu}\}$, and $n\in\N\cup\{0\}$, we define
\begin{equation}
\p_{{\mu}, i}(n):=p_{{\mu},i}(n)I_d\in \C^{d\times d}
\label{eq:p362}
\end{equation}
using $p_{{\mu},i}(n)$ in (\ref{eq:p-func333}). 
For $n\in \N\cup\{0\}$, we also define $\p_n \in \C^{dM\times d}$ by the 
following block representation:
\begin{equation}
\p_n
:=(\p_{1, 1}(n), \dots, \p_{1, m_1}(n)\ \vert\ \p_{2, 1}(n), \dots, \p_{2, m_2}(n)\ \vert\ 
\cdots\ \vert\ 
\p_{K, 1}(n), \dots, \p_{K, m_{K}}(n))^{\top}.
\label{eq:ppp123}
\end{equation}
Notice that
\begin{equation}
\p_0 = (I_d, 0, \dots, 0\ \vert\ I_d, 0, \dots, 0\ \vert\ 
\cdots\ \vert\ 
I_d, 0, \dots, 0)^{\top} \in \C^{dM\times d}.
\label{eq:p000}
\end{equation}
We define $\Lambda\in\C^{dM\times dM}$ by
\begin{equation}
\Lambda := \sum_{\ell=0}^{\infty} \p_{\ell} \p_{\ell}^*.
\label{eq:Cdef12}
\end{equation}
For ${\mu}, {\nu}\in\{1,2,\dots,K\}$, we define $\Lambda^{{\mu},{\nu}}\in \C^{dm_{\mu}\times dm_{\nu}}$ by the 
block representation
\[
\Lambda^{{\mu},{\nu}} := \left(
\begin{matrix}
\lambda^{{\mu}, {\nu}}(1, 1) & \lambda^{{\mu}, {\nu}}(1, 2) & \cdots & \lambda^{{\mu}, {\nu}}(1, m_{\nu}) \cr
\lambda^{{\mu}, {\nu}}(2, 1) & \lambda^{{\mu}, {\nu}}(2, 2) & \cdots & \lambda^{{\mu}, {\nu}}(2, m_{\nu}) \cr
\vdots                       &  \vdots                      & \ddots & \vdots                             \cr
\lambda^{\mu,\nu}(m_{\mu},1) & \lambda^{\mu,\nu}(m_{\mu},2) & \cdots & \lambda^{\mu,\nu}(m_{\mu},m_{\nu}) \cr
\end{matrix}
\right),
\]
where, for $i \in \{1,\dots,m_{\mu}\}$, $j \in \{1,\dots,m_{\nu}\}$,
\[
\lambda^{{\mu},{\nu}}(i, j) 
:= \sum_{r=0}^{j-1} 
\binom{i-1}{r} \binom{i+j-r-2}{i-1} 
\frac{p_{\mu}^{j - r -1}\overline{p}_{\nu}^{i-r-1}}{(1-p_{\mu}\overline{p}_{\nu})^{i+j-r-1}} I_d 
\in \C^{d\times d}.
\]

Here is a closed-form expression of $\Lambda$.

\begin{lemma}\label{lem:C625}
The matrix $\Lambda$ has the following block representation:
\[
\Lambda=\left(
\begin{matrix}
\Lambda^{1, 1}       &  \Lambda^{1, 2}   &  \cdots  & \Lambda^{1, K}  \cr
\Lambda^{2, 1}       &  \Lambda^{2, 2}   &  \cdots  & \Lambda^{2, K}  \cr
\vdots               &  \vdots           &  \ddots  & \vdots          \cr
\Lambda^{K, 1}       &  \Lambda^{K, 2}   &  \cdots  & \Lambda^{K, K}  \cr
\end{matrix}
\right).
\]
\end{lemma}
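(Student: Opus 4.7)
The plan is to reduce the lemma to a single scalar generating-function identity and then establish that identity by differentiating $1/(1-uv)$. By the block definition \eqref{eq:ppp123} of $\p_\ell$, I partition $\Lambda$ into $K\times K$ blocks of sizes $dm_\mu \times dm_\nu$, and each such block into $m_\mu\times m_\nu$ sub-blocks of size $d\times d$. The $(i,j)$ sub-block of the $(\mu,\nu)$ block of $\p_\ell\p_\ell^*$ is then simply $p_{\mu,i}(\ell)\overline{p_{\nu,j}(\ell)}\,I_d$. Since $\vert p_{\mu,i}(\ell)\vert \le \ell^{i-1}\vert p_\mu\vert^{\ell-i+1}$ with $\vert p_\mu\vert<1$, the series $\sum_{\ell\ge 0}\p_\ell\p_\ell^*$ converges absolutely, so entrywise summation reduces everything to the scalar claim
\[
\sum_{\ell=0}^{\infty}\binom{\ell}{i-1}\binom{\ell}{j-1} p_\mu^{\ell-i+1}\overline{p}_\nu^{\ell-j+1} = \sum_{r=0}^{j-1}\binom{i-1}{r}\binom{i+j-r-2}{i-1}\frac{p_\mu^{j-r-1}\overline{p}_\nu^{i-r-1}}{(1-p_\mu\overline{p}_\nu)^{i+j-r-1}}.
\]

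To establish this, I recognize the left-hand side as a mixed partial derivative. Using the identity $\binom{\ell}{i-1}u^{\ell-i+1} = \tfrac{1}{(i-1)!}\partial_u^{i-1}u^\ell$ (both sides vanishing for $\ell<i-1$) together with its analogue in $v$, the left-hand side equals
\[
\frac{1}{(i-1)!(j-1)!}\,\partial_u^{i-1}\partial_v^{j-1}\frac{1}{1-uv}\bigg|_{u=p_\mu,\,v=\overline{p}_\nu}.
\]
Termwise differentiation is legitimate because $1/(1-uv)$ is holomorphic on the open set $\{\vert uv\vert<1\}$, which contains $(p_\mu,\overline{p}_\nu)$. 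Now $\partial_v^{j-1}(1-uv)^{-1} = (j-1)!\,u^{j-1}(1-uv)^{-j}$, and applying Leibniz's rule to $\partial_u^{i-1}[u^{j-1}(1-uv)^{-j}]$, cancelling the $(j-1)!$ factors, and regrouping the remaining descending factorials via $(i+j-r-2)!/[(i-1)!(j-1-r)!] = \binom{i+j-r-2}{i-1}$ delivers the right-hand side with upper summation limit $\min(i-1,j-1)$. Since $\binom{i-1}{r}=0$ for $r>i-1$, the upper limit may be raised to $j-1$, matching the definition of $\lambda^{\mu,\nu}(i,j)$.

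The proof is essentially mechanical once the differential interpretation is in place; the only obstacle I anticipate is the combinatorial bookkeeping inside the Leibniz expansion, specifically tracking the two descending factorials $(j-1)!/(j-1-r)!$ (from $\partial_u^r u^{j-1}$) and $(j+i-r-2)!/(j-1)!$ (from $\partial_u^{i-1-r}(1-uv)^{-j}$) and recognizing their product as the stated binomial coefficient.
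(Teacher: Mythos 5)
Your proof is correct and takes essentially the same route as the paper: the paper reduces the lemma to Proposition \ref{prop:sum123} (the case $n=0$, with $i,j$ shifted by one), which is itself proved by exactly your computation — differentiating $1/(1-xy)$ termwise and via Leibniz's rule and comparing. The only difference is that the paper proves the identity with an extra parameter $n$ so it can be reused for Lemma \ref{lem:v516}, whereas you specialize to $n=0$ directly.
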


We define
\begin{equation}
\tilde{h}(z) := \{h_{\sharp}(\overline{z})\}^*.
\label{eq:outft628}
\end{equation}
Then $\tilde{h}$ satisfies (\ref{eq:C}). 
We define, respectively, the forward MA and AR coefficients $c_k$ and $a_k$
of $\{X_k\}$ by
\[
h(z)=\sum_{k=0}^{\infty}z^kc_k,\qquad
-h(z)^{-1}=\sum_{k=0}^{\infty}z^ka_k,\qquad z\in\D,
\]
and the backward MA and AR coefficients $\tilde{c}_k$
and $\tilde{a}_k$ of $\{X_k\}$ by
\[
\tilde{h}(z)=\sum_{k=0}^{\infty}z^k\tilde{c}_k,\qquad
-\tilde{h}(z)^{-1}=\sum_{k=0}^{\infty}z^k\tilde{a}_k,\qquad z\in\D.
\]
All of $\{c_k\}$, $\{a_k\}$, $\{\tilde{c}_k\}$ and $\{\tilde{a}_k\}$ are $\C^{d\times d}$-valued
sequences that decay exponentially fast to zero,
and we have 
$c_0a_0=\tilde{c}_0\tilde{a}_0=-I_d$. 
We have the AR representation 
$\sum_{k=-\infty}^na_{n-k}X_k + \varepsilon_n = 0$ 
and the infinite prediction formula 
$P_{(-\infty, -1]}X_0 = \sum_{k=1}^{\infty}\phi_kX_{-k}$, 
where
\begin{equation}
\phi_k:=c_0a_k \in \C^{d\times d},\qquad k\in\N.
\label{eq:ipc574}
\end{equation}
We call $\phi_k$ the infinite predictor coefficients of $\{X_k\}$.

\begin{remark}\label{rem:AR612}
If $K$ in (\ref{eq:hinverse163}) satisfies $K=0$, then $a_0=\rho_0$, 
$a_k=\rho_{0,k}\ (1\le k\le m_0)$ and $a_k=0\ (k\ge m_0+1)$. In particular, we have 
$\sum_{k=0}^{m_0}a_{k}X_{n-k} + \varepsilon_n = 0$ for $n\in\Z$. 
This implies that $P_{[-n,-1]}X_0=\phi_1X_{-1}+\cdots+\phi_{m_0}X_{-m_0}$ for 
$n\ge \max(m_0, 1)$ and $\phi_k$ in (\ref{eq:ipc574}). 
Therefore, the finite predictor coefficients $\phi_{n,j}$ in (\ref{eq:forward-phi123}) are 
trivially obtained. By this reason, we assume $K\ge 1$ in Sections \ref{sec:4}--\ref{sec:6}.
\end{remark}

For $\tilde{h}$ in (\ref{eq:outft628}), we see from 
Theorem \ref{thm:hsharpinv123} that
\begin{equation}
\tilde{h}(z)^{-1} = - \tilde{\rho}_{0} - 
\sum_{{\mu}=1}^{K} \sum_{j=1}^{m_{\mu}} \frac{1}{(1 - p_{\mu} z)^j} \tilde{\rho}_{{\mu}, j} 
- \sum_{j=1}^{m_0} z^j \tilde{\rho}_{0,j},
\label{eq:htildeinv162}
\end{equation}
where
\[
\tilde{\rho}_{0} := (\rho_{0}^{\sharp})^*, \qquad 
\tilde{\rho}_{\mu,j} := (\rho_{\mu, j}^{\sharp})^*, \quad 
\mu \in \{0,\dots,K\},\ j \in \{1,\dots,m_{\mu}\}.
\]

\begin{proposition}\label{prop:a516}
We have
\begin{align}
a_n         &= \sum_{{\mu}=1}^{K} \sum_{j=1}^{m_{\mu}} \binom{n+j-1}{j-1} \barp_{\mu}^n
\rho_{{\mu}, j}, \qquad n\ge m_0 + 1,
\label{eq:a363}\\
\tilde{a}_n &= \sum_{{\mu}=1}^{K} \sum_{j=1}^{m_{\mu}} \binom{n+j-1}{j-1} p_{\mu}^n
\tilde{\rho}_{{\mu}, j}, \qquad n\ge m_0 + 1.
\label{eq:atilde361}
\end{align}
Moreover, if $m_0\ge 1$, then we have
\begin{align}
a_n         &= \rho_{0,n} + \sum_{{\mu}=1}^{K} \sum_{j=1}^{m_{\mu}} \binom{n+j-1}{j-1} \barp_{\mu}^n
\rho_{{\mu}, j}, \qquad n \in \{1,\dots,m_0\},
\label{eq:a364}\\
\tilde{a}_n &= \tilde{\rho}_{0,n} + \sum_{{\mu}=1}^{K} \sum_{j=1}^{m_{\mu}} \binom{n+j-1}{j-1} p_{\mu}^n
\tilde{\rho}_{{\mu}, j},
 \qquad  n \in \{1,\dots,m_0\}.
\label{eq:atilde362}
\end{align}
\end{proposition}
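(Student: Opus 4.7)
The plan is to expand the partial-fraction representation (\ref{eq:hinverse162}) of $h^{-1}$ as a power series in $z$ about the origin and match coefficients against $-h(z)^{-1}=\sum_{k=0}^{\infty}z^k a_k$. The only analytic input needed is the binomial series
\[
\frac{1}{(1-w)^j} = \sum_{n=0}^{\infty}\binom{n+j-1}{j-1}w^n, \qquad |w|<1,
\]
applied with $w=\bar p_{\mu}z$; this is legitimate on $\D$ because each $p_{\mu}\in \D\setminus\{0\}$, so $|\bar p_{\mu}z|<1$ on $\D$.

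First I would insert the above expansion into (\ref{eq:hinverse162}), flip the overall sign, and interchange the (absolutely convergent) $n$-sum with the finite $(\mu,j)$-sum to obtain, for $z\in \D$,
\[
-h(z)^{-1}
= \rho_0
+ \sum_{n=0}^{\infty}\left(\sum_{\mu=1}^{K}\sum_{j=1}^{m_{\mu}}\binom{n+j-1}{j-1}\bar p_{\mu}^{\,n}\rho_{\mu,j}\right)z^n
+ \sum_{j=1}^{m_0}z^j\rho_{0,j}.
\]
Reading off the coefficient of $z^n$ and noting that the polynomial tail $\sum_{j=1}^{m_0}z^j\rho_{0,j}$ contributes precisely $\rho_{0,n}$ when $1\le n\le m_0$ and nothing when $n\ge m_0+1$, I obtain (\ref{eq:a363}) and (\ref{eq:a364}) at once. (The case $n=0$ recovers $a_0=\rho_0+\sum_{\mu,j}\rho_{\mu,j}$, which is not part of the claim.)

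The derivation of (\ref{eq:atilde361}) and (\ref{eq:atilde362}) is entirely parallel, now applied to (\ref{eq:htildeinv162}); the only change is that the poles of $\tilde h^{-1}$ are at $p_{\mu}^{-1}$ rather than $\bar p_{\mu}^{-1}$, so the binomial series is in $w=p_{\mu}z$ and the coefficient carries $p_{\mu}^{\,n}$ in place of $\bar p_{\mu}^{\,n}$. Crucially, the fact that (\ref{eq:htildeinv162}) has the same $K$, the same multiplicities $m_{\mu}$, and the same exponent $m_0$ as (\ref{eq:hinverse162}) is exactly what Theorem \ref{thm:hsharpinv123} (combined with the definition (\ref{eq:outft628})) gives us; without that theorem the statement would not even be well-posed.

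There is no real obstacle here — the whole proposition is a bookkeeping exercise whose only subtlety is separating the ranges $n\ge m_0+1$ and $1\le n\le m_0$ according to whether the polynomial tail contributes. The conceptual work has already been done in Theorem \ref{thm:hsharpinv123}; once the shapes (\ref{eq:hinverse162}) and (\ref{eq:htildeinv162}) are in hand, Proposition \ref{prop:a516} follows by expanding each summand as a geometric-type series and collecting powers of $z$.
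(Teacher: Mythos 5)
Your proposal is correct and follows essentially the same route as the paper: expand each term of the partial-fraction representations (\ref{eq:hinverse162}) and (\ref{eq:htildeinv162}) via the binomial series $\frac{1}{(1-qz)^j}=\sum_{n\ge 0}\binom{n+j-1}{j-1}q^nz^n$ and match coefficients of $z^n$, with the polynomial tail accounting for the extra $\rho_{0,n}$ (resp.\ $\tilde{\rho}_{0,n}$) term when $1\le n\le m_0$. The only cosmetic difference is that you treat the $a_n$ case explicitly and the $\tilde{a}_n$ case by symmetry, while the paper does the reverse.
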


\begin{proof}[\bf Proof]
Since
\begin{equation}
\frac{1}{(1 - q z)^j} = \sum_{n=0}^{\infty} \binom{n+j-1}{j-1} q^n z^n, \qquad 
q, z\in\D,\ j\in\N,
\label{eq:binom156}
\end{equation}
(\ref{eq:htildeinv162}) gives
\[
\tilde{h}(z)^{-1} = - \tilde{\rho}_{0} - 
\sum_{n=0}^{\infty} z^n \sum_{{\mu}=1}^{K} \sum_{j=1}^{m_{\mu}} \binom{n+j-1}{j-1} p_{\mu}^n
\tilde{\rho}_{{\mu}, j} 
- \sum_{j=1}^{m_0} z^j \tilde{\rho}_{0,j}.
\]
Thus, (\ref{eq:atilde361}) and (\ref{eq:atilde362}) follow. 
Similarly, we obtain (\ref{eq:a363}) and (\ref{eq:a364}) from (\ref{eq:hinverse162}) and (\ref{eq:binom156}).
\end{proof}

For $n\in\N$, we define $v_n, \tilde{v}_n \in\C^{dM\times d}$ by
\begin{align*}
v_n &:= \sum_{\ell = 0}^{\infty} \p_{\ell} a_{n + \ell},\\
\tilde{v}_n &:= \sum_{\ell = 0}^{\infty} \bar{\p}_{\ell} \tilde{a}_{n + \ell}.
\end{align*}
To give closed expressions for $v_n$ and $\tilde{v}_n$, we introduce some matrices. 
For $n\in\N$ and ${\mu}, {\nu}\in\{1,2,\dots,K\}$, 
we define $\Xi_n^{{\mu},{\nu}}\in \C^{dm_{\mu}\times dm_{\nu}}$ by 
the block representation
\[
\Xi_n^{{\mu},{\nu}} := \left(
\begin{matrix}
\xi_n^{{\mu}, {\nu}}(1, 1) & \xi_n^{{\mu}, {\nu}}(1, 2) & \cdots & \xi_n^{{\mu}, {\nu}}(1, m_{\nu}) \cr
\xi_n^{{\mu}, {\nu}}(2, 1) & \xi_n^{{\mu}, {\nu}}(2, 2) & \cdots & \xi_n^{{\mu}, {\nu}}(2, m_{\nu}) \cr
\vdots                     &  \vdots                    & \ddots & \vdots                           \cr
\xi_n^{\mu,\nu}(m_{\mu},1) & \xi_n^{\mu,\nu}(m_{\mu},2) & \cdots & \xi_n^{\mu,\nu}(m_{\mu},m_{\nu}) \cr
\end{matrix}
\right),
\]
where, for $n\in\N$, $i \in \{1,\dots,m_{\mu}\}$, $j \in \{1,\dots,m_{\nu}\}$, 
$\xi_n^{{\mu},{\nu}}(i, j) \in \C^{d\times d}$ is defined by
\[
\xi_n^{{\mu},{\nu}}(i, j)
:= \sum_{r=0}^{j-1} 
\binom{n+i+j-2}{r} \binom{i+j-r-2}{i-1} 
\frac{p_{\mu}^{j - r -1}\overline{p}_{\nu}^{n+i+j-r-2}}{(1-p_{\mu}\overline{p}_{\nu})^{i+j-r-1}} I_d.
\]
For $n\in\N$, we define $\Xi_n\in\C^{dM\times dM}$ by
\[
\Xi_n:=\left(
\begin{matrix}
\Xi_n^{1, 1}       &  \Xi_n^{1, 2}   &  \cdots  & \Xi_n^{1, K}  \cr
\Xi_n^{2, 1}       &  \Xi_n^{2, 2}   &  \cdots  & \Xi_n^{2, K}  \cr
\vdots             &  \vdots         &  \ddots  & \vdots        \cr
\Xi_n^{K, 1}       &  \Xi_n^{K, 2}   &  \cdots  & \Xi_n^{K, K}  \cr
\end{matrix}
\right).
\]
We also define $\rho \in \C^{dM\times d}$ and $\tilde{\rho} \in \C^{dM\times d}$ 
by the block representations
\[
\rho
:=(\rho_{1, 1}^{\top}, \dots, \rho_{1, m_1}^{\top} \ \vert \ 
\rho_{2, 1}^{\top}, \dots, \rho_{2, m_2}^{\top} \ \vert \ 
\cdots \ \vert \ 
\rho_{K, 1}^{\top}, \dots, \rho_{K, m_{K}}^{\top})^{\top}
\]
and
\[
\begin{aligned}
\tilde{\rho}
&:=(\tilde{\rho}_{1, 1}^{\top}, \dots, \tilde{\rho}_{1, m_1}^{\top} 
\ \vert \ \tilde{\rho}_{2, 1}^{\top}, \dots, \tilde{\rho}_{2, m_2}^{\top} \ \vert \ 
\cdots \ \vert \ 
\tilde{\rho}_{K, 1}^{\top}, \dots, \tilde{\rho}_{K, m_{K}}^{\top})^{\top}\\
&=\left(\overline{\rho_{1, 1}^{\sharp}}, \dots, \overline{\rho_{1, m_1}^{\sharp}} 
\ \vert \ \overline{\rho_{2, 1}^{\sharp}}, \dots, \overline{\rho_{2, m_2}^{\sharp}} \ \vert \ 
\cdots \ \vert \ 
\overline{\rho_{K, 1}^{\sharp}}, \dots, \overline{\rho_{K, m_{K}}^{\sharp}}\right)^{\top},
\end{aligned}
\]
respectively.

Here are closed-form expressions for $v_n$ and $\tilde{v}_n$.

\begin{lemma}\label{lem:v516}
We have
\begin{align}
v_n         &= \Xi_n \rho, \qquad n\ge m_0 + 1,
\label{eq:v222}\\
\tilde{v}_n &=  \overline{\Xi}_n \tilde{\rho}, \qquad n\ge m_0 + 1.
\label{eq:vtilde222}
\end{align}
Moreover, if $m_0\ge 1$, then we have
\begin{align}
v_n         &=  \Xi_n \rho + \sum_{\ell = 0}^{m_0-n} \p_{\ell} \rho_{0,n + \ell}, \qquad n \in \{1,\dots,m_0\},
\label{eq:v333}\\
\tilde{v}_n &=  \overline{\Xi}_n \tilde{\rho} + \sum_{\ell = 0}^{m_0-n} \overline{\p}_{\ell} \tilde{\rho}_{0,n + \ell},
 \qquad  n \in \{1,\dots,m_0\}.
\label{eq:vtilde333}
\end{align}
\end{lemma}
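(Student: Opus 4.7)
The plan is to substitute the explicit formulas for the AR coefficients $a_{n+\ell}$ and $\tilde a_{n+\ell}$ from Proposition~\ref{prop:a516} into the defining series for $v_n$ and $\tilde v_n$, and then, block by block, reduce the claim to a single scalar combinatorial identity that can be verified by standard generating-function manipulations. The case split $n \ge m_0+1$ versus $1 \le n \le m_0$ mirrors the split already present in Proposition~\ref{prop:a516}.

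First I would fix $n \ge m_0+1$, so that $n+\ell \ge m_0+1$ for every $\ell \ge 0$ and (\ref{eq:a363}) applies to every term in the series for $v_n$. Inserting this expression and using the block form (\ref{eq:ppp123}) together with (\ref{eq:p362}), the $(\mu,i)$-block of $v_n$ becomes the absolutely convergent double sum
\[
\sum_{\nu=1}^{K}\sum_{j=1}^{m_\nu} \Bigl\{ \sum_{\ell=0}^{\infty} \binom{\ell}{i-1}\binom{n+\ell+j-1}{j-1} p_\mu^{\ell-i+1}\bar p_\nu^{\,n+\ell} \Bigr\} \rho_{\nu,j}.
\]
Comparing with the definition of $\xi_n^{\mu,\nu}(i,j)$, identity (\ref{eq:v222}) is reduced to the scalar claim that the quantity in braces equals
\[
\sum_{r=0}^{j-1}\binom{n+i+j-2}{r}\binom{i+j-r-2}{i-1}\frac{p_\mu^{j-r-1}\bar p_\nu^{\,n+i+j-r-2}}{(1-p_\mu \bar p_\nu)^{i+j-r-1}}.
\]

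This scalar identity is the step I expect to be the main obstacle, and I would prove it by two applications of Vandermonde's identity. The first, applied with $A=n+i+j-2$ and $B=\ell-(i-1)$, gives $\binom{n+\ell+j-1}{j-1}=\sum_{r=0}^{j-1}\binom{n+i+j-2}{r}\binom{\ell-i+1}{j-1-r}$; this pulls $\sum_{r=0}^{j-1}\binom{n+i+j-2}{r}$ to the outside. After the substitution $k=\ell-(i-1)$ (which is legal since $\binom{\ell}{i-1}$ vanishes for $\ell<i-1$), the residual $\ell$-series becomes $\sum_{k\ge 0}\binom{k+i-1}{i-1}\binom{k}{j-1-r}(p_\mu \bar p_\nu)^k$, which a second Vandermonde combined with the standard formula $\sum_{m\ge 0}\binom{m+s}{m}q^m=(1-q)^{-(s+1)}$ evaluates to $\binom{i+j-r-2}{i-1}(p_\mu\bar p_\nu)^{j-1-r}/(1-p_\mu\bar p_\nu)^{i+j-r-1}$. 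Combining with the prefactor $\bar p_\nu^{n+i-1}$ that was produced by the substitution recovers exactly the claimed right-hand side.

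For $1\le n\le m_0$ the AR coefficient $a_{n+\ell}$ is given by (\ref{eq:a364}) for $0\le \ell\le m_0-n$ and by (\ref{eq:a363}) for $\ell\ge m_0-n+1$; since (\ref{eq:a364}) is exactly (\ref{eq:a363}) plus the correction $\rho_{0,n+\ell}$, splitting the series and reassembling the $\rho_{\nu,j}$ terms back over the full range $\ell\ge 0$ produces $\Xi_n\rho$ exactly as before, while the leftover correction contributes precisely $\sum_{\ell=0}^{m_0-n}\mathbf{p}_\ell \rho_{0,n+\ell}$, giving (\ref{eq:v333}). The formulas (\ref{eq:vtilde222}) and (\ref{eq:vtilde333}) for $\tilde v_n$ follow from the completely parallel calculation: $\bar{\mathbf p}_\ell$ contributes factors $\bar p_\mu^{\ell-i+1}$, while (\ref{eq:atilde361})--(\ref{eq:atilde362}) supply $p_\nu^{n+\ell}\tilde\rho_{\nu,j}$, so the same identity with the roles of $p$ and $\bar p$ interchanged turns the double series into $\overline{\Xi}_n\tilde\rho$.
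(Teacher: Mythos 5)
Your proof is correct, and its overall shape is the same as the paper's: substitute Proposition \ref{prop:a516} into the defining series for $v_n$ and $\tilde v_n$, work block by block, and for $1\le n\le m_0$ split off the finitely many $\rho_{0,n+\ell}$ (resp.\ $\tilde\rho_{0,n+\ell}$) corrections. The genuine difference is in how the scalar series $\sum_{\ell\ge 0}\binom{\ell}{i-1}\binom{n+\ell+j-1}{j-1}p_\mu^{\ell-i+1}\bar p_\nu^{\,n+\ell}$ is evaluated: the paper outsources it to Proposition \ref{prop:sum123} (applied with $i\to i-1$, $j\to j-1$, $n\to n+j-1$), which is proved by applying $(\partial/\partial x)^i(\partial/\partial y)^j$ to $y^n/(1-xy)$ together with Leibniz's rule, whereas you prove the same identity inline by binomial convolutions: Vandermonde on $\binom{n+\ell+j-1}{j-1}$ via the split $(n+i+j-2)+(\ell-i+1)$, then the evaluation $\sum_{k\ge 0}\binom{k+i-1}{i-1}\binom{k}{s}z^k=\binom{i+s-1}{i-1}z^s(1-z)^{-(i+s)}$ with $s=j-1-r$, $z=p_\mu\bar p_\nu$. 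Your bookkeeping checks out and reproduces $\xi_n^{\mu,\nu}(i,j)$ exactly; the only cosmetic quibble is that the inner evaluation is most cleanly justified not by ``a second Vandermonde'' but by the trinomial-revision identity $\binom{k+i-1}{i-1}\binom{k}{s}=\binom{i+s-1}{s}\binom{k+i-1}{i+s-1}$ followed by the negative binomial series, and your observation that the first Vandermonde step is legitimate because $\binom{\ell}{i-1}$ annihilates the terms with $\ell<i-1$ disposes of the one place where care is needed. What your route buys is a self-contained, purely combinatorial derivation of the key identity; what the paper's route buys is a single reusable generating-function lemma (Proposition \ref{prop:sum123}) that also yields Lemma \ref{lem:C625} as the special case $n=0$.
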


We define
\begin{equation}
h^{\dagger}(z) := h(1/\overline{z})^*
\label{eq:hdagger456}
\end{equation}
For $\mu \in \{0,1,\dots,K\}$, $j \in \{1,\dots,m_{\mu}\}$, we put
\begin{equation}
\theta_{{\mu}, j} := - \lim_{z\to p_{\mu}} \frac{1}{(m_{\mu} - j)!} 
\frac{d^{m_{\mu} - j}}{dz^{m_{\mu} - j}} \left\{(z-p_{\mu})^{m_{\mu}} h_{\sharp}(z) h^{\dagger}(z)^{-1}\right\} 
\in \C^{d\times d},
\label{eq:theta456}
\end{equation}
where $p_0:=0$. 
We define the block-diagonal matrix 
$\Theta\in \C^{dM\times dM}$ by
\begin{equation}
\Theta:=
\left(
\begin{matrix}
\Theta_{1}           &    0        &  \cdots &    0             \cr
0                    &  \Theta_{2} &  \cdots &    0             \cr
\vdots               &  \vdots     &  \ddots &   \vdots         \cr
0                    &    0        &  \cdots &  \Theta_{K}    
\end{matrix}
\right),
\label{eq:Theta555}
\end{equation}
where, for $\mu \in \{1,\dots,K\}$, $\Theta_{\mu} \in \C^{dm_{\mu}\times dm_{\mu}}$ is defined by
\begin{equation}
\Theta_{\mu} := \left(
\begin{matrix}
\theta_{{\mu}, 1} & \theta_{{\mu}, 2}  &  \cdots & \theta_{{\mu}, m_{\mu}-1} & \theta_{{\mu}, m_{\mu}} \cr
\theta_{{\mu}, 2} & \theta_{{\mu}, 3}  &  \cdots & \theta_{{\mu}, m_{\mu}}   &                         \cr
\vdots            &  \vdots            &         &                           &                         \cr
\theta_{{\mu}, m_{\mu}-1}   &  \theta_{{\mu}, m_{\mu}} &          &          &                         \cr
\theta_{{\mu}, m_{\mu}}     &                          &          &          & \mbox{\huge 0}
\end{matrix}
\right)
\label{eq:subTheta555}
\end{equation}
using $\theta_{\mu,j}$ in (\ref{eq:theta456}) with (\ref{eq:hdagger456}).

For $n\in \N\cup\{0\}$, we define the block-diagonal matrix 
$\Pi_n\in \C^{dM\times dM}$ by
\begin{equation}
\Pi_n:=
\left(
\begin{matrix}
\Pi_{1, n}           &    0        &  \cdots &    0             \cr
0                    &  \Pi_{2, n} &  \cdots &    0             \cr
\vdots               &  \vdots     &  \ddots &   \vdots         \cr
0                    &    0        &  \cdots &  \Pi_{K, n}    
\end{matrix}
\right),
\label{eq:Pin555}
\end{equation}
where, for $\mu \in \{1,\dots,K\}$ and $n\in\N\cup\{0\}$, $\Pi_{\mu, n} \in \C^{dm_{\mu}\times dm_{\mu}}$ is defined by
\begin{equation}
\Pi_{{\mu},n} := 
\left(
\begin{matrix}
\p_{{\mu}, 1}(n) & \p_{{\mu}, 2}(n) &  \p_{{\mu}, 3}(n)  &     \cdots   &    \p_{{\mu}, m_{\mu}}(n)   \cr
                 & \p_{{\mu}, 1}(n) &  \p_{{\mu}, 2}(n)  &     \cdots   &    \p_{{\mu}, m_{\mu}-1}(n) \cr
                 &                  &  \ddots            &     \ddots   &    \vdots                   \cr
                 &                  &                    &     \ddots   &    \p_{{\mu}, 2}(n)         \cr
\mbox{\huge 0}   &                  &                    &              &    \p_{{\mu}, 1}(n)
\end{matrix}
\right)
\label{eq:subPin555}
\end{equation}
using $\p_{{\mu}, i}(n)$ in (\ref{eq:p362}).

%%%%%%%%%%%%%%%%%%%%%%%%%%%%%%%%%%%%%%%%%%%%%%%%%%%%%%%%%

\section{Closed-form expression for finite predictor coefficients}\label{sec:5}

In this section, we assume that $\{X_k\}$, $h$ and $h_{\sharp}$ are as in Section \ref{sec:4}. 
Thus $\{X_k\}$ is a $d$-variate ARMA process satisfying (\ref{eq:ARMA}) and $K\ge 1$ for $K$ in 
(\ref{eq:hinverse162}). 
Recall the finite predictor coefficients 
$\phi_{n,k}\in \C^{d\times d}$ of the $d$-variate ARMA process 
$\{X_k\}$ from (\ref{eq:forward-phi123}). 
For $n\in \N\cup\{0\}$, we define $G_n, \tilde{G}_n\in \C^{dM\times dM}$ by
\begin{align}
G_n &:=  \Pi_n \Theta \Lambda, 
\label{eq:Gamma333}\\
\tilde{G}_n &:= (\Pi_n \Theta)^* \Lambda^{\top}.
\label{eq:tildeGamma333}
\end{align}

Here is the main theorem of this paper, which gives a closed-form expression for $\phi_{n,j}$.

\begin{theorem}\label{thm:phirep116}
For $n\ge \max(m_0, 1)$ and $j \in \{1,\dots,n\}$, we have
\begin{equation}
\phi_{n,j}
=c_0a_j + c_0 \p_0^{\top} (I_{dM} - \tilde{G}_n G_n)^{-1} (\Pi_n \Theta)^* 
\{\Lambda^{\top} \Pi_n \Theta v_j + \tilde{v}_{n-j+1}\}.
\label{eq:phi-rep3}
\end{equation}
\end{theorem}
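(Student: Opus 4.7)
The plan is to start from the explicit (but non-closed-form) representation of $\phi_{n,j}$ obtained in Theorem 5.4 of \cite{IKP2}, which expresses $\phi_{n,j}$ as $c_0 a_j$ plus a series of correction terms involving the forward and backward AR coefficients $a_{n+k}$, $\tilde a_{j+k}$ weighted by inner-product-type sums. The goal is to collapse these infinite sums into finite-dimensional matrix expressions by exploiting the rational structure ensured by condition \eqref{eq:C}.

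First, I would use Proposition \ref{prop:a516} to write, for indices beyond $m_0$, the AR coefficients $a_n$ and $\tilde a_n$ in the factored form $a_n = \p_n^{\top} \rho$ and $\tilde a_n = \bar{\p}_n^{\top} \tilde\rho$, with $\p_n$ containing the shifted quantities $\binom{n+j-1}{j-1}\bar p_\mu^n$. Sums of the form $\sum_{\ell} \p_\ell a_{n+\ell}$ and $\sum_\ell \bar{\p}_\ell \tilde a_{n+\ell}$, which occur as ``inner-product tails'' in the IKP2 representation, then collapse exactly to the finite-dimensional vectors $v_n = \Xi_n \rho$ and $\tilde v_n = \overline{\Xi}_n \tilde\rho$ given by Lemma \ref{lem:v516}. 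Here the pole-correspondence result, Theorem \ref{thm:hsharpinv123}, is essential: it guarantees that $h_\sharp^{-1}$ uses exactly the same poles $p_\mu$ with the same multiplicities $m_\mu$ as $h^{-1}$, so that both the forward and backward sides fit into the same $dM$-dimensional setup and the single matrix $\Lambda = \sum_\ell \p_\ell \p_\ell^*$ plays the role of a Gram matrix for both simultaneously.

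Next, I would handle the shifts by $n$ produced when the past/future reflection is iterated. The identity $\p_{n+\ell} = \Pi_n^\top \p_\ell$, which follows from the binomial recursion satisfied by $\binom{n+\ell+j-1}{j-1}\bar p_\mu^{n+\ell}$ and explains the Toeplitz definition \eqref{eq:subPin555}, lets one factor out $\Pi_n$ from every shifted sum. Together with the residue matrix $\Theta$ built from the quantities $\theta_{\mu,j}$ of \eqref{eq:theta456}, which encode the conversion between the forward factor $h$ and the backward factor $h_\sharp$ at each pole, this bundles one ``round trip'' between past and future into the operator $G_n = \Pi_n \Theta \Lambda$ on the forward side and $\tilde G_n = (\Pi_n \Theta)^* \Lambda^\top$ on the backward side. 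The infinite series of corrections in the IKP2 formula then takes the shape of a Neumann series in $\tilde G_n G_n$ (one factor per round trip), which sums to $(I_{dM} - \tilde G_n G_n)^{-1}$, producing exactly the compact form \eqref{eq:phi-rep3}. The boundary identity \eqref{eq:rho-h555} may also be needed at the seam where $h$ and $h_\sharp$ are matched.

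The main obstacle, I expect, is the bookkeeping in passing from infinite series to finite matrices: one must verify that the residue coefficients $\theta_{\mu,j}$ assemble into precisely the shifted Hankel block $\Theta_\mu$ of \eqref{eq:subTheta555}, and that the boundary cases $1 \le j \le m_0$ in Proposition \ref{prop:a516} and Lemma \ref{lem:v516} do not contribute any leftover terms under the hypothesis $n \ge \max(m_0,1)$. Convergence of the Neumann series, which amounts to a spectral-radius bound on $\tilde G_n G_n$, should ultimately follow from the fact that the associated finite-past projection operator is a contraction, but making this rigorous within the matrix framework — and in particular establishing invertibility of $I_{dM} - \tilde G_n G_n$ for all $n \ge \max(m_0,1)$ — is the delicate piece that I would expect to require the most work.
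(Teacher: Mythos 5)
Your plan follows essentially the same route as the paper: starting from Theorem 5.4 of \cite{IKP2}, collapsing the infinite correction series via the pole structure (Proposition \ref{prop:a516}, Lemma \ref{lem:v516}, the factorization of the phase-function coefficients as $\beta_{n+k+\ell+1}^{*}=\p_{\ell}^{\top}\Pi_n\Theta\p_k$, which is exactly your identity $\p_{n+\ell}=\Pi_n^{\top}\p_{\ell}$ combined with the residue matrix $\Theta$), and summing the resulting Neumann series in $\tilde{G}_nG_n$. The one step you flag as delicate, invertibility of $I_{dM}-\tilde{G}_nG_n$, is handled in the paper exactly as you anticipate, by the contraction property $\Vert\mathcal{H}_n\Vert<1$ (Treil--Volberg/Muckenhoupt) together with the positive definiteness of $\Lambda$ (Lemma \ref{lem:Ginv123}); note only that (\ref{eq:rho-h555}) is not needed here, it enters in Theorem \ref{thm:Bax323}.
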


Recall the assumption for Theorem \ref{thm:phirep116} from the beginning 
of this section; $\{X_k\}$ in Theorem \ref{thm:phirep116} is a general $d$-variate ARMA process 
that is not an AR process (see Remark \ref{rem:AR612} above). 
We remark that, from Lemma \ref{lem:Ginv123} below, 
$I_{dM} - \tilde{G}_n G_n$ is invertible for $n\ge m_0$.

\begin{corollary}\label{cor:phirep117}
If $m_0=0$, then, for $n\ge 1$ and $j \in \{1,\dots,n\}$, we have
\begin{equation}
\phi_{n,j}
=c_0a_j + c_0 \p_0^{\top} (I_{dM} - \tilde{G}_n G_n)^{-1} (\Pi_n \Theta)^* 
\{\Lambda^{\top} \Pi_n \Theta \Xi_j \rho + \overline{\Xi}_{n-j+1} \tilde{\rho}\}.
\label{eq:phi-rep331}
\end{equation}
\end{corollary}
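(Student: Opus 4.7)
The plan is to deduce Corollary \ref{cor:phirep117} as a direct specialization of Theorem \ref{thm:phirep116} by using the closed-form expressions for $v_n$ and $\tilde{v}_n$ supplied by Lemma \ref{lem:v516}. Since this is purely a substitution, I expect no real obstacle; the only thing to check carefully is that the index ranges match.

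First, I would observe that when $m_0 = 0$ we have $\max(m_0, 1) = 1$, so Theorem \ref{thm:phirep116} is applicable for every $n \ge 1$ and every $j \in \{1, \dots, n\}$. For such $j$, both indices $j$ and $n - j + 1$ satisfy $j \ge 1 = m_0 + 1$ and $n - j + 1 \ge 1 = m_0 + 1$, so the regime (\ref{eq:v222})--(\ref{eq:vtilde222}) of Lemma \ref{lem:v516} applies and gives
\[
v_j = \Xi_j \rho, \qquad \tilde{v}_{n-j+1} = \overline{\Xi}_{n-j+1}\, \tilde{\rho}.
\]
(The supplementary terms $\sum_{\ell=0}^{m_0 - n} \p_\ell \rho_{0, n+\ell}$ appearing in (\ref{eq:v333})--(\ref{eq:vtilde333}) are simply absent here because the convention $\sum_{\ell=1}^{0} = 0$ forces them to vanish when $m_0 = 0$, so no case distinction is even needed.)

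Substituting these two identities into the right-hand side of (\ref{eq:phi-rep3}) in place of $v_j$ and $\tilde{v}_{n-j+1}$ yields
\[
\phi_{n,j} = c_0 a_j + c_0 \p_0^{\top} (I_{dM} - \tilde{G}_n G_n)^{-1} (\Pi_n \Theta)^* \{\Lambda^{\top} \Pi_n \Theta\, \Xi_j \rho + \overline{\Xi}_{n-j+1}\, \tilde{\rho}\},
\]
which is precisely (\ref{eq:phi-rep331}). The invertibility of $I_{dM} - \tilde{G}_n G_n$ for $n \ge m_0 = 0$ is inherited from Theorem \ref{thm:phirep116} (via Lemma \ref{lem:Ginv123} cited there), so no further verification is required and the corollary follows.
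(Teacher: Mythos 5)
Your proposal is correct and is exactly the paper's argument: the corollary is obtained by substituting $v_j=\Xi_j\rho$ and $\tilde{v}_{n-j+1}=\overline{\Xi}_{n-j+1}\tilde{\rho}$ from Lemma \ref{lem:v516} (valid since $m_0=0$ makes $j\ge m_0+1$ and $n-j+1\ge m_0+1$) into formula (\ref{eq:phi-rep3}) of Theorem \ref{thm:phirep116}. Your extra care with the index ranges and the applicability of Theorem \ref{thm:phirep116} for $n\ge 1$ is exactly what the paper's one-line proof leaves implicit.
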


\begin{proof}[\bf Proof]
The corollary follows immediately from Theorem \ref{thm:phirep116} and Lemma \ref{lem:v516}.
\end{proof}

The matrices $a_j$, $\p_0$, $\Pi_n$, and $\Theta$ in (\ref{eq:phi-rep3}) are 
given by the closed-form expressions (\ref{eq:a363}) and (\ref{eq:a364}), (\ref{eq:p000}), 
(\ref{eq:Pin555}) with (\ref{eq:subPin555}), and (\ref{eq:Theta555}) with (\ref{eq:subTheta555}), 
respectively. 
The closed-form expression of $\Lambda$, $v_n$ and $\tilde{v}_n$ 
are given by Lemmas \ref{lem:C625} and \ref{lem:v516}, and those of $G_n$ and $\tilde{G}_n$ by 
(\ref{eq:Gamma333}) and (\ref{eq:tildeGamma333}), respectively. 
Moreover, the matrix $c_0$ is given by 
$c_0=h(0)= - \{\rho_{0} + \sum_{{\mu}=1}^{K} \sum_{j=1}^{m_{\mu}} \rho_{{\mu}, j}\}^{-1}$. 
Therefore, (\ref{eq:phi-rep3}) gives a complete 
closed-form expression for $\phi_{n,j}$. 
Notice that the sizes of all the matrices are fixed and independent of $n$.

\begin{remark}\label{rem:phi333}
Notice that $c_0a_j=\phi_j$ in (\ref{eq:phi-rep3}) 
is the infinite predictor coefficient.
\end{remark}

\begin{exmp}\label{exmp:simple333}
Suppose that $m_{\mu}=1$, $\mu \in \{1,\dots,K\}$ and $m_0=0$, 
that is,
\[
h(z)^{-1} = - \rho_{0} - \sum_{{\mu}=1}^{K} \frac{1}{1-\barp_{\mu}z}\rho_{{\mu}, 1}, \qquad 
h_{\sharp}(z)^{-1} = - \rho^{\sharp}_{0} - 
\sum_{{\mu}=1}^{K} \frac{1}{1 - \barp_{\mu} z} \rho^{\sharp}_{{\mu}, 1}.
\]
Then, Corollary \ref{cor:phirep117} holds with 
$a_j = \sum_{{\mu}=1}^{K} \barp_{\mu}^j\rho_{{\mu}, 1}$ for $j\ge 1$, 
$\p_0^{\top} = (I_d, \dots, I_d)\in \C^{d \times dK}$,
\begin{align*}
\Theta &= \left(
\begin{matrix}
p_1 h_{\sharp}(p_1) \rho_{1, 1}^* &    0                               &  \cdots &    0                              \cr
0                                 &  p_2 h_{\sharp}(p_2) \rho_{2, 1}^* &  \cdots &    0                              \cr
\vdots                            &  \vdots                            &  \ddots &   \vdots                          \cr
0                                 &    0                               &  \cdots & p_K h_{\sharp}(p_K) \rho_{K, 1}^*
\end{matrix}
\right)
\in \C^{dK\times dK},\\
\Lambda &= \left(
\begin{matrix}
\frac{1}{1-p_{1}\overline{p}_{1}} I_d & \frac{1}{1-p_{1}\overline{p}_{2}} I_d 
&  \cdots  & \frac{1}{1-p_{1}\overline{p}_{K}} I_d  \cr
\frac{1}{1-p_{2}\overline{p}_{1}} I_d  &  \frac{1}{1-p_{2}\overline{p}_{2}} I_d  
&  \cdots  & \frac{1}{1-p_{2}\overline{p}_{K}} I_d  \cr
\vdots               &  \vdots           &  \ddots       & \vdots          \cr
\frac{1}{1-p_{K}\overline{p}_{1}} I_d & \frac{1}{1-p_{K}\overline{p}_{2}} I_d 
&  \cdots  & \frac{1}{1-p_{K}\overline{p}_{K}} I_d  \cr
\end{matrix}
\right)
\in \C^{dK\times dK},\\
\Pi_n &= \left(
\begin{matrix}
p_1^n I_d  &    0        &  \cdots &    0             \cr
0          & p_2^n I_d   &  \cdots &    0             \cr
\vdots     &  \vdots     &  \ddots &   \vdots         \cr
0          &    0        &  \cdots &   p_K^n I_d  
\end{matrix}
\right)
\in \C^{dK\times dK},\qquad n\ge 0,\\
\Xi_n &= \left(
\begin{matrix}
\frac{\overline{p}_{1}^{n}}{1-p_{1}\overline{p}_{1}} I_d & \frac{\overline{p}_{2}^{n}}{1-p_{1}\overline{p}_{2}} I_d 
&  \cdots  & \frac{\overline{p}_{K}^{n}}{1-p_{1}\overline{p}_{K}} I_d  \cr
\frac{\overline{p}_{1}^{n}}{1-p_{2}\overline{p}_{1}} I_d  &  \frac{\overline{p}_{2}^{n}}{1-p_{2}\overline{p}_{2}} I_d  
&  \cdots  & \frac{\overline{p}_{K}^{n}}{1-p_{2}\overline{p}_{K}} I_d  \cr
\vdots               &  \vdots           &  \ddots       & \vdots          \cr
\frac{\overline{p}_{1}^{n}}{1-p_{K}\overline{p}_{1}} I_d & \frac{\overline{p}_{2}^{n}}{1-p_{K}\overline{p}_{2}} I_d 
&  \cdots  & \frac{\overline{p}_{K}^{n}}{1-p_{K}\overline{p}_{K}} I_d  \cr
\end{matrix}
\right)
\in \C^{dK\times dK}, \qquad n\ge 1,\\
\rho
&=(\rho_{1, 1}^{\top}, \rho_{2, 1}^{\top}, \dots, \rho_{K, 1}^{\top})^{\top} \in \C^{dK\times d}, 
\qquad 
\tilde{\rho}
=\left(\overline{\rho^{\sharp}_{1, 1}}, \overline{\rho^{\sharp}_{2, 1}}, 
\dots, \overline{\rho^{\sharp}_{K, 1}}\right)^{\top}
\in \C^{dK\times d}
\end{align*}
and $G_n =  \Pi_n \Theta \Lambda$, $\tilde{G}_n = (\Pi_n \Theta)^* \Lambda^{\top} \in \C^{dK\times dK}$.
\end{exmp}

\begin{remark}\label{rem:O(n)333}
We define the block-diagonal matrix 
$J\in \C^{dM\times dM}$ by
\[
J:=
\left(
\begin{matrix}
J_{1}                &    0       &  \cdots &    0             \cr
0                    &  J_{2}     &  \cdots &    0             \cr
\vdots               &  \vdots    &  \ddots &   \vdots         \cr
0                    &    0       &  \cdots &  J_{K}    
\end{matrix}
\right),
\]
where, for $\nu \in \{1,\dots,K\}$, $J_{\nu} \in \C^{dm_{\nu}\times dm_{\nu}}$ is defined by
\[
J_{\nu} := 
\left(
\begin{matrix}
\overline{p}_{\nu}I_d & I_d                   &          &            &  \mbox{\huge 0}           \cr
                      & \overline{p}_{\nu}I_d &  I_d     &            &                           \cr
                      &                       &  \ddots  &   \ddots   &                           \cr
                      &                       &          &   \ddots   &    I_d                    \cr
\mbox{\huge 0}        &                       &          &            &  \overline{p}_{\nu} I_d
\end{matrix}
\right), \quad m_{\nu}\ge 2, 
\qquad 
:= \overline{p}_{\nu}I_d, \quad m_{\nu}=1.
\]
Then it is easy to see that 
$\Xi_{n+1} = \Xi_n J$ for $n\in\N$. 
By this recursion, we can compute $\Xi_1,\dots,\Xi_n$ in $O(n)$ arithmetic operations. 
The other matrices in (\ref{eq:phi-rep3}) and (\ref{eq:phi-rep331}) can also be computed in $O(n)$ operations. 
Therefore, we see that the complexity of the algorithm to compute $\phi_{n,1},\dots,\phi_{n,n}$ 
that is provided by Theorem \ref{thm:phirep116} 
or Corollary \ref{cor:phirep117} is only $O(n)$, which is the best possible. 
Notice that $(\phi_{n,n},\phi_{n,n-1}\dots,\phi_{n,1})$ is the solution to the Yule--Walker equation
\[
(\phi_{n,n},\phi_{n,n-1},\dots,\phi_{n,1}) T_n(w)  = (\gamma(-n), \gamma(-n+1),\dots,\gamma(-1))
\]
or
\[
T_n(w) (\phi_{n,n},\phi_{n,n-1},\dots,\phi_{n,1})^* = (\gamma(-n), \gamma(-n+1),\dots,\gamma(-1))^*,
\]
where $T_n(w)$ is 
the truncated block Toeplitz matrix defined by
\[
T_n(w)
:=\left(
\begin{matrix}
\gamma(0)   & \gamma(-1)  & \cdots & \gamma(-n+1)\cr
\gamma(1)   & \gamma(0)   & \cdots & \gamma(-n+2)\cr
\vdots      & \vdots      & \ddots & \vdots      \cr
\gamma(n-1) & \gamma(n-2) & \cdots & \gamma(0)
\end{matrix}
\right)
\in \C^{dn\times dn}.
\]
Also notice that the multivariate Durbin--Levinson recursion solves the Yule-Walker equation 
in $O(n^2)$ time (see, e.g., Brockwell and Davis \cite{BD}). 
Algorithms for Toeplitz linear systems that run faster than $O(n^2)$ 
are called superfast; see Xi et al.\ \cite{XXCB} and the references therein.
\end{remark}

\begin{remark}
From the discussions in Remark \ref{rem:O(n)333}, we are naturally led to the problem of finding 
linear-time algorithms to compute the solution $x\in \C^{dn\times d}$ of the general block Toeplitz system 
$T_n(w) x=b$ for $b\in\C^{dn\times d}$ 
and $w$ satisfying (\ref{eq:farima529}) with (\ref{eq:C}). 
This problem will be solved in \cite{I}.
\end{remark}

\begin{remark}
One possible application of Theorem \ref{thm:phirep116} is model fitting. 
More precisely, suppose that we are given a dataset $x_1,\dots,x_N$ as a realization of the underlying process $\{X_k\}$. 
Then, for suitable $n$, we search for the parameters of the ARMA model that minimize the least squares
error 
$\sum_{m=n+1}^{N} \vert x_m - \sum_{k=1}^n \phi_{n,k} x_{m-k} \vert^2$, using Theorem \ref{thm:phirep116}. 
In this way, we simultaneously fit the ARMA model to the data and estimate the predictor 
coefficients $\phi_{n,1},\dots,\phi_{n,n}$, without estimating the autocovariance function $\gamma$. 
The validity of this method will be discussed in future work.
\end{remark}

%%%%%%%%%%%%%%%%%%%%%%%%%%%%%%%%%%%%%%%%%%%%%%%%%%%%%%%%%%%%%

\section{Application}\label{sec:6}

We continue to assume that 
$\{X_k\}$ is a $d$-variate ARMA process satisfying (\ref{eq:ARMA}) and $K\ge 1$ for $K$ in 
(\ref{eq:hinverse162}). 
In this section, we further assume
\begin{equation}
\vert p_1\vert > \max \{\vert p_{\mu}\vert: \mu \in \{2,\dots,K\}\},
\label{eq:p1max}
\end{equation}
and apply Theorem \ref{thm:phirep116} above to 
determine the asymptotic behavior of $\sum_{j=1}^n \Vert \phi_{n,j} - \phi_j \Vert$ 
as $n\to\infty$. 
We write $s_n \sim t_n$ as $n\to\infty$ to mean that $\lim_{n\to\infty} s_n/t_n = 1$. 

\begin{theorem}\label{thm:Bax323}
We assume $(\ref{eq:p1max})$. 
Then 
\begin{equation}
\sum_{j=1}^n \Vert \phi_{n,j} - \phi_j \Vert 
\ \sim \ 
\frac{C_1}{(m_1 - 1)!} n^{m_1 - 1} \vert p_1 \vert^{n}\quad \mbox{as $n\to\infty$},
\label{eq:Baxeq111}
\end{equation}
where $C_1$ is a positive constant given by 
$C_1 := 
\sum_{k=1}^{\infty} \Vert c_0 h(p_1)^* \rho_{1, m_1}^{\sharp} H \tilde{v}_{k} \Vert$ 
with 
$H := \left( I_d, 0, \dots, 0 \right) \in \C^{d\times dM}$.
\end{theorem}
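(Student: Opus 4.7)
By the closed-form expression (\ref{eq:phi-rep3}) and Remark \ref{rem:phi333},
\[
\phi_{n,j} - \phi_j = c_0\,\p_0^\top (I_{dM} - \tilde{G}_n G_n)^{-1}(\Pi_n\Theta)^*\bigl\{\Lambda^\top \Pi_n\Theta\, v_j + \tilde{v}_{n-j+1}\bigr\}.
\]
The plan is to extract the leading asymptotics of this expression uniformly in $j$, and then to sum. Every entry of $\Pi_n$ is of the form $\binom{n}{i-1} p_\mu^{n-i+1} I_d$, so under (\ref{eq:p1max}) one has $\Vert\Pi_n\Vert = O(n^{m_1-1}\vert p_1\vert^n)$; hence $\Vert G_n\Vert,\Vert\tilde{G}_n\Vert$ decay at the same rate and $(I_{dM} - \tilde{G}_n G_n)^{-1} = I_{dM} + O(n^{2(m_1-1)}\vert p_1\vert^{2n})$. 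The piece containing $\Lambda^\top \Pi_n\Theta\, v_j$ carries two factors of $\Pi_n$ and is therefore of size $O(n^{2(m_1-1)}\vert p_1\vert^{2n})\Vert v_j\Vert$; since $\Vert v_j\Vert$ is summable in $j$ (by Lemma \ref{lem:v516} and the exponential decay of the AR coefficients), its total contribution is $O(n^{2(m_1-1)}\vert p_1\vert^{2n})$, which is negligible against the target rate $n^{m_1-1}\vert p_1\vert^n$.

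The dominant contribution is therefore $c_0\,\p_0^\top (\Pi_n\Theta)^*\tilde{v}_{n-j+1}$. Because $\Pi_n\Theta$ is block-diagonal and $\vert p_1\vert$ is strictly largest by (\ref{eq:p1max}), only the $\mu=1$ block $\Pi_{1,n}\Theta_1$ matters at leading order; the others shed a factor $(\max_{\nu\ge 2}\vert p_\nu\vert/\vert p_1\vert)^n\to 0$. Within $\Pi_{1,n}\Theta_1$, the highest-order entry in $n$ sits at the top-left block position and equals, to leading order, $\p_{1,m_1}(n)\,\theta_{1,m_1}\sim \frac{n^{m_1-1}}{(m_1-1)!}\,p_1^{n-m_1+1}\,\theta_{1,m_1}$, while the remaining entries of $\Pi_{1,n}\Theta_1$ lose a factor $O(n^{-1})$. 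Taking Hermitian conjugate, applying $\p_0^\top$ on the left (and using $\p_0^\top H^\top=I_d$), and noting that the contraction with $\tilde{v}_{n-j+1}$ is dominated by its first $d$-block $H\tilde{v}_{n-j+1}$, one obtains
\[
c_0\,\p_0^\top(\Pi_n\Theta)^*\,\tilde{v}_{n-j+1} \;\sim\; \frac{n^{m_1-1}}{(m_1-1)!}\,\overline{p}_1^{\,n-m_1+1}\,c_0\,\theta_{1,m_1}^*\,H\,\tilde{v}_{n-j+1}
\]
uniformly in $j\in\{1,\dots,n\}$.

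A residue computation from (\ref{eq:theta456})--(\ref{eq:hdagger456}) pins down the prefactor: writing $h^\dagger(z)^{-1}=[h(1/\overline{z})^{-1}]^*$ and using (\ref{eq:hinverse162}), the principal part of $h^\dagger(z)^{-1}$ at $z=p_1$ comes solely from the $1/(1-\overline{p}_1 w)^{m_1}$-term of $h(w)^{-1}$ evaluated at $w=1/\overline{p}_1$, yielding $\theta_{1,m_1}=h_\sharp(p_1)\rho_{1,m_1}^* p_1^{m_1}$; applying (\ref{eq:rho-h555}) then gives $\theta_{1,m_1}^* = \overline{p}_1^{\,m_1}\,h(p_1)^*\rho_{1,m_1}^\sharp$, bringing the multiplier into the form $c_0 h(p_1)^*\rho_{1,m_1}^\sharp$ that appears in the definition of $C_1$. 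The change of variables $k=n-j+1$, followed by dominated convergence (justified by the geometric decay of $\Vert\tilde{v}_k\Vert$, inherited from the rational structure of $\tilde{h}$), finally replaces $\sum_{k=1}^n$ by $\sum_{k=1}^\infty=C_1$ and yields (\ref{eq:Baxeq111}).

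The main obstacle is to make the pointwise asymptotic of $c_0\,\p_0^\top(\Pi_n\Theta)^*\tilde{v}_{n-j+1}$ \emph{uniform} in $j$ with an error whose sum in $j$ is genuinely $o(n^{m_1-1}\vert p_1\vert^n)$. This requires separately controlling (i) the subleading entries of $\Pi_{1,n}\Theta_1$, which lose a factor of $n^{-1}$ relative to the leading one, and (ii) the contributions of the $\mu\ge 2$ blocks, which lose a factor $(\vert p_\nu\vert/\vert p_1\vert)^n$. In both cases the remainders are summable against the geometric decay of $\Vert\tilde{v}_{n-j+1}\Vert$, but the bookkeeping must be done carefully so that the summed error is asymptotically dominated by the leading term $C_1 n^{m_1-1}\vert p_1\vert^n/(m_1-1)!$.
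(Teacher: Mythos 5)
Your route is essentially the paper's: start from Theorem \ref{thm:phirep116}, discard the correction $(I_{dM}-\tilde{G}_nG_n)^{-1}-I_{dM}$ and the $\Lambda^{\top}\Pi_n\Theta v_j$ term because each carries an extra factor of $\Pi_n$, isolate the top-left entry $p_{1,m_1}(n)\theta_{1,m_1}$ of the $\mu=1$ block of $\Pi_n\Theta$, convert $\theta_{1,m_1}^*$ into $\overline{p}_1^{\,m_1}h(p_1)^*\rho_{1,m_1}^{\sharp}$ via (\ref{eq:theta515}) and (\ref{eq:rho-h555}), reindex $k=n-j+1$, and sum. The paper does the same thing, but it disposes of the uniformity issue you flag at the end more cleanly: it divides the whole sum by $p_{1,m_1}(n)$, observes that $(1/p_{1,m_1}(n))\Pi_n\to\Delta$ (a matrix with a single nonzero block) while $\Pi_n\to 0$ and $(I_{dM}-\tilde{G}_nG_n)^{-1}\to I_{dM}$, and then passes to the limit term by term in $k$ by dominated convergence, the dominating sequence being the geometrically decaying $\Vert\tilde{v}_k\Vert$. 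That device replaces the uniform-in-$j$ expansion with summable errors that you describe as the main remaining bookkeeping.

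The genuine gap is that you never prove $C_1>0$. This is part of the statement, and it is also needed for (\ref{eq:Baxeq111}) to be an asymptotic equivalence: your argument produces $\sum_{j}\Vert\phi_{n,j}-\phi_j\Vert$ as (leading factor)$\times(S_n+o(1))$ with $S_n$ tending to a constant multiple of $C_1$, and dividing by the right-hand side of (\ref{eq:Baxeq111}) gives limit $1$ only if $C_1\neq 0$; if $C_1=0$ the conclusion degenerates to an $o(n^{m_1-1}\vert p_1\vert^{n})$ bound and the stated $\sim$ fails. Positivity is not automatic, since every term $\Vert c_0h(p_1)^*\rho_{1,m_1}^{\sharp}H\tilde{v}_k\Vert$ could in principle vanish. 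The paper devotes the first half of its proof to exactly this point: it writes $H\tilde{v}_k=\sum_{\nu,j}\overline{\xi}_k^{1,\nu}(1,j)(\rho_{\nu,j}^{\sharp})^*$ via Lemma \ref{lem:v516}, shows under (\ref{eq:p1max}) that the $(\nu,j)=(1,m_1)$ term dominates as $k\to\infty$, and uses the invertibility of $c_0h(p_1)^*$ together with $\rho_{1,m_1}^{\sharp}(\rho_{1,m_1}^{\sharp})^*\neq 0$ to conclude $C_{1,k}>0$ for all large $k$; you need this (or an equivalent argument) to complete the proof. A minor further remark: carried out literally, your constants give the power $\vert p_1\vert^{n+1}$ (from $\vert p_{1,m_1}(n)\vert\cdot\vert p_1\vert^{m_1}$) rather than $\vert p_1\vert^{n}$; the paper's own final lines are equally loose about this single power of $\vert p_1\vert$, so it is a shared bookkeeping issue rather than a defect of your method, but it is worth pinning down if you write the argument in full.
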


\begin{proof}[\bf Proof]
First we show that the constant $C_1$ is in $(0, \infty)$. 
We define 
$C_{1,k} := \Vert c_0 h(p_1)^* \rho_{1, m_1}^{\sharp} H \tilde{v}_{k} \Vert$ for $k \in \N$, 
so that 
$C_1 := \sum_{k=1}^{\infty} C_{1,k}$ 
holds. 
Then, the sum converges since $C_{1,k}$ decays exponentially fast 
as $k\to\infty$. Therefore, it is enough to show that $C_{1,k}>0$ for $k$ large enough. 
By Lemma \ref{lem:v516}, we have, for $k\ge m_0 + 1$,
\[
\begin{aligned}
C_{1,k}
&= \Vert c_0 h(p_1)^* \rho_{1, m_1}^{\sharp} H \overline{\Xi}_k \tilde{\rho} \Vert
= \left\Vert 
c_0 h(p_1)^* \rho_{1, m_1}^{\sharp} 
\left( \sum\nolimits_{\nu = 1}^K \sum\nolimits_{j = 1}^{m_{\nu}} \overline{\xi}_k^{1,{\nu}}(1, j) (\rho_{\nu,j}^{\sharp})^* 
\right)
\right\Vert \\
&= \left\Vert 
\sum\nolimits_{\nu = 1}^K \sum\nolimits_{j = 1}^{m_{\nu}} \overline{\xi}_k^{1,{\nu}}(1, j)
c_0 h(p_1)^* \rho_{1, m_1}^{\sharp} (\rho_{\nu,j}^{\sharp})^* 
\right\Vert 
\ge 
k^{m_1 - 1} \vert p_1 \vert^k (A_k - B_k),
\end{aligned}
\]
where
\begin{align*}
A_k &:= \left\Vert 
(k^{m_1 - 1} \vert p_1 \vert^{k})^{-1}
\overline{\xi}_k^{1, 1}(1, m_1)
c_0 h(p_1)^* \rho_{1, m_1}^{\sharp} (\rho_{1, m_1}^{\sharp})^* 
\right\Vert,\\
B_k &:= \left\Vert 
\sum\nolimits_{(\nu, j) \neq (1, m_1)} 
(k^{m_1 - 1} \vert p_1 \vert^{k})^{-1}
\overline{\xi}_k^{1,{\nu}}(1, j)
c_0 h(p_1)^* \rho_{1, m_1}^{\sharp} (\rho_{\nu,j}^{\sharp})^* 
\right\Vert.
\end{align*}
The main term in 
\[
\overline{\xi}_k^{1,1}(1, m_1)
=\sum_{r=0}^{m_1-1} 
\binom{k+m_1-1}{r} 
\frac{\overline{p}_1^{m_1 - r} p_1^{k+m_1-1-r}}{(1-\vert p_1\vert^2)^{m_1-r}} I_d
\]
is $\binom{k+m_1-1}{m_1-1} p_1^{k}(1-\vert p_1\vert^2)^{-1} I_d$ for $r = m_1 - 1$ 
and we have 
\[
\lim_{k \to \infty} (k^{m_1 - 1} 
\vert p_1 \vert^{k})^{-1}\overline{\xi}_k^{1, 1}(1, m_1) 
= 
\{(m_1 - 1)! (1-\vert p_1\vert^2)\}^{-1} I_d,
\]
so that $\lim_{k \to \infty} A_k = A_{\infty}$, where 
$A_{\infty} 
:= 
\{(m_1 - 1)! (1-\vert p_1\vert^2)\}^{-1} 
\Vert 
c_0 h(p_1)^* \rho_{1, m_1}^{\sharp} (\rho_{1, m_1}^{\sharp})^* 
\Vert$. 
Since $c_0 h(p_1)^*$ is invertible and $\rho_{1, m_1}^{\sharp} (\rho_{1, m_1}^{\sharp})^* \neq 0$, 
we have $A_{\infty} >0$. 
On the other hand, (\ref{eq:p1max}) implies 
\[
\lim_{k \to \infty} 
(k^{m_1 - 1} \vert p_1 \vert^{k})^{-1}
\overline{\xi}_k^{1,{\nu}}(1, j) 
= 0
\]
for $(\nu, j) \neq (1, m_1)$. Hence $\lim_{k \to \infty} B_k = 0$. 
Combining, we see that $C_{1,k}>0$ for $k$ large enough, as desired.

Next we prove (\ref{eq:Baxeq111}). Recall $p_{{\mu},i}(n)$ and $\p_{{\mu}, i}(n)$ from 
(\ref{eq:p-func333}) and (\ref{eq:p362}), respectively. 
Since (\ref{eq:p1max}) implies
\[
\lim_{n\to\infty} \frac{1}{p_{1,m_1}(n)} \p_{{\mu}, i}(n)
=
\begin{cases}
I_d, & \mu=1, \  i=m_1, \\
0,   & \mbox{otherwise},
\end{cases}
\]
we have 
$\lim_{n\to\infty} (1/p_{1,m_1}(n)) \Pi_n = \Delta$, 
where $\Delta\in \C^{dM\times dM}$ is defined by
\[
\Delta
:=\left(
\begin{matrix}
\Delta_1  &    0     &  \cdots &    0      \cr
0         &    0     &  \cdots &    0      \cr
\vdots    &  \vdots  &  \ddots &   \vdots  \cr
0         &    0     &  \cdots &    0
\end{matrix}
\right),
\qquad 
\Delta_1
:=\left(
\begin{matrix}
0      &   \cdots    &  0       &   I_d     \cr
0      &   \cdots    &  0       &    0      \cr
\vdots &             &  \vdots  &   \vdots  \cr
0      &    \cdots   &  0       &    0
\end{matrix}
\right)
\in \C^{dm_1\times dm_1}.
\]
Hence, by Theorem \ref{thm:phirep116} and the dominated convergence theorem, we get
\[
\begin{aligned}
&\frac{1}{\vert p_{1,m_1}(n)\vert} \sum_{j=1}^n \Vert \phi_{n,j} - \phi_j \Vert
=\frac{1}{\vert p_{1,m_1}(n)\vert} \sum_{j=1}^n 
\Vert  
c_0 \p_0^{\top} (I_{dM} - \tilde{G}_n G_n)^{-1} (\Pi_n \Theta)^* 
\{\Lambda^{\top} \Pi_n \Theta v_j + \tilde{v}_{n-j+1}\}
\Vert                                                                             \\
&\qquad = \sum_{k=1}^n 
\left\Vert  
c_0 \p_0^{\top} (I_{dM} - \tilde{G}_n G_n)^{-1} \left(\frac{1}{p_{1,m_1}(n)}\Pi_n \Theta\right)^* 
\{\Lambda^{\top} \Pi_n \Theta v_{n+1-k} + \tilde{v}_{k}\}
\right\Vert\\
&\qquad \to \quad 
\sum_{k=1}^{\infty} 
\Vert  
c_0 \p_0^{\top}  (\Delta \Theta)^* \tilde{v}_{k}
\Vert, \quad 
n \to \infty.
\end{aligned}
\]
By simple calculations, we have
\[
\Delta \Theta
=\left(
\begin{matrix}
(p_1)^{m_1} h_{\sharp}(p_1) \rho_{1, m_1}^*  &    0     &  \cdots &    0      \cr
0                                            &    0     &  \cdots &    0      \cr
\vdots                                       &  \vdots  &  \ddots &   \vdots  \cr
0                                            &    0     &  \cdots &    0
\end{matrix}
\right)
\in \C^{dM\times dM},
\]
so that 
$\p_0^{\top}  (\Delta \Theta)^*
= (p_1)^{m_1} \rho_{1, m_1} h_{\sharp}(p_1)^* H$. 
However, (\ref{eq:rho-h555}) implies that 
$\rho_{1,m_1} h_{\sharp}(p_1)^* = h(p_1)^* \rho_{1,m_1}^{\sharp}$. Hence, we see that 
$\sum_{k=1}^{\infty} 
\Vert  
c_0 \p_0^{\top}  (\Delta \Theta)^* \tilde{v}_{k}
\Vert
= C_1$. 
Thus (\ref{eq:Baxeq111}) follows. 
\end{proof}

\begin{corollary}\label{cor:appl6475}
We assume $(\ref{eq:p1max})$. 
Then 
\begin{equation}
\lim_{n \to \infty} 
\frac{\sum_{j=1}^n \Vert \phi_{n,j} - \phi_j \Vert}
{\sum_{k=n+1}^{\infty}\Vert \phi_k\Vert}
= 
\frac{(1 - \vert p_1 \vert) C_1}{\vert p_1 \vert \cdot \Vert c_0 \rho_{1, m_1}\Vert}.
\label{eq:Baxeq222}
\end{equation}
\end{corollary}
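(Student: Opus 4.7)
The plan is to combine Theorem \ref{thm:Bax323}, which provides the numerator asymptotic, with a direct asymptotic analysis of the denominator $\sum_{k=n+1}^{\infty}\Vert \phi_k\Vert$ coming from Proposition \ref{prop:a516}.

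First I would pin down the leading behavior of $\Vert \phi_n\Vert$ as $n\to\infty$. By (\ref{eq:ipc574}) and Proposition \ref{prop:a516}, for $n\ge m_0+1$,
\[
\phi_n = c_0 a_n = c_0 \sum_{\mu=1}^K \sum_{j=1}^{m_\mu} \binom{n+j-1}{j-1} \overline{p}_\mu^{\,n} \rho_{\mu,j}.
\]
Under assumption (\ref{eq:p1max}), the single term indexed by $(\mu,j)=(1,m_1)$ strictly dominates every other term: for $\mu=1$ and $j<m_1$ the polynomial factor has strictly smaller degree, while for $\mu\ne 1$ we have $|p_\mu|<|p_1|$. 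Hence
\[
\phi_n = \frac{n^{m_1-1}}{(m_1-1)!}\,\overline{p}_1^{\,n}\, c_0\rho_{1,m_1} + o\bigl(n^{m_1-1}|p_1|^n\bigr).
\]
Since $c_0=h(0)$ is invertible (because $\det h$ has no zeros on $\overline{\D}$) and $\rho_{1,m_1}\ne 0$ by (\ref{eq:hinverse163}), the matrix $c_0\rho_{1,m_1}$ is nonzero, so taking the norm gives
\[
\Vert \phi_n\Vert \sim \frac{n^{m_1-1}}{(m_1-1)!}\,|p_1|^n\,\Vert c_0\rho_{1,m_1}\Vert.
\]

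Next I would invoke the elementary tail estimate: for any integer $\alpha\ge 0$ and $r\in(0,1)$,
\[
\sum_{k=n+1}^{\infty} k^{\alpha} r^{k} \ \sim\ \frac{r}{1-r}\, n^{\alpha}\, r^{n} \qquad \text{as } n\to\infty,
\]
which is immediate after the change of index $k=n+\ell$ and the dominated convergence theorem (with majorant $(1+\ell)^{\alpha} r^{\ell}$). Applying this with $\alpha=m_1-1$ and $r=|p_1|\in(0,1)$ yields
\[
\sum_{k=n+1}^{\infty}\Vert \phi_k\Vert \ \sim\ \frac{\Vert c_0\rho_{1,m_1}\Vert}{(m_1-1)!}\cdot\frac{|p_1|}{1-|p_1|}\, n^{m_1-1}\,|p_1|^{n}.
\]

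Dividing the asymptotic from Theorem \ref{thm:Bax323} by this denominator asymptotic, the common factor $n^{m_1-1}|p_1|^n/(m_1-1)!$ cancels, leaving precisely $(1-|p_1|)\,C_1/(|p_1|\,\Vert c_0\rho_{1,m_1}\Vert)$, as required by (\ref{eq:Baxeq222}). The only subtle step is the passage from the vector asymptotic of $\phi_n$ to the scalar asymptotic of $\Vert \phi_n\Vert$: one must rule out that the dominant matrix has zero norm, which is precisely what the nonvanishing of $c_0\rho_{1,m_1}$ guarantees. Beyond this, the argument is a routine asymptotic comparison and requires no new ingredients beyond Theorem \ref{thm:Bax323} and Proposition \ref{prop:a516}.
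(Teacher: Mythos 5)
Your proposal is correct and follows essentially the same route as the paper: both deduce $\Vert\phi_k\Vert\sim\Vert c_0\rho_{1,m_1}\Vert\binom{k+m_1-1}{m_1-1}\vert p_1\vert^k$ from (\ref{eq:ipc574}), Proposition \ref{prop:a516} and (\ref{eq:p1max}), derive the tail asymptotic $\sum_{k=n+1}^{\infty}\Vert\phi_k\Vert\sim\frac{\Vert c_0\rho_{1,m_1}\Vert}{(m_1-1)!(1-\vert p_1\vert)}n^{m_1-1}\vert p_1\vert^{n+1}$, and divide by the asymptotic of Theorem \ref{thm:Bax323}. The only cosmetic difference is that you evaluate the tail sum $\sum_{k>n}k^{m_1-1}\vert p_1\vert^k$ by a change of index and dominated convergence, whereas the paper differentiates the geometric series and applies Leibniz's rule; both are routine and give the same constant.
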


\begin{proof}[\bf Proof]
By (\ref{eq:ipc574}), Proposition \ref{prop:a516} and (\ref{eq:p1max}), we have
\[
\Vert \phi_k\Vert
= \Vert c_0 a_k\Vert
= 
\left\Vert 
\sum\nolimits_{{\mu}=1}^{K} \sum\nolimits_{j=1}^{m_{\mu}} \binom{k+j-1}{j-1} \barp_{\mu}^k 
c_0 \rho_{{\mu}, j} 
\right\Vert
\sim \ \Vert c_0 \rho_{1, m_1}\Vert \binom{k+m_1-1}{m_1-1} \vert p_1 \vert^k, 
\qquad k\to\infty.
\]
Hence, 
$\sum_{k=n+1}^{\infty}\Vert \phi_k\Vert
\ \sim \ \Vert c_0 \rho_{1, m_1}\Vert \sum_{k=n+1}^{\infty} \binom{k+m_1-1}{m_1-1} \vert p_1 \vert^k$ 
as $n\to\infty$. 
From
\[
\sum_{k=n+1}^{\infty} \binom{k+m_1-1}{m_1-1} x^k 
= \frac{1}{(m_1 - 1)!} \left(\frac{d}{dx}\right)^{m_1-1}\left( \frac{x^{n+m_1}}{1-x} \right), 
\qquad \vert x\vert <1,
\]
and Leibniz's rule, we have, as $k\to\infty$,
\[
\sum_{k=n+1}^{\infty} \binom{k+m_1-1}{m_1-1} \vert p_1 \vert^k 
\ \sim \ \binom{n+m_1}{m_1 - 1} \frac{\vert p_1 \vert^{n+1}}{1-\vert p_1 \vert}
\ \sim \ \frac{n^{m_1-1} \vert p_1 \vert^{n+1}}{(m_1 - 1)! (1-\vert p_1 \vert)}.
\]
Thus 
\begin{equation}
\sum_{k=n+1}^{\infty}\Vert \phi_k\Vert
\ \sim \ 
\frac{\Vert c_0 \rho_{1, m_1}\Vert}{(m_1 - 1)! (1-\vert p_1 \vert)} n^{m_1-1} \vert p_1 \vert^{n+1}, 
\qquad n\to\infty.
\label{eq:phi-sum-asymp111}
\end{equation} 
The assertion 
(\ref{eq:Baxeq222}) follows from (\ref{eq:phi-sum-asymp111}) and Theorem \ref{thm:Bax323}.
\end{proof}

\begin{remark}
To explain the assumption (\ref{eq:p1max}), 
we consider two parameters $p_1=x_1+iy_1$ and 
$p_2=x_2+iy_2$ belonging to the space $A:=\{(p_1,p_2)\in (\D\setminus \{0\})^2: \vert p_1\vert \ge \vert p_2\vert\}$. 
Then, the arrangement $\vert p_1\vert > \vert p_2\vert$ is generic in the sense that 
the complement
\[
\{(p_1,p_2)\in A: \vert p_1\vert = \vert p_2\vert\}=\{(p_1,p_2)\in A: x_1^2+y_1^2=x_2^2+y_2^2\}
\]
forms a hypersurface, hence its 4-dimensional Lebesgue measure is zero. 
In the same sense, the arrangement of $(p_1,\dots,p_K)$ given by (\ref{eq:p1max}) is generic. For, 
without loss of generality, we may assume
\[
\vert p_1\vert\ge \max \{\vert p_{\mu}\vert: \mu = 2,\dots,K\}.
\]
Then, 
if $(p_1,\dots,p_K)$ does not satisfy (\ref{eq:p1max}), then we have 
$\vert p_1\vert = \vert p_\mu\vert$ for some $\mu \in \{2,\dots,K\}$. 
Here, it should be noticed that the special choice of $p_1$ in (\ref{eq:p1max}) is just for the sake of simplicity; 
an analogue of Theorem \ref{thm:Bax323}, hence Corollary \ref{cor:appl6475}, still holds 
even if we replace (\ref{eq:p1max}) by, e.g., 
\[
\vert p_K\vert > \max \{\vert p_{\mu}\vert: \mu \in \{1,\dots,K-1\}\}.
\]
Still, it will be interesting to pursue analogues of Theorem \ref{thm:Bax323} and Corollary \ref{cor:appl6475} when 
(\ref{eq:p1max}) fails to hold, hence oscillations of the type 
$k_1 p_1^n + k_2(e^{i\theta}p_1)^n$ occur as $n\to\infty$.
\end{remark}

%%%%%%%%%%%%%%%%%%%%%%%%%%%%%%%%%%%%%%

%%%%%%%%%%%%%%%%%%%%%%%%%%%% Appendix %%%%%%%%%%%%%%%%%%%%%%%%

\appendix

\section{Proof of Proposition \ref{eq:plind123}}\label{sec:Append-A}

For $f:\N\cup\{0\}\to \C$ and ${\mu}\in\{1,2,\dots,K\}$, we define 
$D_{\mu}f:\N\cup\{0\}\to \C$ by
\[
D_{\mu}f(k) := f(k+1) - p_{\mu}f (k),\qquad k\in\N\cup\{0\}.
\]

\begin{proposition}\label{prop:diff312}
For ${\mu}, {\nu}\in\{1,2,\dots,K\}$, $i\in\N$ and $k\in\N\cup\{0\}$, 
\begin{equation}
D_{\nu} p_{{\mu},i}(k) =
(p_{\mu} - p_{\nu}) p_{{\mu},i}(k) + p_{{\mu},i-1}(k),
\label{eq:diff514}
\end{equation}
where $p_{{\mu}, 0}\equiv 0$. 
\end{proposition}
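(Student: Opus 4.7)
The plan is to unfold everything directly from the definition of $p_{\mu,i}$ in (\ref{eq:p-func333}) and the definition of $D_\nu$, and then reduce the identity to the Pascal rule for binomial coefficients. The whole statement is a finite-difference identity with no stationarity or spectral content, so I expect no real obstacle — the work is purely combinatorial.

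First I would write out
\[
D_{\nu} p_{\mu,i}(k) = p_{\mu,i}(k+1) - p_{\nu}\,p_{\mu,i}(k)
= \binom{k+1}{i-1}p_{\mu}^{k-i+2} - p_{\nu}\binom{k}{i-1}p_{\mu}^{k-i+1}.
\]
Then I would apply Pascal's rule $\binom{k+1}{i-1}=\binom{k}{i-1}+\binom{k}{i-2}$ to split the first term, so that
\[
D_{\nu} p_{\mu,i}(k) = \binom{k}{i-1}p_{\mu}^{k-i+1}(p_{\mu}-p_{\nu}) + \binom{k}{i-2}p_{\mu}^{k-i+2}.
\]
Recognising the first summand as $(p_{\mu}-p_{\nu})p_{\mu,i}(k)$ and the second as $p_{\mu,i-1}(k)$ (note that $k-i+2 = k-(i-1)+1$) gives the claim.

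The only point that requires care is the boundary case $i=1$, where the term $\binom{k}{i-2}=\binom{k}{-1}$ appears. Adopting the standard convention $\binom{k}{-1}=0$ is compatible with the stated convention $p_{\mu,0}\equiv 0$, so the identity remains valid for $i=1$ and no separate argument is needed.
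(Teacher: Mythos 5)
Your proof is correct and follows essentially the same route as the paper: both reduce the identity to Pascal's rule $\binom{k+1}{i-1}=\binom{k}{i-1}+\binom{k}{i-2}$ applied to the definition (\ref{eq:p-func333}). The only (cosmetic) difference is that the paper verifies the case $i=1$ by a separate direct computation, whereas you absorb it via the standard convention $\binom{k}{-1}=0$, which is indeed compatible with $p_{\mu,0}\equiv 0$.
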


\begin{proof}[\bf Proof]
Since
\[
D_{\nu} p_{{\mu},1}(k) = p_{\mu}^{k+1} - p_{\nu}p_{\mu}^k=(p_{\mu} - p_{\nu})p_{{\mu},1}(k),
\]
(\ref{eq:diff514}) holds for $i=1$. 
If $i\ge 2$, then, 
Pascal's rule $\binom{k+1}{i-1} = \binom{k}{i-1} + \binom{k}{i-2}$ implies that
\[
D_{\nu} p_{{\mu},i}(k)
=\binom{k+1}{i-1}p_{\mu}^{k-i+2} - \binom{k}{i-1} p_{\nu}p_{\mu}^{k-i+1}
=(p_{\mu} - p_{\nu}) p_{{\mu},i}(k) + p_{{\mu},i-1}(k).
\]
Thus (\ref{eq:diff514}) follows. 
\end{proof}

\begin{proof}[\bf Proof of Proposition \ref{eq:plind123}]
Let $\gamma_{{\mu},i}\in\C$, ${\mu} \in \{1,\dots,K\}$, $i \in \{1,\dots,m_{\mu}\}$ and 
suppose that 
\[
\sum_{{\mu}=1}^{K}\sum_{i=1}^{m_{\mu}} \gamma_{{\mu},i}p_{{\mu},i}(k)=0,\qquad k \in \{N,\dots,N+M-1\}.
\]
By Proposition \ref{prop:diff312}, we have
\[
0=\left( D_1^{m_1-1}D_2^{m_2}\cdots D_{K}^{m_{K}}
\sum_{{\mu}=1}^{K}\sum_{i=1}^{m_{\mu}} \gamma_{{\mu},i}p_{{\mu},i} \right)(N)
=\gamma_{1,m_1}p_1^N\prod_{{\mu}=2}^{K}(p_1-p_{\mu})^{m_{\mu}}.
\]
Hence $\gamma_{1,m_1}=0$. Repeating this procedure, we find that $\gamma_{{\mu},i}=0$, 
${\mu} \in \{1,\dots,K\}$, $i \in \{1,\dots,m_{\mu}\}$. Thus $p_{{\mu},i}$'s are linearly independent.
\end{proof}

%%%%%%%%%%%%%%%%%%%%%%%%%%%%%%%%%%%%%%%%%%%%%

\section{Proof of Theorem \ref{thm:hsharpinv123}}\label{sec:Append-B}

As in Section \ref{sec:3}, we assume that $\{X_k\}$ satisfies (\ref{eq:ARMA}). 
Let $h$ and $h_{\sharp}$ be as in (\ref{eq:farima529}) and (\ref{eq:rational333}), 
respectively, both satisfying (\ref{eq:C}). 

We consider the unitary matrix valued function $h^*h_{\sharp}^{-1}=h^{-1}h_{\sharp}^*$ on $\T$, 
called the phase function of $\{X_k\}$ (see p.\ 428 in Peller \cite{Pe}). 
We define a sequence $\{\beta_k\}_{k=-\infty}^{\infty}$ 
as the (minus of the) Fourier coefficients of $h^*h_{\sharp}^{-1}=h^{-1}h_{\sharp}^*$:
\begin{equation}
\beta_k 
=
-\int_{-\pi}^{\pi}e^{-ik\theta} h(e^{i\theta})^* h_{\sharp}(e^{i\theta})^{-1} \frac{d\theta}{2\pi}
=
-\int_{-\pi}^{\pi}e^{-ik\theta} h(e^{i\theta})^{-1} h_{\sharp}(e^{i\theta})^* \frac{d\theta}{2\pi}, 
\qquad k\in\Z.
\label{eq:beta-def667}
\end{equation}
From (\ref{eq:beta-def667}), we have
\begin{equation}
\beta_k^* 
=
-\int_{-\pi}^{\pi}e^{ik\theta} 
\{h_{\sharp}(e^{i\theta})^*\}^{-1} h(e^{i\theta}) \frac{d\theta}{2\pi}
=
-\int_{-\pi}^{\pi}e^{ik\theta} 
h_{\sharp}(e^{i\theta}) \{h(e^{i\theta})^*\}^{-1} \frac{d\theta}{2\pi}, 
\qquad k\in\Z.
\label{eq:beta*667}
\end{equation}
The proof of Theorem \ref{thm:hsharpinv123} below is based on the calculations of $\beta_k$ 
in two different ways.

Recall $h^{\dagger}$ from (\ref{eq:hdagger456}). 
From (\ref{eq:hinverse162}), we have
\begin{equation}
h^{\dagger}(z)^{-1}
= - \rho_0^* - \sum_{{\mu}=1}^{K} \sum_{j=1}^{m_{\mu}} \frac{z^j}{(z-p_{\mu})^j}\rho_{{\mu}, j}^* 
- \sum_{j=1}^{m_0} z^{-j} \rho_{0, j}^*.
\label{eq:hdagger123}
\end{equation}
Since $h(e^{i\theta})^* =h^{\dagger}(e^{i\theta}) $, 
we see from (\ref{eq:beta*667}) that
\begin{equation}
\beta_k^* 
= -\int_{-\pi}^{\pi} 
e^{ik\theta} h_{\sharp}(e^{i\theta})h^{\dagger}(e^{i\theta})^{-1}  \frac{d\theta}{2\pi},
\qquad k\in\Z.
\label{eq:beta621}
\end{equation}
Notice that the entries of $h_{\sharp}(z) h^{\dagger}(z)^{-1}$ 
are rational functions of $z\in\C$. 

Recall $\theta_{{\mu}, j}$ from (\ref{eq:theta456}).

\begin{proposition}\label{prop:hh316}
The matrix function $h_{\sharp}(z) h^{\dagger}(z)^{-1}$ has the form
\[
h_{\sharp}(z) h^{\dagger}(z)^{-1}
=
- \sum_{{\mu}=1}^{K} \sum_{j=1}^{m_{\mu}} \frac{1}{(z-p_{\mu})^j} \theta_{{\mu}, j} 
- \sum_{j=1}^{m_0} z^{-j} \theta_{0, j}
- R(z),
\]
where $R(z)$ is a $d\times d$ matrix function whose entries are rational functions of $z$ 
with no poles in $\overline{\D}$. Moreover, we have 
\begin{equation}
\theta_{\mu, m_{\mu}} = 
\begin{cases} (p_{\mu})^{m_{\mu}}h_{\sharp}(p_{\mu})\rho_{\mu, m_{\mu}}^* \neq 0, & 
\mu \in \{1,\dots,K\}, \\
h_{\sharp}(0)\rho_{0, m_0}^* \neq 0, & \mu=0.
\end{cases}
\label{eq:theta515}
\end{equation}
\end{proposition}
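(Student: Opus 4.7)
The plan is to read off the location of the poles of $h_\sharp(z) h^\dagger(z)^{-1}$ inside $\overline{\D}$ and their principal parts directly from the closed-form (\ref{eq:hdagger123}) for $h^\dagger(z)^{-1}$, and then to compute the top residue at each pole explicitly.

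First I would observe that $h_\sharp(z) h^\dagger(z)^{-1}$ is a rational matrix function of $z$, and that by (\ref{eq:C}) applied to $h_\sharp$, the factor $h_\sharp$ is holomorphic and invertible on an open neighborhood of $\overline{\D}$. Hence the only poles of the product that can lie in $\overline{\D}$ are those of $h^\dagger(z)^{-1}$, and (\ref{eq:hdagger123}) locates these precisely at $p_1, \dots, p_K$ (with multiplicity at most $m_\mu$) and at $0$ (with multiplicity at most $m_0$).

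Next I would apply the standard principal-part (Laurent) decomposition at each of these poles. For $\mu \in \{0, 1, \dots, K\}$ with the convention $p_0 := 0$, the coefficient of $(z - p_\mu)^{-j}$ in the Laurent expansion of $h_\sharp(z) h^\dagger(z)^{-1}$ at $p_\mu$ is recovered by the usual derivative formula applied to $(z - p_\mu)^{m_\mu} h_\sharp(z) h^\dagger(z)^{-1}$; matching signs with (\ref{eq:theta456}) identifies this coefficient with $-\theta_{\mu, j}$. Subtracting all these principal parts from $h_\sharp(z) h^\dagger(z)^{-1}$ leaves a rational matrix function that is analytic on $\overline{\D}$; its negative is the desired $R(z)$, and the stated decomposition follows.

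Finally, to verify (\ref{eq:theta515}), I would compute the $j = m_\mu$ residue from the most singular term of $h^\dagger(z)^{-1}$ at $p_\mu$. For $\mu \geq 1$, only the term $-z^{m_\mu}(z-p_\mu)^{-m_\mu}\rho_{\mu,m_\mu}^*$ in (\ref{eq:hdagger123}) survives the limit, which gives $\theta_{\mu,m_\mu} = p_\mu^{m_\mu}\, h_\sharp(p_\mu)\, \rho_{\mu,m_\mu}^*$; for $\mu = 0$ the dominant term $-z^{-m_0}\rho_{0,m_0}^*$ at the origin yields $\theta_{0,m_0} = h_\sharp(0)\, \rho_{0,m_0}^*$. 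Nonvanishing is then immediate: $p_\mu \neq 0$ for $\mu \geq 1$, $h_\sharp$ is invertible on $\overline{\D}$ by (\ref{eq:C}), and $\rho_{\mu,m_\mu} \neq 0$ for every $\mu$ by (\ref{eq:hinverse163}). There is no serious obstacle; the main point to watch is to keep the minus sign in (\ref{eq:theta456}) consistent with the minus signs appearing in front of both the $(z - p_\mu)^{-j}$ sum and the $z^{-j}$ sum in the statement.
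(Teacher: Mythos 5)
Your proposal is correct and follows essentially the same route as the paper: both read off the poles in $\overline{\D}$ from (\ref{eq:hdagger123}), identify the coefficients $\theta_{\mu,j}$ as the principal parts via the derivative formula (\ref{eq:theta456}), absorb the remainder into $R(z)$ with no poles in $\overline{\D}$, and compute the top coefficients $\theta_{\mu,m_\mu}$ from the most singular term, with nonvanishing following from the invertibility of $h_{\sharp}$ on $\overline{\D}$, $p_\mu\neq 0$, and $\rho_{\mu,m_\mu}\neq 0$.
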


\begin{proof}[\bf Proof]
From (\ref{eq:hdagger123}), we have
\[
\begin{aligned}
- h_{\sharp}(z) h^{\dagger}(z)^{-1} 
&= h_{\sharp}(z) \rho_0^* + 
\sum_{{\mu}=1}^{K} \sum_{j=1}^{m_{\mu}} \frac{1}{(z-p_{\mu})^j} z^j h_{\sharp}(z) \rho_{{\mu}, j}^* 
+ \sum_{j=1}^{m_0} z^{-j} h_{\sharp}(z) \rho_{0,j}^*\\
&=  \sum_{{\mu}=1}^{K} \sum_{j=1}^{m_{\mu}} \frac{1}{(z-p_{\mu})^j} \theta_{{\mu}, j} 
+ \sum_{j=1}^{m_0} z^{-j} \theta_{0, j}
+ R(z),
\end{aligned}
\]
where $R(z)$ is 
a $d\times d$ matrix valued function 
whose entries are rational functions of $z$ with no poles in $\overline{\D}$. In particular, we have 
$\theta_{0, m_0} = h_{\sharp}(0)\rho_{0, m_0}^*$ and 
$\theta_{\mu, m_{\mu}} = (p_{\mu})^{m_{\mu}}h_{\sharp}(p_{\mu})\rho_{\mu, m_{\mu}}^*$, 
$\mu \in \{1,\dots,K\}$. 
Since $\rho_{0,m_0}\ne 0$ and $h_{\sharp}(0)$ 
is invertible, we see that $\theta_{0, m_0} \ne 0$. Similarly, $\theta_{\mu, m_{\mu}} \ne 0$, 
$\mu \in \{1,\dots,K\}$.
\end{proof}

\begin{proposition}\label{prop:beta654}
We have 
$\beta_{n+1}^* 
= \sum_{{\mu}=1}^{K} \sum_{j=1}^{m_{\mu}} \binom{n}{j - 1} p_{\mu}^{n - j +1} \theta_{{\mu}, j} 
+ \sum_{j=1}^{m_0} \delta_{n+1,j} \theta_{0, j}$ for $n\in\N\cup\{0\}$. 
In particular, 
$\beta_{n+1}^* 
= \sum_{{\mu}=1}^{K} \sum_{j=1}^{m_{\mu}} \binom{n}{j - 1} p_{\mu}^{n - j +1} \theta_{{\mu}, j}$ 
for $n\ge m_0$.
\end{proposition}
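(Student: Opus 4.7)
The plan is to start from formula (\ref{eq:beta621}), substitute the decomposition of $h_{\sharp}(z) h^{\dagger}(z)^{-1}$ provided by Proposition \ref{prop:hh316}, and then evaluate the resulting integrals term by term. Setting $k=n+1$ in (\ref{eq:beta621}) and changing variables via $z=e^{i\theta}$ (so that $d\theta/(2\pi) = dz/(2\pi i z)$), the problem reduces to computing three kinds of contour integrals over $\T$: one from the poles $1/(z-p_{\mu})^j$, one from the polynomial-in-$z^{-1}$ term $\sum_{j=1}^{m_0} z^{-j}\theta_{0,j}$, and one from the remainder $R(z)$.

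First I would dispose of the $R(z)$ term: since the entries of $R$ are rational with no poles in $\overline{\D}$, $R(z)$ is holomorphic in an open neighborhood of $\overline{\D}$, so $z^{n}R(z)/z$ is holomorphic on $\overline{\D}$ (note $n\ge 0$, so there is no issue at $z=0$), and Cauchy's theorem gives a zero contribution. Next, the middle term is a trivial Fourier-coefficient computation: $\int_{-\pi}^{\pi} e^{i(n+1)\theta}\cdot e^{-ij\theta}\,d\theta/(2\pi) = \delta_{n+1,j}$, producing the term $\sum_{j=1}^{m_0}\delta_{n+1,j}\theta_{0,j}$.

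The main computation is the pole term. For fixed $\mu$ and $j$, I would evaluate
\[
\int_{\T}\frac{z^{n}}{(z-p_{\mu})^j}\frac{dz}{2\pi i}
\]
by the residue theorem. Since $|p_{\mu}|<1$, the only singularity inside $\T$ is at $z=p_{\mu}$, and the residue is
\[
\frac{1}{(j-1)!}\frac{d^{j-1}}{dz^{j-1}}z^{n}\Big|_{z=p_{\mu}} = \binom{n}{j-1} p_{\mu}^{n-j+1},
\]
with the standard convention $\binom{n}{j-1}=0$ when $n<j-1$. Combining the three contributions and using the sign in (\ref{eq:beta621}) (which cancels the minus signs in Proposition \ref{prop:hh316}) yields exactly the claimed formula for $\beta_{n+1}^*$. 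The specialization to $n\ge m_0$ is immediate, since then $n+1>m_0\ge j$ for every $j\in\{1,\dots,m_0\}$, so $\delta_{n+1,j}=0$ and the second sum vanishes.

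There is no real obstacle here; the only point that requires a moment of care is justifying that the vanishing of the $R$ contribution actually uses $k=n+1\ge 1$ (so no pole at $z=0$ is introduced when passing from $d\theta/(2\pi)$ to $dz/(2\pi i z)$), and tracking the signs correctly through the decomposition in Proposition \ref{prop:hh316} and the minus in (\ref{eq:beta621}).
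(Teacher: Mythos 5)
Your proposal is correct and follows essentially the same route as the paper: start from (\ref{eq:beta621}), insert the decomposition of Proposition \ref{prop:hh316}, and evaluate the three contributions by Cauchy's formula (residue $\binom{n}{j-1}p_{\mu}^{n-j+1}$ at $p_{\mu}$, the Kronecker delta from $z^{-j}$, and zero from the holomorphic remainder $R$). The specialization to $n\ge m_0$ is handled exactly as in the paper, so there is nothing to add.
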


\begin{proof}[\bf Proof]
By (\ref{eq:beta621}), Proposition \ref{prop:hh316} and Cauchy's formula, we have, for $n\in \N\cup\{0\}$,
\[
\begin{aligned}
\beta_{n+1}^*
&= - \int_{\T} \zeta^n 
h_{\sharp}(\zeta) h^{\dagger}(\zeta)^{-1} \frac{d\zeta}{2\pi i}
= \sum_{{\mu}=1}^{K} \sum_{j=1}^{m_{\mu}}  
\int_{\T}\frac{\zeta^n}{(\zeta - p_{\mu})^j} \frac{d\zeta}{2\pi i}  \theta_{{\mu}, j} 
+ \sum_{j=1}^{m_0}  \int_{\T} \zeta^{n-j} \frac{d\zeta}{2\pi i}  \theta_{0, j} 
+ \int_{\T} \zeta^n R(\zeta) \frac{d\zeta}{2\pi i}\\
&=\sum_{{\mu}=1}^{K} \sum_{j=1}^{m_{\mu}} \binom{n}{j - 1} p_{\mu}^{n - j +1} \theta_{{\mu}, j}
+ \sum_{j=1}^{m_0} \delta_{n+1,j} \theta_{0, j}.
\end{aligned}
\]
Thus, the proposition follows.
\end{proof}

\begin{proof}[\bf Proof of Theorem \ref{thm:hsharpinv123}]
As in (\ref{eq:hinverse162}) with (\ref{eq:hinverse163}), 
we can write $h_{\sharp}(z)^{-1}$ in the form
\[
h_{\sharp}(z)^{-1} 
= - \sigma_0 - 
\sum_{{\mu}=1}^{L} \sum_{j=1}^{n_{\mu}} 
\frac{1}{(1-\overline{r}_{\mu} z)^j}\sigma_{{\mu}, j} 
- \sum_{j=1}^{n_0} z^j \sigma_{0,j},
\]
where
\[
\left\{
\begin{aligned}
&L\in\N\cup\{0\},\\
&r_{\mu}\in\D\setminus \{0\},\quad {\mu} \in \{1, \dots, L\},
\qquad r_{\mu} \neq r_{\nu},\quad \mu\neq \nu,\\
&n_{\mu}\in\N,\quad {\mu} \in \{1, \dots, L\}, \qquad 
n_0 \in \N\cup\{0\},\\
&\sigma_{{\mu}, j} \in \C^{d\times d}, 
\quad {\mu} \in \{0, \dots, L\},\ j \in \{1, \dots, n_{\mu}\}, 
\qquad \sigma_{0} \in\C^{d\times d},\\
&\sigma_{{\mu}, n_{\mu}}\neq 0,\quad {\mu} \in \{0,\dots,L\}.
\end{aligned}
\right.
\]
We put 
$r_{0}:=0$ and $h_{\sharp}^{\dagger}(z):=\{h_{\sharp}(1/\overline{z})\}^*$. 
We follow the argument in the proof of Proposition \ref{prop:beta654} above by using 
$\beta_k^* =
- \int_{-\pi}^{\pi}e^{ik\theta} 
\{h_{\sharp}(e^{i\theta})^*\}^{-1} h(e^{i\theta}) d\theta/(2\pi)$
instead of 
$\beta_k^* =
- \int_{-\pi}^{\pi}e^{ik\theta} 
h_{\sharp}(e^{i\theta}) \{h(e^{i\theta})^*\}^{-1} d\theta/(2\pi)$ 
to calculate $\beta^*_{n+1}$. Then, 
\begin{equation}
\beta_{n+1}^* = \sum_{{\mu}=1}^{L} \sum_{j=1}^{n_{\mu}} 
\binom{n}{j - 1} r_{\mu}^{n - j +1} \lambda_{{\mu}, j} 
+ \sum_{j=1}^{n_0} \delta_{n+1,j} \lambda_{0, j},\qquad n\in\N\cup\{0\},
\label{eq:beta246}
\end{equation}
where 
\[
\lambda_{{\mu}, j} = - \lim_{z\to r_{\mu}} \frac{1}{(n_{\mu} - j)!} 
\frac{d^{n_{\mu} - j}}{dz^{n_{\mu} - j}} \left\{(z-r_{\mu})^{n_{\mu}} 
h_{\sharp}^{\dagger}(z)^{-1}  h(z) \right\}
\in \C^{d\times d}, \qquad \mu \in \{0,\dots,L\}, \ j \in \{1,\dots,n_{\mu}\}.
\]
We also obtain
\begin{equation}
\lambda_{\mu, n_{\mu}} = 
\begin{cases}
(r_{\mu})^{n_{\mu}} \sigma_{\mu, n_{\mu}}^* h(r_{\mu}) \neq 0, & 
\mu \in \{1,\dots,L\}, \\
\sigma_{0, n_0}^*h(0) \neq 0, & \mu=0.
\end{cases}
\label{eq:lambda515}
\end{equation}
From Proposition \ref{prop:beta654} and (\ref{eq:beta246}), we have
\[
\sum_{{\mu}=1}^{K} \sum_{j=1}^{m_{\mu}} \binom{n}{j - 1} p_{\mu}^{n - j +1} \theta_{{\mu}, j} 
+ \sum_{j=1}^{m_0} \delta_{n+1,j} \theta_{0, j}
=\sum_{{\mu}=1}^{L} \sum_{j=1}^{n_{\mu}} 
\binom{n}{j - 1} r_{\mu}^{n - j +1} \lambda_{{\mu}, j} 
+ \sum_{j=1}^{n_0} \delta_{n+1,j} \lambda_{0, j}, \qquad n\in\N\cup\{0\}.
\]
In particular, 
$\sum_{{\mu}=1}^{K} \sum_{j=1}^{m_{\mu}} \binom{n}{j - 1} p_{\mu}^{n - j +1} \theta_{{\mu}, j} 
=
\sum_{{\mu}=1}^{L} \sum_{j=1}^{n_{\mu}} 
\binom{n}{j - 1} r_{\mu}^{n - j +1} \lambda_{{\mu}, j}$ 
for $n\ge \max(m_0, n_0)$. This and Proposition \ref{eq:plind123} yield 
$K=L$, $p_{\mu} = r_{f(\mu)}$, $m_{\mu} = n_{f(\mu)}$ and 
$\theta_{\mu, j} = \lambda_{f(\mu), j}$ for 
$\mu \in \{1,\dots,K\}$, $j \in \{1,\dots,m_{\mu}\}$ and 
some bijection $f: \{1,\dots,K\}\to \{1,\dots,K\}$. We now have 
$\sum_{j=1}^{m_0} \delta_{n+1,j} \theta_{0, j}
=\sum_{j=1}^{n_0} \delta_{n+1,j} \lambda_{0, j}$ for $n\in\N\cup\{0\}$, 
and this gives $m_0=n_0$ (as well as $\theta_{0, j} = \lambda_{0, j}$, 
$j \in \{1,\dots, m_0\}$). Thus, (\ref{eq:hsharpinv162}) and (\ref{eq:hsharpinv163}) hold with 
$\rho_0^{\sharp}=\sigma_0$ and $\rho_{\mu,j}^{\sharp} = \sigma_{f(\mu),j}$, $\mu \in \{0,\dots,K\}$, 
$j \in \{1,\dots,m_{\mu}\}$. 
Finally, we obtain (\ref{eq:rho-h555}) from $\theta_{\mu, m_{\mu}} = \lambda_{f(\mu), m_{\mu}}$, 
(\ref{eq:theta515}) and (\ref{eq:lambda515}).
\end{proof}

%%%%%%%%%%%%%%%%%%%%%%%%%%%%%%%%%%%%%%%%%%%%%%%%%%%%%%%

\section{Proofs of Lemmas \ref{lem:C625} and \ref{lem:v516}}\label{sec:Append-C}

To prove Lemma \ref{lem:C625}, we use the next proposition.

\begin{proposition}\label{prop:sum123}
For $i, j, n\in \N\cup\{0\}$ and $x, y\in \D$, we have
\[
\sum_{\ell=0}^{\infty} \binom{\ell}{i} \binom{\ell + n}{j} x^{\ell-i} y^{\ell+n-j}
=\sum_{r=0}^{j} \binom{n+i}{r} 
\binom{i+j-r}{i} \frac{x^{j - r}y^{n+i-r}}{(1-xy)^{i+j+1-r}}.
\]
\end{proposition}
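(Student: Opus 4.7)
The plan is to recognize the left-hand side as a mixed partial derivative of a simple generating function and then evaluate that derivative using Leibniz's rule. Concretely, observe that for $\ell \ge i$,
\[
\binom{\ell}{i} x^{\ell-i} = \frac{1}{i!} \frac{d^i}{dx^i} x^{\ell},
\qquad
\binom{\ell+n}{j} y^{\ell+n-j} = \frac{1}{j!} \frac{d^j}{dy^j} y^{\ell+n},
\]
with the same identities holding trivially (both sides being zero) for $0 \le \ell < i$. Since $x, y \in \D$ implies $|xy| < 1$, the series $\sum_{\ell=0}^\infty x^\ell y^{\ell+n} = y^n/(1-xy)$ converges absolutely and its term-by-term differentiation is legitimate.

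The first step will be to apply the two observations above and interchange the sum with the partial derivatives, obtaining
\[
\mathrm{LHS} \;=\; \frac{1}{i!\, j!}\, \frac{\partial^{i+j}}{\partial x^i\, \partial y^j}
\sum_{\ell=0}^{\infty} x^{\ell} y^{\ell+n}
\;=\; \frac{1}{i!\, j!}\, \frac{\partial^{i+j}}{\partial x^i\, \partial y^j}\, \frac{y^n}{1-xy}.
\]
Next I would take the $i$ derivatives in $x$ first, which is easy because the factor $y^n$ is constant in $x$ and $(\partial/\partial x)^i (1-xy)^{-1} = i!\, y^i (1-xy)^{-(i+1)}$. This gives
\[
\mathrm{LHS} \;=\; \frac{1}{j!}\, \frac{\partial^{j}}{\partial y^{j}}\, \frac{y^{n+i}}{(1-xy)^{i+1}}.
\]

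The second step applies Leibniz's rule to the remaining $j$-fold $y$-derivative of the product $y^{n+i} \cdot (1-xy)^{-(i+1)}$. The elementary identities
\[
\frac{d^r}{dy^r} y^{n+i} = r!\, \binom{n+i}{r}\, y^{n+i-r},
\qquad
\frac{d^{j-r}}{dy^{j-r}}\, \frac{1}{(1-xy)^{i+1}} = \frac{(i+j-r)!}{i!}\, \frac{x^{j-r}}{(1-xy)^{i+j-r+1}}
\]
then yield, after collecting factorials and recognizing the binomial coefficient $\binom{i+j-r}{i} = (i+j-r)!/(i!\,(j-r)!)$, exactly the right-hand side
\[
\sum_{r=0}^{j} \binom{n+i}{r} \binom{i+j-r}{i} \frac{x^{j-r} y^{n+i-r}}{(1-xy)^{i+j+1-r}}.
\]

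There is no real obstacle here; the only care needed is bookkeeping of the factorials when combining $\binom{j}{r}\, r!\, (i+j-r)!/(j!\, i!) = \binom{i+j-r}{i}$, and ensuring that the interchange of sum and derivative is justified (which follows from uniform convergence of the geometric series and its derivatives on compact subsets of $\D \times \D$).
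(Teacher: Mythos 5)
Your proposal is correct and follows essentially the same route as the paper's proof: both identify the left-hand side with $\frac{1}{i!\,j!}\partial_x^i\partial_y^j\,\frac{y^n}{1-xy}$, perform the $x$-derivatives first, and then expand the $j$-fold $y$-derivative of $y^{n+i}(1-xy)^{-(i+1)}$ by Leibniz's rule with the same two elementary derivative identities. The only difference is your explicit remark on justifying the interchange of sum and derivative, which the paper leaves implicit.
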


\begin{proof}[\bf Proof]
Let $i, j, n\in \N\cup\{0\}$ and $x, y\in \D$. 
Since $y^n/(1-xy)=\sum_{\ell=0}^{\infty} x^{\ell}y^{n+\ell}$, we have
\[
\frac{1}{i! j!} \left(\frac{\partial }{\partial y}\right)^{j}\left(\frac{\partial }{\partial x}\right)^{i} 
\frac{y^n}{1-xy}
=\sum_{\ell=0}^{\infty} \binom{\ell}{i} \binom{n+\ell}{j} x^{\ell-i} y^{n+\ell-j}.
\]
On the other hand, since 
$(1/r!) (d/d y)^{r}y^{n+i}= \binom{n+i}{r} y^{n+i-r}$ 
and
\[
\frac{1}{(j-r)!} \left(\frac{\partial }{\partial y}\right)^{j-r}\frac{1}{(1-xy)^{i+1}}
= \binom{i+j-r}{j-r} \frac{x^{j-r}}{(1-xy)^{i+j+1-r}}
= \binom{i+j-r}{i} \frac{x^{j-r}}{(1-xy)^{i+j+1-r}},\qquad j\ge r,
\]
we have
\[
\begin{aligned}
&\frac{1}{i! j!} 
\left(\frac{\partial }{\partial y}\right)^{j}\left(\frac{\partial }{\partial x}\right)^{i} 
\frac{y^n}{1-xy}
=
\frac{1}{j!} \left(\frac{\partial }{\partial y}\right)^{j}
\frac{y^{n+i}}{(1-xy)^{i+1}}\\
&\qquad =
\sum_{r=0}^{j} \binom{j}{r}\binom{j}{r}^{-1} \left\{ \frac{1}{r!} 
\left(\frac{\partial }{\partial y}\right)^{r}y^{n+i} \right\} \left\{
\frac{1}{(j-r)!}  \left(\frac{\partial }{\partial y}\right)^{j-r}\frac{1}{(1-xy)^{i+1}}\right\}\\
&\qquad =
\sum_{r=0}^{j} 
\binom{n+i}{r} \binom{i+j-r}{i} \frac{x^{j - r}y^{n+i-r}}{(1-xy)^{i+j+1-r}}.
\end{aligned}
\]
Comparing, we obtain the proposition.
\end{proof}

\begin{remark}
Notice that Proposition \ref{prop:sum123} with $n=0$ implies
\[
\sum_{r=0}^{j} \binom{i}{r} 
\binom{i+j-r}{i} \frac{x^{j - r}y^{i-r}}{(1-xy)^{i+j+1-r}}
 =\sum_{r=0}^{i} \binom{j}{r} 
\binom{i+j-r}{j} \frac{x^{j - r}y^{i-r}}{(1-xy)^{i+j+1-r}}.
\]
Also, notice that 
$\binom{i}{r} \binom{i+j-r}{i} =\binom{j}{r} \binom{i+j-r}{j}$.
\end{remark}

\begin{proof}[\bf Proof of Lemma \ref{lem:C625}]
The proof is immediate from (\ref{eq:Cdef12}) and Proposition \ref{prop:sum123} 
with $n=0$, and $i$ and $j$ replaced by $i-1$ and $j-1$, respectively.
\end{proof}

\begin{proof}[\bf Proof of Lemma \ref{lem:v516}]
If $n\ge m_0+1$, then Proposition \ref{prop:sum123} yields, 
for $\mu \in \{1,\dots,K\}$, $i \in \{1,\dots,m_{\mu}\}$, 
\[
\sum_{\ell=0}^{\infty} \p_{\mu,i}(\ell) a_{\ell+n}
= \sum_{{\nu}=1}^{K} \sum_{j=1}^{m_{\nu}} 
\left\{
\sum_{\ell=0}^{\infty}
\binom{\ell}{i-1} \binom{n+\ell+j-1}{j-1}  p_{\mu}^{\ell-i+1} \overline{p}_{\nu}^{n+\ell} 
\right\}
\rho_{\nu,j}
= \sum_{{\nu}=1}^{K} \sum_{j=1}^{m_{\nu}} \xi_n^{{\mu},{\nu}}(i, j) \rho_{\nu,j}
\]
and
\[
\sum_{\ell=0}^{\infty} \overline{\p}_{\mu,i}(\ell) \tilde{a}_{\ell+n}
= \sum_{{\nu}=1}^{K} \sum_{j=1}^{m_{\nu}} 
\left\{
\sum_{\ell=0}^{\infty}
\binom{\ell}{i-1} \binom{n+\ell+j-1}{j-1}  \overline{p}_{\mu}^{\ell-i+1} p_{\nu}^{n+\ell} 
\right\}
\tilde{\rho}_{\nu,j}
= \sum_{{\nu}=1}^{K} \sum_{j=1}^{m_{\nu}} \overline{\xi}_n^{{\mu},{\nu}}(i, j) \tilde{\rho}_{\nu,j}.
\]
Thus, (\ref{eq:v222}) and (\ref{eq:vtilde222}) follow. 
If $m_0\ge 1$ and $1\le n\le m_0$, then, similarly, we have (\ref{eq:v333}) and (\ref{eq:vtilde333}). 
\end{proof}

%%%%%%%%%%%%%%%%%%%%%%%%%%%%%%%%%%%%%%%%%%%%%%%%%%%%%%%

\section{Proof of Theorem \ref{thm:phirep116}}\label{sec:Append-D}

To prove Theorem \ref{thm:phirep116}, we first prepare some propositions and lemmas. 
Recall $\p_n$ from (\ref{eq:ppp123}).

\begin{proposition}\label{prop:linde567}
For $N\in\N\cup\{0\}$, the matrix 
$(\p_N,\p_{N+1},\dots,\p_{N+M-1})\in\C^{dM\times dM}$ 
is invertible.
\end{proposition}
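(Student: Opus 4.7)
The plan is to reduce the question to a purely scalar statement and then invoke Proposition \ref{eq:plind123}. Unwinding the definitions in (\ref{eq:p362})--(\ref{eq:ppp123}), the matrix in question has $dM$ rows and $dM$ columns and decomposes into an $M\times M$ grid of $d\times d$ blocks: the block in the $(\mu,i)$-th block-row and the $k$-th block-column (with $k \in \{0,1,\dots,M-1\}$) is simply
\[
\p_{\mu,i}(N+k) \;=\; p_{\mu,i}(N+k)\, I_d.
\]
In other words, $(\p_N,\p_{N+1},\dots,\p_{N+M-1})$ is the Kronecker product $A\otimes I_d$, where $A\in\C^{M\times M}$ is the scalar matrix whose $(\mu,i)$-th row is exactly the vector
\[
p_{\mu,i} \;=\; \bigl(p_{\mu,i}(N),\, p_{\mu,i}(N+1),\,\dots,\,p_{\mu,i}(N+M-1)\bigr)
\]
appearing in Proposition \ref{eq:plind123}.

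With this observation in hand, the proof is a one-liner. First, Proposition \ref{eq:plind123} asserts that the $M$ rows of $A$ are linearly independent, so $\det A\neq 0$. Second, standard properties of the Kronecker product give $\det(A\otimes I_d)=(\det A)^d\neq 0$. Hence $(\p_N,\p_{N+1},\dots,\p_{N+M-1})$ is invertible, as required.

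I do not anticipate a genuine obstacle here; the only substantive ingredient is Proposition \ref{eq:plind123}, which has already been established in \ref{sec:Append-A}. If one prefers to avoid Kronecker-product language, the same argument can be phrased by observing that any nonzero vector in the kernel of $(\p_N,\dots,\p_{N+M-1})$ would, upon splitting into $d$ blocks of length $M$, produce a nontrivial linear relation among the $p_{\mu,i}$, contradicting Proposition \ref{eq:plind123}. Either way, the statement is essentially immediate from the scalar result.
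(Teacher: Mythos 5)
Your proof is correct and follows essentially the same route as the paper: the paper also reduces to the scalar $M\times M$ matrix with rows $p_{\mu,i}$, stating the identity $\det(\p_N,\dots,\p_{N+M-1})=\{\det(p(N),\dots,p(N+M-1))\}^d$ (which is exactly your Kronecker-product observation $\det(A\otimes I_d)=(\det A)^d$), and then invokes Proposition \ref{eq:plind123}.
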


\begin{proof}[\bf Proof]
For $k\in\N\cup\{0\}$, we define $p(k)\in\C^M$ by 
\[
p(k) = 
(p_{1, 1}(k), \dots, p_{1, m_1}(k) \vert p_{2, 1}(k), \dots, p_{2, m_2}(k) \vert 
 \cdots \vert 
p_{K, 1}(k), \dots, p_{K, m_{K}}(k))^{\top}.
\]
Then, by the definition of determinant, we have
\[
\det (\p_N,\p_{N+1},\dots,\p_{N+M-1})
=\left\{\det(p(N), p(N+1),\dots,p(N+M-1))\right\}^d.
\]
Since Proposition \ref{eq:plind123} implies that $\det(p(N), p(N+1),\dots,p(N+M-1))\neq 0$, 
the assertion follows.
\end{proof}

The next proposition will be used in the proof of Lemma \ref{lem:Ginv123} below.

\begin{proposition}\label{prop:Lambda116}
The matrix $\Lambda$ is positive definite. In particular, $\Lambda$ is invertible.
\end{proposition}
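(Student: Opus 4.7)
The plan is to exploit the representation $\Lambda = \sum_{\ell=0}^{\infty} \p_{\ell} \p_{\ell}^*$ directly: this expresses $\Lambda$ as a sum of rank-$d$ positive semidefinite matrices, so positive semidefiniteness is automatic, and the work lies entirely in ruling out a nonzero null vector.

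First, I would fix $u \in \C^{dM}$ and write
\[
u^* \Lambda u = \sum_{\ell=0}^{\infty} u^* \p_{\ell} \p_{\ell}^* u = \sum_{\ell=0}^{\infty} |\p_{\ell}^* u|^2 \ge 0,
\]
which establishes positive semidefiniteness. To upgrade to positive definiteness, I would suppose $u^* \Lambda u = 0$ and conclude from the nonnegativity of each term that $\p_{\ell}^* u = 0$ for every $\ell \ge 0$.

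The key remaining step is to show that this forces $u = 0$, and here Proposition \ref{prop:linde567} is tailor-made for the job. Taking $N = 0$ in that proposition yields that the matrix
\[
(\p_0, \p_1, \dots, \p_{M-1}) \in \C^{dM \times dM}
\]
(horizontal concatenation) is invertible, hence so is its adjoint. But the adjoint applied to $u$ is exactly the vertical stacking $(\p_0^* u, \p_1^* u, \dots, \p_{M-1}^* u)^\top$, which we have just shown vanishes. Invertibility then gives $u = 0$, completing the proof. The last sentence of the proposition (invertibility of $\Lambda$) is then immediate since a positive definite matrix has strictly positive eigenvalues.

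There is no real obstacle here: the only nontrivial input is the linear-independence-of-exponential-polynomials result already packaged as Proposition \ref{prop:linde567}, and the rest is a routine null-space argument for a sum of rank-one-block positive semidefinite matrices.
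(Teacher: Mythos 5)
Your proposal is correct and follows essentially the same route as the paper: positive semidefiniteness from the sum-of-blocks representation, vanishing of each term $\p_{\ell}^* u$ on a null vector, and then invertibility of $(\p_0,\p_1,\dots,\p_{M-1})$ from Proposition \ref{prop:linde567} (with $N=0$) to force $u=0$. The only difference is cosmetic: the paper works with a row vector $v$ and $v\p_{\ell}=0$, while you work with the adjoint, which is the same argument.
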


\begin{proof}[\bf Proof]
Clearly, $\Lambda$ is a Hermitian matrix. 
Suppose that $v\Lambda v^*=0$ for $v\in\C^{1\times dM}$. 
Since $v\p_{\ell} \p_{\ell}^* v^* = v\p_{\ell}(v\p_{\ell})^* \ge 0$, 
we see that $v\p_{\ell}=0$ for any $\ell\in\N\cup\{0\}$. This implies $v(\p_0,\p_1,\dots,\p_{M-1})=0$. 
Since $(\p_0,\p_1,\dots,\p_{M-1})\in \C^{dM\times dM}$ is invertible by Proposition \ref{prop:linde567}, 
we have $v=0$. Thus, $\Lambda$ is positive definite. 
\end{proof}

Let 
$X_k=\int_{-\pi}^{\pi}e^{-ik\theta}\eta(d\theta)$, $k\in\Z$, 
be the spectral representation of $\{X_k\}$, where $\eta$ is a $\C^d$-valued
random spectral measure. 
We define a $d$-variate stationary process $\{\varepsilon_k:k\in\Z\}$, 
called the forward innovation
process of $\{X_k\}$, by
\[
\varepsilon_k:=\int_{-\pi}^{\pi}e^{-ik\theta}h(e^{i\theta})^{-1}\eta(d\theta),\qquad k\in\Z.
\]
Then, $\{\varepsilon_k\}$ satisfies 
$\langle \varepsilon_n, \varepsilon_m\rangle = \delta_{n m}I_d$ and 
$V_{(-\infty,n]}^X=V_{(-\infty,n]}^{\varepsilon}$ for $n\in\Z$,
hence
\begin{equation}
(V_{(-\infty,n]}^X)^{\bot} = V_{[n+1, \infty)}^{\varepsilon},\qquad n\in\Z.
\label{eq:epsilon555}
\end{equation}
We also define the backward innovation process $\{\tilde{\varepsilon}_k: k\in\Z\}$ of $\{X_k\}$ by
\[
\tilde{\varepsilon}_k:=\int_{-\pi}^{\pi}e^{ik\theta}\{h_{\sharp}(e^{i\theta})^*\}^{-1}
\eta(d\theta),\qquad k\in\Z.
\]
Then, $\{\tilde{\varepsilon}_k\}$ satisfies
$\langle \tilde{\varepsilon}_n, \tilde{\varepsilon}_m\rangle=\delta_{n m}I_d$ and 
$V_{[-n,\infty)}^X=V_{(-\infty, n]}^{\tilde{\varepsilon}}$ for $n\in\Z$, 
hence
\begin{equation}
(V_{[-n,\infty)}^X)^{\bot} = V_{[n+1,\infty)}^{\tilde{\varepsilon}},\qquad n\in\Z.
\label{eq:tildeepsilon555}
\end{equation}

For $n\in\N\cup\{0\}$, we define 
$\mathcal{H}_n: (V_{[-n,\infty)}^X)^{\bot} \to (V_{(-\infty,-1]}^X)^{\bot}$ by
\[
\mathcal{H}_n x := P_{(-\infty,-1]}^{\perp} x,\qquad x\in (V_{[-n,\infty)}^X)^{\bot},
\]
and
$\tilde{\mathcal{H}}_n: (V_{(-\infty,-1]}^X)^{\bot} \to (V_{[-n,\infty)}^X)^{\bot}$ 
by
\[
\tilde{\mathcal{H}}_n x := P_{[-n,\infty)}^{\perp} x,\qquad x \in (V_{(-\infty,-1]}^X)^{\bot}.
\]
We denote by $\Vert \mathcal{H}_n\Vert$ (resp., $\Vert \tilde{\mathcal{H}}_n\Vert$) the operator norm of 
$\mathcal{H}_n$ (resp., $\tilde{\mathcal{H}}_n$).

\begin{proposition}\label{prop:Hbdd123}
For $n\in\N\cup\{0\}$, we have $\Vert \mathcal{H}_n\Vert=\Vert \tilde{\mathcal{H}}_n\Vert<1$.
\end{proposition}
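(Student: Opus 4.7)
The strategy is to establish the norm identity $\Vert \mathcal{H}_n\Vert = \Vert \tilde{\mathcal{H}}_n\Vert$ by recognizing these operators as mutual Hilbert-space adjoints, and then to derive the strict bound $<1$ by combining compactness of $\mathcal{H}_n$ with a density argument that rules out any unit vector achieving the norm.

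For the adjoint identity, I would take $x \in (V_{[-n,\infty)}^X)^{\bot}$ and $y \in (V_{(-\infty,-1]}^X)^{\bot}$ and use self-adjointness of orthogonal projections together with $P_{(-\infty,-1]}^{\perp} y = y$ and $P_{[-n,\infty)}^{\perp} x = x$ to obtain
\[
(\mathcal{H}_n x, y)_V = (P_{(-\infty,-1]}^{\perp} x, y)_V = (x, y)_V = (x, P_{[-n,\infty)}^{\perp} y)_V = (x, \tilde{\mathcal{H}}_n y)_V,
\]
so $\tilde{\mathcal{H}}_n = \mathcal{H}_n^*$ and the two operator norms agree; the bound $\le 1$ is automatic since $P_{(-\infty,-1]}^{\perp}$ is a contraction.

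For compactness I would show that $\mathcal{H}_n$ is Hilbert--Schmidt. Using (\ref{eq:epsilon555}) and (\ref{eq:tildeepsilon555}), the families $\{\tilde{\varepsilon}_k^j: k\ge n+1,\ j \in \{1,\dots,d\}\}$ and $\{\varepsilon_m^l: m\ge 0,\ l \in \{1,\dots,d\}\}$ are orthonormal bases of the domain and target of $\mathcal{H}_n$, so
\[
\Vert \mathcal{H}_n\Vert_{\mathrm{HS}}^2
= \sum\nolimits_{k\ge n+1,\, m\ge 0,\, j,l} |(\tilde{\varepsilon}_k^j,\varepsilon_m^l)_V|^2.
\]
A direct computation from the spectral representations of $\varepsilon_m$ and $\tilde{\varepsilon}_k$, combined with $w = hh^* = h_{\sharp}^*h_{\sharp}$, identifies each $(\tilde{\varepsilon}_k^j,\varepsilon_m^l)_V$ with a Fourier coefficient of the phase function $u := h^{-1}h_{\sharp}^* = h^*h_{\sharp}^{-1}$ at an index depending only on $k+m$. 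Because $h$ and $h_{\sharp}$ both satisfy (\ref{eq:C}), the entries of $u$ are rational on $\T$, so their Fourier coefficients decay geometrically; the double sum then converges, making $\mathcal{H}_n$ Hilbert--Schmidt and in particular compact.

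For the rigidity step, $\Vert\mathcal{H}_n x\Vert^2 = \Vert x\Vert^2 - \Vert P_{(-\infty,-1]}x\Vert^2$, so any unit vector $x$ with $\Vert\mathcal{H}_n x\Vert = 1$ must lie in $(V_{(-\infty,-1]}^X)^{\bot}\cap (V_{[-n,\infty)}^X)^{\bot} = (V_{(-\infty,-1]}^X + V_{[-n,\infty)}^X)^{\bot}$. The algebraic sum already contains every $X_k$, so its closure is all of $V^X$ and the orthogonal complement is trivial. Since compact operators attain their operator norm on the closed unit ball, the only extremizer being $0$ forces $\Vert\mathcal{H}_n\Vert < 1$. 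The principal obstacle will be the Fourier-coefficient bookkeeping in the Hilbert--Schmidt estimate---cleanly identifying $(\tilde{\varepsilon}_k^j,\varepsilon_m^l)_V$ with a Fourier coefficient of $u$ and extracting geometric decay from the rational form of $h$ and $h_{\sharp}$; the remaining steps are routine Hilbert-space geometry.
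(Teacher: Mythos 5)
Your argument is correct, but it takes a genuinely different route from the paper. The paper proves this proposition by passing to the dual process $\{X'_k\}$ (spectral density $w^{-1}$), observing that boundedness of $w$ and $w^{-1}$ gives the matrix Muckenhoupt condition, and invoking the Treil--Volberg theorem to conclude that the cosine of the angle between past and future of $\{X'_k\}$ is strictly less than one; the identification of that angle with $\Vert \mathcal{H}_n\Vert=\Vert \tilde{\mathcal{H}}_n\Vert$ then uses outerness of $\{h(\overline{z})^*\}^{-1}$ and $h_{\sharp}(z)^{-1}$ together with (\ref{eq:epsilon555})--(\ref{eq:tildeepsilon555}). You instead stay entirely inside elementary Hilbert-space theory: the adjoint identity $\tilde{\mathcal{H}}_n=\mathcal{H}_n^*$ gives the norm equality, and the strict bound comes from compactness plus the trivial intersection $(V_{(-\infty,-1]}^X)^{\bot}\cap (V_{[-n,\infty)}^X)^{\bot}=\{0\}$. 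Your Hilbert--Schmidt step is sound and in fact dovetails with the paper's own machinery: the matrix elements $\langle \tilde{\varepsilon}_k,\varepsilon_m\rangle$ are exactly $-\beta_{k+m}^*$ for the phase function in (\ref{eq:beta-def667}), whose positive-index coefficients decay geometrically because $h^{\dagger}h_{\sharp}^{-1}$ is rational with no poles on $\T$ (this is visible explicitly in Proposition \ref{prop:beta654}), so the double sum over $k\ge n+1$, $m\ge 0$ converges; the norm-attainment argument for compact operators and the density of $V_{(-\infty,-1]}^X+V_{[-n,\infty)}^X$ in $V$ complete the proof. What each approach buys: yours is self-contained and avoids the Treil--Volberg/Muckenhoupt apparatus, but it leans on the ARMA (rational) structure through the exponential decay of $\beta_k$, so it would not survive weakening of the hypotheses; the paper's argument only needs $w,w^{-1}$ bounded (indeed only the Muckenhoupt condition), which is why the same proof pattern extends to the long-memory setting of \cite{IKP2}. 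For the present paper either proof suffices.
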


\begin{proof}[\bf Proof]
Let $\{X'_k: k\in\Z\}$ be the dual process of $\{X_k\}$, which is 
a $d$-variate stationary process characterized by the biorthogonality relation 
$\langle X_j,X'_k\rangle=\delta_{jk}I_d$; see Masani \cite{M60} and Section 5 in 
\cite{IKP2}. 
The process $\{X'_k\}$ admits the two MA representations 
$X'_n=-\sum_{k=0}^\infty a_k^* \varepsilon_{n+k}$ and 
$X'_{-n}=-\sum_{k=0}^\infty \tilde{a}_k^* \tilde{\varepsilon}_{n+k}$ for $n\in\Z$. 
Moreover, for the spectral density $w$ of $\{X_k\}$, $\{X'_k\}$ has the spectral density $w^{-1}$. 
For $n\ge 0$, let 
\[
\rho_n:=\sup \{\vert (x, y)_V\vert: x\in V_{(-\infty,-n-1]}^{X'},\ y\in V_{[0,\infty)}^{X'},\ 
\Vert x\Vert_V\le 1,\ \Vert y\Vert_V\le 1\}
\]
be the cosine of angle between $V_{(-\infty,-n-1]}^{X'}$ and $V_{[0,\infty)}^{X'}$ 
(see, e.g., Treil and Volberg \cite{TV1, TV2}, Pourahmadi \cite{Pou}, and Bingham \cite{Bin}). 
Since both 
$w$ and $w^{-1}$ are continuous, hence bounded, on $\T$, $w^{-1}$ satisfies 
the matrix Muckenhoupt condition
\[
\sup_I 
\left\Vert 
\left( \frac{1}{m(I)} \int_I w^{-1} dm \right)^{1/2} 
\left( \frac{1}{m(I)} \int_I w dm \right)^{1/2}
\right\Vert<\infty,
\]
where $m$ is the normalized ($m(\T)=1$) Lebesgue measure on $\T$ and 
the supremum is taken over all subarcs $I$ of $\T$. 
Therefore, by Treil and Volberg \cite{TV1} (see also Peller \cite{Pe}, Arov and Dym \cite{AD}, 
and Bingham \cite{Bin}), 
we have $\rho_n<1$ for $n\ge 0$. 
Since both $-\sum_{k=0}^{\infty}z^ka_k^* = \{h(\overline{z})^*\}^{-1}$ and 
$-\sum_{k=0}^{\infty}z^k\tilde{a}_k^* = h_{\sharp}(z)^{-1}$ are outer 
(see, e.g., Katsnelson and Kirstein \cite{KK} and Section 2 in \cite{IKP2}), 
we see from (\ref{eq:epsilon555}) and (\ref{eq:tildeepsilon555}) that 
$V_{[0,\infty)}^{X'}=V_{[0,\infty)}^{\varepsilon}=(V_{(-\infty,-1]}^X)^{\bot}$ and that 
$V_{(-\infty,-n-1]}^{X'}=V_{[n+1,\infty)}^{\tilde{\varepsilon}}=(V_{[-n,\infty)}^X)^{\bot}$. 
Therefore,
\[
\rho_n 
= \sup \{\vert (x, y)_V\vert: x\in(V_{[-n,\infty)}^X)^{\bot},\ y\in (V_{(-\infty,-1]}^X)^{\bot}, 
\Vert x\Vert_V\le 1,\ \Vert y\Vert_V\le 1\}
=\Vert \mathcal{H}_n\Vert = \Vert \tilde{\mathcal{H}}_n\Vert
\]
(see Remark \ref{rem:angle122} below for the second and third equalities), so that 
$\Vert \mathcal{H}_n\Vert = \Vert \tilde{\mathcal{H}}_n\Vert<1$ for $n\ge 0$, as desired.
\end{proof}

\begin{remark}\label{rem:angle122}
For two closed subspaces $A$ and $B$ of a Hilbert space $L$, 
let $P_A: L\to A$ be the orthogonal projection operator and 
$P_A\vert_{B}$ the restriction of $P_A$ to $B$. Then we have 
$\sup \{\vert (x, y)\vert: x\in A,\ y\in B,\ \Vert x\Vert\le 1,\ \Vert y\Vert\le 1\}
= \Vert P_A\vert_{B}\Vert$. 
\end{remark}

The next lemma plays a key role in the arguments below.

\begin{lemma}\label{lem:beta162}
For $n\ge m_0$ and $k, \ell\in\N\cup\{0\}$, we have 
$\beta_{n+k+\ell+1}^* = \p_{\ell}^{\top} \Pi_n \Theta \p_k$, 
hence 
$\beta_{n+k+\ell+1} = \p_k^* (\Pi_n \Theta)^* \overline{\p}_{\ell}$.
\end{lemma}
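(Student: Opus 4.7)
The plan is to reduce the matrix identity to the explicit formula for $\beta_{n+1}^*$ that was already derived in Proposition \ref{prop:beta654}, via two applications of the Chu--Vandermonde convolution
\[
\sum_{i=0}^{s}\binom{a}{i}\binom{b}{s-i}=\binom{a+b}{s}.
\]
The key observation is that the building block matrices $\Pi_n$ and $\Theta$ are purposely built so that left multiplication by $\p_\ell^\top$ produces a ``shift''.

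First, I would prove the shift identity
\[
\p_\ell^{\top}\,\Pi_n=\p_{n+\ell}^{\top},\qquad \ell,n\in\N\cup\{0\}.
\]
Working block by block: the $\mu$-th block of $\p_\ell^{\top}$ is the row $(\p_{\mu,1}(\ell),\dots,\p_{\mu,m_\mu}(\ell))$, and the $\mu$-th diagonal block $\Pi_{\mu,n}$ is upper-triangular with $(\Pi_{\mu,n})_{i,j}=\p_{\mu,j-i+1}(n)$ for $j\ge i$. Hence the $j$-th entry of the resulting row equals
\[
\sum_{i=1}^{j}\binom{\ell}{i-1}\binom{n}{j-i}\,p_\mu^{\,\ell+n-j+1}I_d
=\binom{\ell+n}{j-1}p_\mu^{\,\ell+n-j+1}I_d=\p_{\mu,j}(\ell+n),
\]
by Chu--Vandermonde. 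This matches the $j$-th entry of $\p_{n+\ell}^{\top}$ in the $\mu$-th block, proving the claim.

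Next, using the shift identity,
\[
\p_\ell^{\top}\Pi_n\Theta\p_k=\p_{n+\ell}^{\top}\Theta\p_k.
\]
I would expand the right-hand side using the block-diagonal structure of $\Theta$ and the Hankel shape of each $\Theta_\mu$ (which has $(\Theta_\mu)_{i,j}=\theta_{\mu,i+j-1}$ when $i+j-1\le m_\mu$, zero otherwise). Switching the order of summation and introducing $s=i+j-1$ to group the terms by the common index of $\theta_{\mu,s}$, one gets
\[
\p_{n+\ell}^{\top}\Theta\p_k
=\sum_{\mu=1}^{K}\sum_{s=1}^{m_\mu}\theta_{\mu,s}\;p_\mu^{\,n+k+\ell-s+1}\sum_{i=1}^{s}\binom{n+\ell}{i-1}\binom{k}{s-i},
\]
and a second application of Chu--Vandermonde turns the inner sum into $\binom{n+k+\ell}{s-1}$. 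Therefore
\[
\p_\ell^{\top}\Pi_n\Theta\p_k
=\sum_{\mu=1}^{K}\sum_{s=1}^{m_\mu}\binom{n+k+\ell}{s-1}p_\mu^{\,n+k+\ell-s+1}\theta_{\mu,s}.
\]
Since $n\ge m_0$ implies $n+k+\ell\ge m_0$, Proposition \ref{prop:beta654} identifies the right-hand side with $\beta_{n+k+\ell+1}^{*}$, giving the first equality. The second equality follows by taking Hermitian conjugates, using $(\p_\ell^{\top})^{*}=\overline{\p}_\ell$.

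The only real bookkeeping hurdle is keeping the block indexing straight when expanding $\p_{n+\ell}^{\top}\Theta\p_k$; both Vandermonde collapses are short once the sums are written out. No new analytic input beyond Proposition \ref{prop:beta654} is needed.
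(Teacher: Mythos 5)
Your proof is correct and is essentially the paper's argument: the paper establishes the same identity by one direct computation of $\p_{\ell}^{\top}\Pi_n\Theta\p_k$, using the convolution $\binom{n+k+\ell}{j-1}=\sum_{i=1}^{j}\binom{k}{j-i}\sum_{q=1}^{i}\binom{\ell}{q-1}\binom{n}{i-q}$ obtained from $(1+x)^{n+k+\ell}=(1+x)^k(1+x)^{\ell}(1+x)^n$ (i.e., your two Chu--Vandermonde collapses composed), and then invokes Proposition \ref{prop:beta654} exactly as you do. Your factorization through the shift identity $\p_{\ell}^{\top}\Pi_n=\p_{n+\ell}^{\top}$ is merely a cleaner two-step organization of that same calculation, and the concluding Hermitian-conjugation step is identical.
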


\begin{proof}[\bf Proof]
We have
\[
\begin{aligned}
\sum_{j=1}^{\infty} \binom{n+k+\ell}{j-1} x^{j-1}
&=(1+x)^{n+k+\ell}=(1+x)^k(1+x)^{\ell}(1+x)^n\\
&=\sum_{j=1}^{\infty} 
\left\{
\sum_{r=0}^{j-1}\binom{k}{j-1-r} \sum_{s=0}^r \binom{\ell}{s}\binom{n}{r-s}
\right\} x^{j-1}\\
&=\sum_{j=1}^{\infty} 
\left\{
\sum_{i=1}^{j}\binom{k}{j-i} \sum_{q=1}^i \binom{\ell}{q-1}\binom{n}{i-q}
\right\} x^{j-1},
\end{aligned}
\]
where we have used the substitutions $i=r+1$ and $q=s+1$. 
Hence 
$\binom{n+k+\ell}{j-1} = \sum_{i=1}^{j}\binom{k}{j-i} \sum_{q=1}^i \binom{\ell}{q-1}\binom{n}{i-q}$ 
for $j\in\N$. 
Since $\p_{\ell}^{\top} \Pi_n \Theta \p_k = \p_{\ell}^{\top} \Pi_n \times \Theta \p_k$, 
this and Proposition \ref{prop:beta654} yield, for $n\ge m_0$,
\[
\begin{aligned}
\p_{\ell}^{\top} \Pi_n \Theta \p_k
&= \sum_{{\mu}=1}^{K} \sum_{i=1}^{m_{\mu}}
\left\{
\sum_{q=1}^{i} \binom{\ell}{q-1}p_{\mu}^{\ell-q+1} \binom{n}{i-q}p_{\mu}^{n-i+q}I_d
\right\}
\left\{
\sum_{j=i}^{m_{\mu}} \binom{k}{j-i}p_{\mu}^{k+i-j}\theta_{{\mu},j}
\right\}\\
&=\sum_{{\mu}=1}^{K} \sum_{j=1}^{m_{\mu}}
\left\{
\sum_{i=1}^{j} \binom{k}{j-i} \sum_{q=1}^{i} \binom{\ell}{q-1}\binom{n}{i-q}
\right\} 
p_{\mu}^{n+\ell+k+1-j} \theta_{{\mu},j}\\
&=\sum_{{\mu}=1}^{K} \sum_{j=1}^{m_{\mu}} \binom{n+k+\ell}{j-1} p_{\mu}^{n+\ell+k+1-j} \theta_{{\mu},j}
=\beta_{n+k+\ell+1}^*,
\end{aligned}
\]
as desired.
\end{proof}

For $n\in\N\cup\{0\}$, we define 
$H_n: \{(V_{[-n,\infty)}^X)^{\bot}\}^d \to \{(V_{(-\infty,-1]}^X)^{\bot}\}^d$ by
\[
H_n x := (\mathcal{H}_n x^1, \dots, \mathcal{H}_n x^d)^{\top}, 
\qquad 
x=(x^1,\dots,x^d)^{\top}\in (V_{[-n,\infty)}^X)^{\bot},
\]
and 
$\tilde{H}_n: \{(V_{(-\infty,-1]}^X)^{\bot}\}^d \to \{(V_{[-n,\infty)}^X)^{\bot}\}^d$ 
by 
\[
\tilde{H}_n x := (\tilde{\mathcal{H}}_n x^1, \dots, \tilde{\mathcal{H}}_n x^d)^{\top}, 
\qquad 
x=(x^1,\dots,x^d)^{\top} \in \{(V_{(-\infty,-1]}^X)^{\bot}\}^d.
\]
Then, 
by Lemma 4.2 in \cite{IKP2}, we have, for $\{s_{\ell}\}\in \ell_{2+}^{d\times d}$, 
\begin{equation}
H_n
\left(\sum_{\ell=0}^{\infty} s_{\ell} \tilde{\varepsilon}_{n+\ell+1}\right)
=-\sum_{j=0}^{\infty}
\left(\sum_{\ell=0}^{\infty} s_{\ell} \beta_{n+j+\ell+1}^*\right)
\varepsilon_j, 
\qquad 
\tilde{H}_n
\left(\sum_{\ell=0}^{\infty} s_{\ell} \varepsilon_{\ell}\right)
=-\sum_{j=0}^{\infty}
\left(\sum_{\ell=0}^{\infty} s_{\ell} \beta_{n+j+\ell+1}\right)
\tilde{\varepsilon}_{n+j+1}.
\label{eq:proj666}
\end{equation}

\begin{proposition}\label{prop:G123}
For $n\ge m_0$ and $v \in \C^{dM\times d}$, 
\begin{align}
H_n \left( \sum_{\ell=0}^{\infty} (v^{\top} \overline{\p}_{\ell}) \tilde{\varepsilon}_{n+\ell+1}\right)  
&= - \sum_{j=0}^{\infty} (v^{\top} \Lambda^{\top} \Pi_n \Theta \p_j) \varepsilon_j,
\label{eq:Hodd123}\\
\tilde{H}_n \left( \sum_{\ell=0}^{\infty} (v^{\top} \p_{\ell}) \varepsilon_{\ell}\right) 
&= - \sum_{j=0}^{\infty} (v^{\top} \Lambda (\Pi_n \Theta)^* \overline{\p}_j) \tilde{\varepsilon}_{n+j+1}.
\label{eq:Heven123}
\end{align}
\end{proposition}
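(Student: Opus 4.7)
The plan is to reduce both identities to direct computations using the general projection formulas in (\ref{eq:proj666}) together with Lemma \ref{lem:beta162}. Before applying (\ref{eq:proj666}), I first observe that the coefficient sequences $\{v^{\top} \overline{\p}_{\ell}\}$ and $\{v^{\top} \p_{\ell}\}$ belong to $\ell_{2+}^{d\times d}$: indeed, each entry of $\p_{\ell}$ is a linear combination of terms $\binom{\ell}{i-1} p_{\mu}^{\ell-i+1}$ with $|p_{\mu}|<1$, so $\p_{\ell}$ decays exponentially fast as $\ell\to\infty$, and the same holds for $v^{\top} \p_{\ell}$ and $v^{\top}\overline{\p}_{\ell}$. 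This justifies the use of (\ref{eq:proj666}) and the interchange of summations below.

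For (\ref{eq:Hodd123}), I apply the first formula in (\ref{eq:proj666}) with $s_{\ell} := v^{\top} \overline{\p}_{\ell}$. The coefficient of $\varepsilon_j$ is then
\[
-\sum_{\ell=0}^{\infty} (v^{\top} \overline{\p}_{\ell}) \beta_{n+j+\ell+1}^*.
\]
By Lemma \ref{lem:beta162} (applied with the roles of $k$ and $\ell$ as there, with $k=j$), we have $\beta_{n+j+\ell+1}^* = \p_{\ell}^{\top} \Pi_n \Theta \p_j$ for $n\ge m_0$, so the above becomes
\[
-v^{\top} \Bigl(\sum_{\ell=0}^{\infty} \overline{\p}_{\ell} \p_{\ell}^{\top} \Bigr) \Pi_n \Theta \p_j.
\]
Since $\Lambda = \sum_{\ell=0}^{\infty} \p_{\ell} \p_{\ell}^*$ is Hermitian (see the proof of Proposition \ref{prop:Lambda116}), we have $\sum_{\ell=0}^{\infty} \overline{\p}_{\ell} \p_{\ell}^{\top} = \overline{\Lambda} = \Lambda^{\top}$, which yields (\ref{eq:Hodd123}).

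For (\ref{eq:Heven123}), I use the second formula in (\ref{eq:proj666}) with $s_{\ell} := v^{\top} \p_{\ell}$; the coefficient of $\tilde{\varepsilon}_{n+j+1}$ is
\[
-\sum_{\ell=0}^{\infty} (v^{\top} \p_{\ell}) \beta_{n+j+\ell+1}.
\]
Here it is convenient to use the symmetric form of Lemma \ref{lem:beta162} obtained by swapping the roles of $k$ and $\ell$ in the second conclusion of that lemma, namely $\beta_{n+j+\ell+1} = \p_{\ell}^* (\Pi_n \Theta)^* \overline{\p}_j$; the fact that $\beta_{n+k+\ell+1}$ is symmetric in $k$ and $\ell$ (visible from the expansion $(1+x)^{n+k+\ell}=(1+x)^n(1+x)^k(1+x)^{\ell}$ used in the proof of Lemma \ref{lem:beta162}) makes this substitution legitimate. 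The coefficient then collapses to
\[
-v^{\top} \Bigl(\sum_{\ell=0}^{\infty} \p_{\ell} \p_{\ell}^* \Bigr) (\Pi_n \Theta)^* \overline{\p}_j = -v^{\top} \Lambda (\Pi_n \Theta)^* \overline{\p}_j,
\]
which gives (\ref{eq:Heven123}).

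There is no serious obstacle here; the proof is essentially bookkeeping. The only point requiring a little care is to pick the correct form of Lemma \ref{lem:beta162} (the two conclusions of that lemma are related by Hermitian conjugation, and a symmetric relabeling of $k,\ell$ is needed for (\ref{eq:Heven123}) to make the dimensions and the definition of $\Lambda$ fit together) and to remember that $\overline{\Lambda}=\Lambda^{\top}$ because $\Lambda$ is Hermitian.
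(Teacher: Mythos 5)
Your proof is correct and follows essentially the same route as the paper: apply the projection formulas (\ref{eq:proj666}) with $s_\ell = v^{\top}\overline{\p}_\ell$ (resp.\ $v^{\top}\p_\ell$), substitute Lemma \ref{lem:beta162} with the appropriate choice of indices, and sum to recognize $\Lambda^{\top}$ (resp.\ $\Lambda$). Your added remark that the coefficient sequences lie in $\ell_{2+}^{d\times d}$, via the exponential decay of $\p_\ell$, is a correct justification of a detail the paper leaves implicit.
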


\begin{proof}[\bf Proof]
First, we see from Lemma \ref{lem:beta162} that, for $n\ge m_0$ and $j\in\N\cup\{0\}$,
\[
\sum_{\ell=0}^{\infty} v^{\top} \overline{\p}_{\ell} \beta_{n+j+\ell+1}^*
= v^{\top}
\left(\sum_{\ell=0}^{\infty} \overline{\p}_{\ell} \p_{\ell}^{\top}\right) \Pi_n \Theta \p_j
=v^{\top} \Lambda^{\top} \Pi_n \Theta \p_j.
\]
This and the first equality in (\ref{eq:proj666}) yield (\ref{eq:Hodd123}). 
Next, we see from Lemma \ref{lem:beta162} that, for $n\ge m_0$ and $j\in\N\cup\{0\}$,
\[
\sum_{\ell=0}^{\infty} v^{\top} \p_{\ell} \beta_{n+j+\ell+1}
= v^{\top}
\left(\sum_{\ell=0}^{\infty} \p_{\ell} \p_{\ell}^*\right) (\Pi_n \Theta)^* \overline{\p}_j
=v^{\top} \Lambda (\Pi_n \Theta)^* \overline{\p}_j.
\]
This and the second equality in (\ref{eq:proj666}) give (\ref{eq:Heven123}). 
\end{proof}

Here is a key lemma.

\begin{lemma}\label{lem:Ginv123}
For $n\ge m_0$, both 
$I_{dM} - \tilde{G}_n G_n$ and $I_{dM} - G_n \tilde{G}_n$
are invertible and we have 
$\sum_{k=0}^{\infty} (\tilde{G}_n G_n)^k = (I_{dM} - \tilde{G}_n G_n)^{-1}$ 
and 
$\sum_{k=0}^{\infty} (G_n \tilde{G}_n)^k = (I_{dM} - G_n \tilde{G}_n)^{-1}$, 
where $(\tilde{G}_n G_n)^0 = (G_n \tilde{G}_n)^0 = I_{dM}$.
\end{lemma}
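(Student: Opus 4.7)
The plan is to leverage the operator-theoretic bound in Proposition \ref{prop:Hbdd123} together with Proposition \ref{prop:G123} to identify $\tilde{G}_nG_n$ (up to similarity) with the matrix representation of $\tilde{H}_nH_n$ restricted to a suitable finite-dimensional invariant subspace. Once this identification is made, the spectral radius bound $<1$ transfers from the Hilbert-space side to the matrix side, and the Neumann series for both $(I_{dM}-\tilde{G}_nG_n)^{-1}$ and $(I_{dM}-G_n\tilde{G}_n)^{-1}$ follow immediately.

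First, I would introduce the linear map $\Psi:\C^{dM\times d}\to \{(V^X_{[-n,\infty)})^\perp\}^d$ defined by
\[
\Psi(v) := \sum_{\ell=0}^{\infty} (v^{\top}\overline{\p}_{\ell})\,\tilde{\varepsilon}_{n+\ell+1}.
\]
The sum converges because the entries of $\overline{\p}_{\ell}$ decay geometrically, and the image lies in $\{(V^X_{[-n,\infty)})^\perp\}^d$ by (\ref{eq:tildeepsilon555}). I would verify injectivity as follows: if $\Psi(v)=0$, then the orthonormality of $\{\tilde{\varepsilon}^j_k\}$ forces $v^{\top}\overline{\p}_{\ell}=0$ for every $\ell\ge 0$; evaluating at $\ell\in\{0,1,\dots,M-1\}$ and invoking Proposition \ref{prop:linde567} (whose conclusion gives invertibility of $(\p_0,\dots,\p_{M-1})$ and hence of its complex conjugate) yields $v=0$.

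Next, applying Proposition \ref{prop:G123} first to $H_n\Psi(v)$ and then $\tilde{H}_n$ to the result, a direct substitution gives
\[
\tilde{H}_nH_n\,\Psi(v) \;=\; \Psi\bigl(C_n^{\top}v\bigr),\qquad C_n := \Lambda^{\top}\Pi_n\Theta\Lambda(\Pi_n\Theta)^*,
\]
so $\mathrm{Im}(\Psi)$ is $\tilde{H}_nH_n$-invariant and the induced action on $\C^{dM\times d}$ is left multiplication by $C_n^{\top}$. Since $\Lambda$ is invertible by Proposition \ref{prop:Lambda116}, the identity $C_n\Lambda^{\top}=\Lambda^{\top}G_n\tilde{G}_n$ exhibits $C_n$ as similar to $G_n\tilde{G}_n$; because $G_n$ and $\tilde{G}_n$ are square of the same size, $G_n\tilde{G}_n$ and $\tilde{G}_nG_n$ also share the same spectrum.

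Proposition \ref{prop:Hbdd123} gives $\|\tilde{H}_nH_n\|\le \|\tilde{\mathcal{H}}_n\|\,\|\mathcal{H}_n\|<1$, so the restriction of $\tilde{H}_nH_n$ to $\mathrm{Im}(\Psi)$ has operator norm $<1$, and hence (in finite dimensions) spectral radius $<1$. Since spectral radius is invariant under similarity and under the swap $AB\leftrightarrow BA$ for square matrices, $\tilde{G}_nG_n$ and $G_n\tilde{G}_n$ both have spectral radius $<1$. Invertibility of $I_{dM}-\tilde{G}_nG_n$ together with the Neumann series $\sum_{k\ge 0}(\tilde{G}_nG_n)^k=(I_{dM}-\tilde{G}_nG_n)^{-1}$ then follows by the standard argument, and similarly for $I_{dM}-G_n\tilde{G}_n$. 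The main obstacle is the substitution leading to $\tilde{H}_nH_n\Psi(v)=\Psi(C_n^{\top}v)$, which requires carefully tracking the transposes and conjugates appearing in Proposition \ref{prop:G123}; once this identity is in hand, the rest of the argument is routine.
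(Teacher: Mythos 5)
Your proposal is correct, and while it rests on the same three pillars as the paper's argument --- Proposition \ref{prop:G123}, the contraction bound $\Vert\mathcal{H}_n\Vert=\Vert\tilde{\mathcal{H}}_n\Vert<1$ of Proposition \ref{prop:Hbdd123}, and invertibility of $\Lambda$ --- the finishing mechanism is genuinely different. The paper iterates: it computes the Gram matrices $\langle (H_n\tilde{H}_n)^k(\sum_{\ell}(v^{\top}\p_{\ell})\varepsilon_{\ell}),\ \sum_{j}(u^{\top}\p_{j})\varepsilon_{j}\rangle = v^{\top}\Lambda(\tilde{G}_nG_n)^{k}\overline{u}$, uses the uniform bound on $\Vert H_n\tilde{H}_n\Vert$ to get convergence of the partial sums over $k$, recovers convergence of $\sum_k \Lambda(\tilde{G}_nG_n)^k$ by a suitable choice of $u,v$, cancels $\Lambda$ via Proposition \ref{prop:Lambda116}, and then obtains $\sum_k(G_n\tilde{G}_n)^k$ by sandwiching between $G_n$ and $\tilde{G}_n$. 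You instead apply Proposition \ref{prop:G123} only once to get the intertwining $\tilde{H}_nH_n\Psi(v)=\Psi(C_n^{\top}v)$ (the two minus signs in (\ref{eq:Hodd123}) and (\ref{eq:Heven123}) do cancel, and $C_n\Lambda^{\top}=\Lambda^{\top}G_n\tilde{G}_n$ checks out), and you transfer the bound as a spectral-radius statement: injectivity of $\Psi$ (your appeal to Proposition \ref{prop:linde567} parallels the paper's use of it inside Proposition \ref{prop:Lambda116}) realizes every eigenvalue of $C_n^{\top}$ --- via matrix eigenvectors of the form $xu^{\top}$ --- as an eigenvalue of $\tilde{H}_nH_n$ restricted to the finite-dimensional invariant subspace $\mathrm{Im}(\Psi)$, whose operator norm is $<1$; similarity and the $AB$ versus $BA$ swap then give spectral radius $<1$ for both $\tilde{G}_nG_n$ and $G_n\tilde{G}_n$, whence both Neumann series. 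Your route buys the slightly stronger and more transparent conclusion that the spectral radius is $<1$, with no need to track the $k$-fold iterates or to choose $u,v$ to reconstruct the matrix series; the paper's route avoids eigenvalue and invariant-subspace considerations and works directly with convergence. One cosmetic slip: the bound should be $\Vert\tilde{H}_nH_n\Vert\le\Vert\tilde{H}_n\Vert\,\Vert H_n\Vert\le\Vert\tilde{\mathcal{H}}_n\Vert\,\Vert\mathcal{H}_n\Vert<1$, using that $H_n$ and $\tilde{H}_n$ act componentwise; this does not affect the argument.
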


\begin{proof}[\bf Proof]
We assume $n\ge m_0$. 
It is enough for us to show that both 
$\sum_{k=0}^{\infty} (\tilde{G}_n G_n)^k$ and $\sum_{k=0}^{\infty} (G_n \tilde{G}_n)^k$ converge. 
We see from Proposition \ref{prop:G123} that, for $k\in\N$ and $v \in \C^{dM\times d}$, 
\[
(H_n \tilde{H}_n)^k \left( \sum_{\ell=0}^{\infty} (v^{\top} \p_{\ell}) \varepsilon_{\ell}\right)  
= \sum_{j=0}^{\infty} 
(v^{\top} \Lambda (\tilde{G}_n G_n)^{k-1} \tilde{G}_n \Pi_n \Theta \p_j) \varepsilon_{j},
\]
hence, for $k\in\N$ and $u, v\in \C^{dM\times d}$,
\[
\left\langle 
(H_n \tilde{H}_n)^k \left( \sum_{\ell=0}^{\infty} (v^{\top} \p_{\ell}) \varepsilon_{\ell}\right), 
\sum_{j=0}^{\infty} (u^{\top} \p_j) \varepsilon_{j}
\right\rangle 
=
v^{\top} \Lambda (\tilde{G}_n G_n)^{k-1} \tilde{G}_n \Pi_n \Theta 
\left(\sum_{j=0}^{\infty} \p_j \p_j^* \right) \overline{u}
= 
v^{\top} \Lambda (\tilde{G}_n G_n)^{k} \overline{u},
\]
and similarly for $k=0$. Since 
$(H_n \tilde{H}_n)^k x 
= ((\mathcal{H}_n \tilde{\mathcal{H}}_n)^k x^1, \dots, (\mathcal{H}_n \tilde{\mathcal{H}}_n)^k x^d)^{\top}$ 
for $x=(x^1,\dots,x^d)^{\top} \in \{(V_{(-\infty,-1]}^X)^{\bot}\}^d$, 
it follows from Proposition \ref{prop:Hbdd123} that
\[
\sum_{k=0}^{N} v^{\top} \Lambda (\tilde{G}_n G_n)^{k} \overline{u}
=\left\langle 
\sum_{k=0}^{N} 
(H_n \tilde{H}_n)^k \left( \sum_{\ell=0}^{\infty} (v^{\top} \p_{\ell}) \varepsilon_{\ell}\right), 
\sum_{j=0}^{\infty} (u^{\top} \p_j) \varepsilon_{j} \right\rangle
\]
converges as $N\to\infty$, for any $u, v\in \C^{dM\times d}$. 
By choosing $u_i, v_i\in \C^{dM\times d}\ (i=1,\dots,d)$ 
so that $(u_1,\dots,u_d) = (v_1,\dots,v_d)=I_{dM}$, 
we find that $\sum_{k=0}^{\infty} \Lambda (\tilde{G}_n G_n)^{k}$ 
converges. 
Since $\Lambda$ is invertible 
by Proposition \ref{prop:Lambda116}, 
$\sum_{k=0}^{\infty} (\tilde{G}_n G_n)^{k}$ also converges. 
Finally, from 
$\sum_{k=1}^{N} (G_n \tilde{G}_n)^k = G_n \left\{\sum_{k=0}^{N-1} (\tilde{G}_n G_n)^k\right\}\tilde{G}_n$ 
for $N\in\N$, 
$\sum_{k=0}^{\infty} (G_n \tilde{G}_n)^k$ converges, too.
\end{proof}

For $n\in\mathbb{N}$ and $k\in\mathbb{N}\cup\{0\}$,
the two sequences $\{b_{n,j}^k\}_{j=0}^{\infty}\in \ell_{2+}^{d\times d}$ and 
$\{\tilde{b}_{n,j}^k\}_{j=0}^{\infty}\in \ell_{2+}^{d\times d}$ 
are defined by the recursions
\[
b_{n,j}^0=\delta_{0,j}I_d,
\qquad 
b_{n,j}^{2k+1}
=\sum_{\ell=0}^{\infty} b_{n,\ell}^{2k} \beta_{n+j+\ell+1},
\qquad 
b_{n,j}^{2k+2}
=\sum_{\ell=0}^{\infty} b_{n,\ell}^{2k+1} \beta_{n+j+\ell+1}^*
\]
and 
\[
\tilde{b}_{n,j}^0=\delta_{0,j}I_d,
\qquad 
\tilde{b}_{n,j}^{2k+1}
=\sum_{\ell=0}^\infty
\tilde{b}_{n,\ell}^{2k} \beta_{n+j+\ell+1}^*,
\qquad 
\tilde{b}_{n,j}^{2k+2}
=\sum_{\ell=0}^{\infty}
\tilde{b}_{n,\ell}^{2k+1} \beta_{n+j+\ell+1},
\]
respectively (see Section 4 in \cite{IKP2}).

\begin{lemma}\label{lem:b146}
For $n\ge \max(m_0, 1)$, $k\in\mathbb{N}$ and $j\in\mathbb{N}\cup\{0\}$, we have
\begin{align}
b_{n,j}^{2k-1} &= \p_0^{\top} (\tilde{G}_n G_n)^{k-1} (\Pi_n \Theta)^* \overline{\p}_j,
\label{eq:bodd123}\\
b_{n,j}^{2k} &= \p_0^{\top} (\tilde{G}_n G_n)^{k-1} \tilde{G}_n \Pi_n \Theta \p_j,
\label{eq:beven123}\\
\tilde{b}_{n,j}^{2k-1} &= \p_0^{\top} (G_n \tilde{G}_n)^{k-1} \Pi_n \Theta \p_j,
\label{eq:tildebodd123}\\
\tilde{b}_{n,j}^{2k} &= \p_0^{\top} (G_n \tilde{G}_n)^{k-1} G_n (\Pi_n \Theta)^* \overline{\p}_j.
\label{eq:tildebeven123}
\end{align}
\end{lemma}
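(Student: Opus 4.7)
The plan is to prove all four identities simultaneously by induction on $k$, using Lemma~\ref{lem:beta162} to convert every $\beta$ that appears in the recursions into a matrix product built from $\p_\ell$, $\Pi_n\Theta$, $(\Pi_n\Theta)^*$, and then using the definition $\Lambda=\sum_{\ell=0}^\infty \p_\ell \p_\ell^*$ (together with its transpose $\Lambda^\top=\sum_{\ell=0}^\infty \bar\p_\ell \p_\ell^\top$) to collapse the infinite sum in $\ell$ into a single matrix factor $\Lambda$ or $\Lambda^\top$. That single factor is exactly what turns a product of the form $(\Pi_n\Theta)^*\Lambda^\top\Pi_n\Theta$ into $\tilde G_n\cdot\Pi_n\Theta$, or $\Pi_n\Theta\,\Lambda\,(\Pi_n\Theta)^*$ into $G_n(\Pi_n\Theta)^*$, which is exactly how the powers $(\tilde G_n G_n)^k$ and $(G_n\tilde G_n)^k$ are built up one factor at a time.

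For the base case $k=1$ of (\ref{eq:bodd123}), I would observe that $b_{n,j}^{1}=\beta_{n+j+1}$ by the definition with $b_{n,\ell}^{0}=\delta_{0,\ell}I_d$, and then apply Lemma~\ref{lem:beta162} in the form $\beta_{n+0+j+1}=\p_0^*(\Pi_n\Theta)^*\overline{\p}_j$, noting that $\p_0^*=\p_0^\top$ since $\p_0$ has real entries. The analogous base case for (\ref{eq:tildebodd123}) uses $\beta_{n+j+1}^{*}=\p_0^\top\Pi_n\Theta\,\p_j$.

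For the inductive step, suppose (\ref{eq:bodd123}) holds at level $k$. Then
\[
b_{n,j}^{2k}
=\sum_{\ell=0}^\infty b_{n,\ell}^{2k-1}\beta_{n+j+\ell+1}^{*}
=\p_0^\top(\tilde G_nG_n)^{k-1}(\Pi_n\Theta)^*\Bigl(\sum_{\ell=0}^\infty \overline{\p}_\ell\p_\ell^\top\Bigr)\Pi_n\Theta\,\p_j
=\p_0^\top(\tilde G_nG_n)^{k-1}\tilde G_n\Pi_n\Theta\,\p_j,
\]
which is (\ref{eq:beven123}), and then
\[
b_{n,j}^{2k+1}
=\sum_{\ell=0}^\infty b_{n,\ell}^{2k}\beta_{n+j+\ell+1}
=\p_0^\top(\tilde G_nG_n)^{k-1}\tilde G_n\Pi_n\Theta\Bigl(\sum_{\ell=0}^\infty \p_\ell\p_\ell^*\Bigr)(\Pi_n\Theta)^*\overline{\p}_j
=\p_0^\top(\tilde G_nG_n)^{k}(\Pi_n\Theta)^*\overline{\p}_j,
\]
advancing (\ref{eq:bodd123}) to level $k+1$. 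The argument for $\tilde b_{n,j}^{k}$ is symmetric, alternating the roles of $\beta$ and $\beta^{*}$, and hence of $\Lambda$ and $\Lambda^\top$, so that the odd-indexed $\tilde b$ produces $\Pi_n\Theta\,\p_j$ on the right and $G_n$'s accumulate on the left, yielding (\ref{eq:tildebodd123}) and (\ref{eq:tildebeven123}).

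The only thing to check carefully is the transpose/conjugate bookkeeping: one must be sure that $\sum_\ell \p_\ell\p_\ell^*=\Lambda$ and $\sum_\ell \overline{\p}_\ell\p_\ell^\top=\Lambda^\top$ are inserted in the right slots so that the factors collapse to $G_n=\Pi_n\Theta\Lambda$ and $\tilde G_n=(\Pi_n\Theta)^*\Lambda^\top$ rather than some mis-conjugated variant. This is the only non-mechanical point, and it is essentially forced by the placement of the $\beta$ versus $\beta^{*}$ in the defining recursions, so the proof is straightforward once the base cases are settled. No use of Lemma~\ref{lem:Ginv123} or convergence of the Neumann series is needed here, since the formulas are finite matrix products at each fixed~$k$.
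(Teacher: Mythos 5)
Your proposal is correct and follows essentially the same route as the paper: induction on $k$, with the base case $b_{n,j}^{1}=\beta_{n+j+1}=\p_0^{\top}(\Pi_n\Theta)^*\overline{\p}_j$ from Lemma \ref{lem:beta162}, and the inductive step collapsing $\sum_{\ell}\p_{\ell}\p_{\ell}^*=\Lambda$ and $\sum_{\ell}\overline{\p}_{\ell}\p_{\ell}^{\top}=\Lambda^{\top}$ to build up the factors $\tilde{G}_n$ and $G_n$, with the $\tilde{b}$ identities handled symmetrically (as the paper also does, omitting details). Your remark that Lemma \ref{lem:Ginv123} and Neumann-series convergence are not needed here is also consistent with the paper's proof.
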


\begin{proof}[\bf Proof]
We assume $n\ge \max(m_0, 1)$, and prove (\ref{eq:bodd123}) and (\ref{eq:beven123}) by induction. 
First, from Lemma \ref{lem:beta162}, 
$b_{n,j}^1 = \beta_{n+j+1} = \p_0^{\top} (\Pi_n \Theta)^* \overline{\p}_j$. 
Next, for $k\in\N$, we assume (\ref{eq:bodd123}). Then, by Lemma \ref{lem:beta162}, 
\[
\begin{aligned}
b_{n,j}^{2k}
&= \sum_{\ell=0}^{\infty} b_{n,\ell}^{2k-1} \beta_{n+j+\ell+1}^*
= \sum_{\ell=0}^{\infty} \p_0^{\top} (\tilde{G}_n G_n)^{k-1} (\Pi_n \Theta)^* \overline{\p}_{\ell}
\p_{\ell}^{\top} \Pi_n \Theta \p_j\\
&= \p_0^{\top} (\tilde{G}_n G_n)^{k-1} (\Pi_n \Theta)^* 
\left(\sum_{\ell=0}^{\infty} \overline{\p}_{\ell} \p_{\ell}^{\top}\right) \Pi_n \Theta \p_j
=\p_0^{\top} (\tilde{G}_n G_n)^{k-1} (\Pi_n \Theta)^*  \Lambda^{\top} \Pi_n \Theta \p_j\\
&=\p_0^{\top} (\tilde{G}_n G_n)^{k-1} \tilde{G}_n \Pi_n \Theta \p_j
\end{aligned}
\]
or (\ref{eq:beven123}). 
From this as well as Lemma \ref{lem:beta162}, 
\[
\begin{aligned}
b_{n,j}^{2k+1}
&= \sum_{\ell=0}^{\infty} b_{n,\ell}^{2k} \beta_{n+j+\ell+1}
= \p_0^{\top} (\tilde{G}_n G_n)^{k-1} \tilde{G}_n \Pi_n \Theta 
\left(\sum_{\ell=0}^{\infty} \p_{\ell} \p_{\ell}^*\right) (\Pi_n \Theta)^* \overline{\p}_j\\
&=\p_0^{\top} (\tilde{G}_n G_n)^{k-1} \tilde{G}_n \Pi_n \Theta \Lambda (\Pi_n \Theta)^* 
\overline{\p}_j
=\p_0^{\top} (\tilde{G}_n G_n)^{k} (\Pi_n \Theta)^* \overline{\p}_j
\end{aligned}
\]
or (\ref{eq:bodd123}) with $k$ replaced by $k+1$. 
Thus (\ref{eq:bodd123}) and (\ref{eq:beven123}) follow. 
We can prove (\ref{eq:tildebodd123}) and (\ref{eq:tildebeven123}) by induction similarly; we omit the details. 
\end{proof}

We are now ready to prove Theorem \ref{thm:phirep116}.

\begin{proof}[\bf Proof of Theorem \ref{thm:phirep116}]
By Theorem 5.4 in \cite{IKP2}, we have 
$\phi_{n,j}
=\sum_{k=0}^\infty\{\phi_{n,j}^{2k}+\phi_{n,n-j+1}^{2k+1}\}$ 
for $n \in \N$, $j \in \{1,\dots,n\}$, 
where 
$\phi_{n,j}^{2k} := c_0 \sum_{\ell=0}^\infty b_{n,\ell}^{2k} a_{j+\ell}$ and 
$\phi_{n,j}^{2k+1} := c_0 \sum_{\ell=0}^\infty b_{n,\ell}^{2k+1} \tilde a_{j+\ell}$ 
for $n\in\mathbb{N}$ and $k, j\in\mathbb{N}\cup\{0\}$. 
Since $b_{n,j}^0 = \delta_{0,j}I_d$, we have $\phi_{n,j}^{0} = c_0 a_j$, 
$\phi_{n,j}
=c_0 a_j + \sum_{k=1}^\infty\{\phi_{n,j}^{2k}+\phi_{n,n-j+1}^{2k-1}\}$. 
By Lemma \ref{lem:b146}, we have, for $n\ge \max(m_0, 1)$, $k\in\mathbb{N}$ 
and $j \in \{1,\dots,n\}$, 
\begin{align*}
&\phi_{n,j}^{2k} 
= c_0 \p_0^{\top} (\tilde{G}_n G_n)^{k-1} \tilde{G}_n \Pi_n \Theta v_j
= c_0 \p_0^{\top} (\tilde{G}_n G_n)^{k-1} (\Pi_n \Theta)^* \Lambda^{\top} \Pi_n \Theta v_j,\\
&\phi_{n,n-j+1}^{2k-1} 
= c_0 \p_0^{\top} (\tilde{G}_n G_n)^{k-1} (\Pi_n \Theta)^* \tilde{v}_{n-j+1}.
\end{align*}
Therefore, thanks to Lemma \ref{lem:Ginv123}, we obtain the theorem.
\end{proof}

%%%%%%%%%%%%%%%%%%%%%%%%%%%%%%%%%%%%%%%

\section*{Acknowledgments}
The author would like to thank the Editor and referees for their helpful comments.

%%%%%%%%%%%%%%%%%%%%%%%%%%%%%%%%%%%%%


\begin{thebibliography}{00}

%% \bibitem{label}
%% Text of bibliographic item

\bibitem{AD}
D.\ Z.\ Arov, H.\ Dym, 
$J$-Contractive Matrix Valued Functions and Related Topics, 
Cambridge University Press, 2008.

\bibitem{Bax}
G.\ Baxter, 
An asymptotic result for the finite predictor, 
Math.\ Scand.\ 10 (1962) 137--144.

\bibitem{Ber}
K.\ N.\ Berk, 
Consistent autoregressive spectral estimates, 
Ann.\ Statist.\ 2 (1974) 489--502.



\bibitem{Bin}
N.\ H.\ Bingham, 
Multivariate prediction and matrix Szeg\"o theory, 
Probab.\ Surv.\ 9 (2012) 325--339.



\bibitem{BD}
P.\ J.\ Brockwell, R.\ A.\ Davis, 
Time Series: Theory and Methods, 2nd ed., 
Springer, 1991.

\bibitem{Bu}
P.\ B\"uhlmann, 
Sieve bootstrap for time series, 
Bernoulli 3 (1997) 123--148.

\bibitem{I}
A.\ Inoue, 
Linear-time algorithms for block Toeplitz systems with rational symbols, 
in preparation.

\bibitem{IK06}
A.\ Inoue, Y.\ Kasahara, 
Explicit representation of finite predictor coefficients and its applications, 
Ann.\ Statist.\ 34 (2006) 973--993.

\bibitem{IK18}
A.\ Inoue, Y.\ Kasahara, 
Simple matrix representations of the orthogonal polynomials for a 
rational spectral density on the unit circle, 
J.\ Math.\ Anal.\ Appl.\ 464 (2018) 1366--1374.


\bibitem{IKP1}
A.\ Inoue, Y.\ Kasahara, M.\ Pourahmadi, 
The intersection of past and future for multivariate stationary processes, 
Proc.\ Amer.\ Math.\ Soc.\ 144 (2016) 1779--1786.

\bibitem{IKP2}
A.\ Inoue, Y.\ Kasahara, M.\ Pourahmadi, 
Baxter's inequality for finite predictor coefficients of multivariate long-memory 
stationary processes, 
Bernoulli 24 (2018) 1202--1232.



\bibitem{KK}
V.\ E.\ Katsnelson, B.\ Kirstein, 
On the theory of matrix-valued functions belonging to the Smirnov class, 
in: H.\ Dym, B.\ Fritzsche, V.\ Katsnelson, B.\ Kirstein (Eds.), 
Topics in Interpolation Theory (Leipzig, 1994), in: 
Operator Theory: Advances and Applications, Birkh\"auser, vol.\ 95, 1997, pp.\ 299--350.


\bibitem{KPP}
J.-P.\ Kreiss, E.\ Paparoditis, D.\ N.\ Politis, 
On the range of validity of the autoregressive sieve bootstrap, 
Ann.\ Statist.\ 39 (2011) 2103--2130.


\bibitem{M60}
P.\ Masani, 
The prediction theory of multivariate stochastic processes.\ III, 
Acta Math.\ 104 (1960) 141--162.


\bibitem{Pe}
V.\ V.\ Peller, 
Hankel Operators and Their Applications, 
Springer, 2003.


\bibitem{Pou}
M.\ Pourahmadi, 
Foundations of Time Series Analysis and Prediction Theory, 
Wiley, 2001.

\bibitem{TV1}
S.\ Treil, A.\ Volberg, 
Wavelets and the angle between past and future, 
J.\ Funct.\ Anal.\ 143 (1997) 269--308.

\bibitem{TV2}
S.\ Treil, A.\ Volberg, 
Completely regular multivariate stationary processes and the Muckenhoupt condition, 
Pacific J.\ Math.\ 190 (1999) 361--382.

\bibitem{XXCB}
Y.\ Xi, J.\ Xia, S.\ Cauley, V.\ Balakrishnan, 
Superfast and stable structured solvers for Toeplitz least squares via randomized sampling, 
SIAM J.\ Matrix Anal.\ Appl.\ 35 (2014) 44--72. 


\end{thebibliography}
\end{document}